\documentclass{amsart}

\usepackage{amsmath,amsthm,amsfonts}
\usepackage{mathtools}
\usepackage{mathrsfs}

\usepackage{tikz}
\usepackage{hyperref}

\usepackage{comment}

\usepackage[english]{babel}

\theoremstyle{plain}

\newtheorem{thm}{Theorem}

\newtheorem{prop}[thm]{{\bf Proposition}}
\newtheorem{lem}[thm]{{\bf Lemma}}

\newcounter{hyp_counter}
\setcounter{hyp_counter}{0}
\theoremstyle{definition}

\newtheorem{defn}[thm]{Definition}

\theoremstyle{remark}
\newtheorem{rem}{Remark}
\newtheorem{question}{Question}


\newcommand{\Id}{\operatorname{Id}}

\newcommand{\GL}{\operatorname{GL}}

\newcommand{\Gr}{\operatorname{Gr}}

\newcommand{\N}{\mathbb{N}}

\newcommand{\R}{\mathbb{R}}
\newcommand{\RP}{\mathbb{R}\operatorname{P}}
\newcommand{\SL}{\operatorname{SL}}

\newcommand{\supp}{\operatorname{supp}}

\newcommand{\Z}{\mathbb{Z}}

\newcommand{\wt}[1]{\widetilde{#1}}


\newcommand{\abs}[1]{\left| #1\right|}

\newcommand{\mc}[1]{\mathcal{#1}}


\makeatletter
\def\blfootnote{\xdef\@thefnmark{}\@footnotetext}
\makeatother

\title[Cocycles Measurably Conjugate to Unipotent]{Cocycles Measurably Conjugate to Unipotent Over Hyperbolic Systems}
\author{Jonathan DeWitt}
\address{Department of Mathematics, The University of Maryland, College Park, MD 20742, USA}
\email{dewitt@umd.edu}
\date{\today}

\begin{document}
\begin{abstract}
We show that if a H\"older continuous linear cocycle over a hyperbolic system is measurably conjugate to a cocycle taking values in a unipotent group, then the cocycle is H\"older continuously conjugate to a cocycle taking values in a unipotent group. More generally, we introduce some natural classes of matrices contained in $\GL(d,\R)$, which we call \emph{Zimmer blocks}. Examples of Zimmer blocks are unipotent and compact subgroups. We show that the same conclusion holds for Zimmer blocks.
\end{abstract}

\maketitle

\section{Introduction}

In this paper we prove a rigidity result for cocycles showing, for example, that if the measurable algebraic hull of a cocycle over a hyperbolic system is unipotent, then the cocycle itself is continuously conjugated to a unipotent cocycle.
\blfootnote{This material is based upon work supported by the National Science Foundation Graduate Research Fellowship under Grant No.~DGE-1746045.} 

We now develop the language to state our main cocycle rigidity theorem. We say that a set of matrices $H$ contained in $\GL(d,\R)$ is a \emph{Zimmer block} of exponent $\lambda$ if there exist compact linear groups $K_1,\ldots,K_k$ such that the matrices in $H$ are all matrices of the form:
\begin{equation}\label{eqn:example_block}
\begin{bmatrix}
e^{\lambda} M_1 & * & *\\
0 & \ddots & *\\
0 & 0 & e^{\lambda} M_k
\end{bmatrix}, 
\end{equation}
 where for each $1\le i\le k$ we have $M_i\in K_i$. In other words, a Zimmer block of exponent $\lambda$ is comprised of block upper triangular matrices where each block on the diagonal is of the form $e^\lambda K_i$ for some fixed $\lambda$ and fixed collection of compact matrix groups $K_i$.
 Zimmer's amenable reduction gives that any measurable linear cocycle over an ergodic measurable transformation is measurably conjugate to a cocycle taking values in a group that has a finite index subgroup whose elements are block upper triangular with the blocks on the diagonal each being scalar multiples of orthogonal matrices \cite[Thm. 9.2.3]{zimmer1983ergodic}.  However, in Zimmer's amenable reduction the diagonal blocks at a given point need not all be scaled by the same scalar, unlike the matrix in equation \eqref{eqn:example_block}.
 Examples of Zimmer blocks are compact and unipotent subgroups of $\GL(d,\R)$. Our main rigidity result is the following.

\begin{thm}\label{thm:Zimmer_block_rigidity}
Let $U_{\lambda}\subset \GL(d,\R)$ denote a Zimmer block of exponent $\lambda$.
Let $\sigma\colon \Sigma\to \Sigma$ be a transitive subshift of finite type equipped with a fully supported measure $\mu$ that has continuous product structure.
 Let $A\colon \Sigma\to \GL(d,\R)$ be a H\"older continuous cocycle.
 Suppose that there exists a $\mu$-measurable function $u\colon \Sigma\to \GL(d,\R)$ such that for $\mu$-almost every $x$,
\[
u(\sigma x)A(x)u(x)^{-1}\in U_{\lambda},
\]
then there exists a H\"older continuous function $u'\colon \Sigma\to \GL(d,\R)$ such that \[u'(\sigma x)A(x)u'(x)^{-1}\in U_{\lambda}.\]
\end{thm}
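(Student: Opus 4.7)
The starting observation is that every matrix in $U_\lambda$ has all eigenvalues of absolute value $e^\lambda$, since each diagonal block $e^\lambda M_i$ with $M_i$ in a compact group $K_i$ has unit modulus spectrum. Hence the measurable conjugacy forces the $\mu$-Lyapunov spectrum of $A$ to consist of the single value $\lambda$ (this needs a short argument since $u$ is only measurable, but it follows from Oseledets together with a Luzin-type argument on sets where $u$ is bounded, using the local product structure of $\mu$). Replacing $A$ by $e^{-\lambda}A$, we may reduce to the case $\lambda = 0$, where $U_0$ consists of block upper triangular matrices with compact-group-valued diagonal blocks and $A$ has vanishing Lyapunov spectrum.

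The plan is then to extract a H\"older invariant flag and conjugate the successive graded pieces into the compact groups $K_i$. The measurable conjugacy $u$ induces a measurable $A$-invariant section $x \mapsto \mc{F}(x)$ of the associated partial flag bundle, namely the pullback by $u(x)^{-1}$ of the standard flag preserved by $U_0$. Since $A$ has vanishing Lyapunov spectrum, a standard fiber bunching argument combined with the H\"older hypothesis yields H\"older stable and unstable holonomies for $A$ along local stable/unstable manifolds of $\sigma$. Exploiting the continuous product structure of $\mu$ one shows that $\mc{F}$ is $\mu$-a.e.\ invariant under these holonomies, and a Kalinin--Sadovskaya-style theorem (measurable sections of associated bundles that are holonomy invariant agree almost everywhere with H\"older sections) then upgrades $\mc{F}$ to a H\"older $A$-invariant flag $\wt{\mc{F}}$.

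Selecting H\"older frames adapted to $\wt{\mc{F}}$ puts $A$ in H\"older block upper triangular form. The induced H\"older quotient cocycles $\bar{A}_i$ have vanishing Lyapunov spectrum and are, through the diagonal blocks of $u$, measurably conjugate into the compact groups $K_i$. A Livsic-type theorem for compact-valued cocycles (in the style of Pollicott--Walkden or Sadovskaya) furnishes a H\"older conjugacy on each quotient into $K_i$; assembling these with the adapted frame produces a H\"older $u'\colon \Sigma \to \GL(d,\R)$ with $u'(\sigma x) A(x) u'(x)^{-1} \in U_0$, and thus in $U_\lambda$ after undoing the scalar normalization. The principal obstacle is the upgrade from measurable to H\"older for the invariant flag: verifying fiber bunching for the cocycles induced on the relevant Grassmannians is not automatic even when $A$ has vanishing Lyapunov spectrum, because the off-diagonal unipotent part of $U_0$ can cause polynomial growth in $\|A^n\|$, and one must carefully use the product structure of $\mu$ to pass from almost-everywhere holonomy invariance to genuine H\"older regularity. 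Once this step is achieved, the remainder of the argument reduces to the standard compact Livsic cohomology problem applied separately on each graded piece.
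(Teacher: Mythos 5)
Your overall outline matches the paper's strategy in broad strokes (normalize to $\lambda = 0$, show $\lambda_\pm(\mu) = 0$, get holonomy-invariant structures, upgrade to H\"older, handle the compact graded pieces by a Livsic argument). However, there is a genuine gap at exactly the step you defer, and your diagnosis of why it is hard points in the wrong direction.

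You write that ``a standard fiber bunching argument combined with the H\"older hypothesis yields H\"older stable and unstable holonomies for $A$.'' This is false as stated: vanishing Lyapunov spectrum for the single ergodic measure $\mu$ does not imply fiber bunching, which is a \emph{uniform} condition over $\Sigma$. The hypothesis of the theorem only controls the cocycle $\mu$-almost everywhere, and a priori there may be periodic points $p$ with $\lambda_+(p) > \lambda_-(p)$; near such orbits there are no holonomies, and the entire machinery breaks down. What one gets for free from $\lambda_\pm(\mu) = 0$ is only that the uniformity sets $\mc{D}(N,\theta)$ of Viana exhaust $\Sigma$ up to measure zero, so holonomies exist on those sets but not globally.

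The paper's main contribution is precisely the passage from this measurable fiber bunching to genuine (uniform) fiber bunching. This is done by proving that \emph{every} periodic orbit has $\lambda_+(p) = \lambda_-(p) = 0$ (Lemma \ref{lem:pps_have_0_exponent}), from which \cite[Prop.~4.11]{kalinin2013cocycles} yields uniform subexponential growth of the quasiconformal distortion, and then $\Sigma \subseteq \mc{D}(N,\theta)$ for some fixed $N,\theta$. Ruling out periodic points with nonzero exponents requires extending Butler's shadowing argument from the conformal setting to the block setting, which occupies Section \ref{sec:periodic_orbits} and is genuinely delicate: one produces periodic points $p^m$ shadowing first a zero-exponent periodic point $x$ and then the putative bad periodic point $y$, keeps the orbit of $p^m$ inside a single $\mc{D}(N,\theta)$, and then argues block by block (using cone estimates and projection estimates) that the return map of $p^m$ would have to exhibit transverse exponential growth incompatible with the isometric block structure. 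Your proposal acknowledges that ``fiber bunching for the cocycles induced on the relevant Grassmannians is not automatic,'' but attributes this to polynomial growth from the unipotent off-diagonal part and to an a.e.-vs-everywhere issue in holonomy invariance. Neither is the real obstruction: even a diagonal cocycle (no unipotent part at all) taking values $\mu$-a.e.\ in a compact group can have periodic points with nonzero exponents, and then fiber bunching fails. This is what must be ruled out, and your outline does not attempt it. Without that step, the claimed H\"older holonomies do not exist, and the remainder of your argument cannot get started.
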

We refer to the function $u$ appearing in the above theorem as a ``transfer function." Note that we do not conclude that $u'=u$ almost everywhere. In fact, we show in Subsection \ref{subsec:example} that we cannot conclude this because it need not be true.

The following finer result shows that if we have a measurable transfer function between two H\"older cocycles at least one of which takes values in a Zimmer block, then the transfer function coincides almost everywhere with a H\"older continuous transfer function.

\begin{thm}\label{thm:transfer_rigidity}
Let $U_{\lambda}\subset \GL(d,\R)$ denote a Zimmer block of exponent $\lambda$.
Let $\sigma\colon \Sigma\to \Sigma$ be a transitive subshift of finite type equipped with a fully supported measure $\mu$ that has continuous product structure.
Suppose that $A\colon \Sigma\to \GL(d,\R)$ is a H\"older continuous cocycle and that $B\colon \Sigma\to U_{\lambda}$ is also H\"older continuous.
Suppose that there exists a measurable function $u\colon \Sigma\to \GL(d,\R)$ such that for $\mu$-almost every $x$,
\[
u(\sigma x)A(x)u(x)^{-1}=B(x).
\]
Then there exists a H\"older continuous function $u'\colon \Sigma\to \GL(d,\R)$ such that \[u'(\sigma x)A(x)u'(x)^{-1}=B(x),\] and further $u=u'$ $\mu$-a.e.
\end{thm}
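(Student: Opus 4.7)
The plan is to apply Theorem \ref{thm:Zimmer_block_rigidity} as a black box to reduce to a rigidity statement for conjugacies between two H\"older continuous $U_\lambda$-valued cocycles, and then to establish that statement via a Livsic-type argument that exploits the growth structure of Zimmer blocks.

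First, Theorem \ref{thm:Zimmer_block_rigidity} applied to $A$, with the given measurable $u$ as witness, produces a H\"older continuous $u_0 \colon \Sigma \to \GL(d,\R)$ and a H\"older continuous cocycle $B_0 \colon \Sigma \to U_\lambda$ with
\[
u_0(\sigma x) A(x) u_0(x)^{-1} = B_0(x).
\]
Setting $h(x) := u(x) u_0(x)^{-1}$, a direct computation shows that $h$ is measurable and satisfies
\[
h(\sigma x) B_0(x) h(x)^{-1} = B(x) \quad\text{for } \mu\text{-a.e.\ } x.
\]
If we can exhibit a H\"older continuous $h'$ with $h' = h$ almost everywhere, then $u' := h' \cdot u_0$ is H\"older continuous, agrees with $u$ almost everywhere, and satisfies $u'(\sigma x) A(x) u'(x)^{-1} = B(x)$. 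The problem thus reduces to the following rigidity claim: any measurable transfer function between two H\"older continuous $U_\lambda$-valued cocycles coincides almost everywhere with a H\"older continuous transfer function.

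To establish the rigidity claim I would run a nonabelian Livsic scheme. Rewriting the conjugacy equation as $h(\sigma x) = B(x) h(x) B_0(x)^{-1}$, one views $h$ as a measurable section invariant under the linear cocycle $g \mapsto B(x) g B_0(x)^{-1}$ on $M_d(\R)$. Because both $B$ and $B_0$ take values in the same Zimmer block $U_\lambda$, the common scalar $e^{n\lambda}$ carried by the diagonal blocks of $B^n$ and $B_0^n$ cancels in this conjugation cocycle, while the compact diagonal parts are uniformly bounded and the strictly upper-triangular parts contribute only polynomial growth. This yields a uniform polynomial bound on the norm of $g \mapsto B^n(x) g B_0^n(x)^{-1}$, putting the situation in a fiber-bunched regime. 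One then chooses via Lusin a compact set $K$ of large measure on which $h$ is uniformly continuous; by Birkhoff a full-measure set of points return to $K$ with positive asymptotic frequency in both forward and backward time. For two such points $x, y$ lying on a common local stable manifold, the H\"older continuity of $B$ and $B_0$, the exponential stable contraction of $\sigma$, and the polynomial growth of the conjugation cocycle combine to force $h(x) = h(y)$. The symmetric argument on unstable leaves together with the continuous product structure of $\mu$ promotes $h$ to a H\"older continuous function $h'$ defined on all of $\Sigma$.

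The main obstacle is precisely the growth estimate on the conjugation cocycle; a generic H\"older $\GL(d,\R)$-valued cocycle produces exponential error terms that swamp the stable contraction and kill the Livsic step. The Zimmer block hypothesis forces the diagonal scalar $e^{n\lambda}$ to be identical in $B^n(x)$ and $B_0^n(x)$, producing the essential cancellation that leaves only polynomial growth. This structural cancellation is the technical heart of the theorem and also explains the gap between Theorems \ref{thm:Zimmer_block_rigidity} and \ref{thm:transfer_rigidity}: the former is free to modify $B$ during the H\"older approximation, whereas the latter pins $B$ down and so extracts the much stronger a.e.\ uniqueness conclusion.
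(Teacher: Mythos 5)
Your first step is exactly the paper's: apply Theorem \ref{thm:Zimmer_block_rigidity} to get a H\"older $u_0$ with $B_0:=u_0(\sigma x)A(x)u_0(x)^{-1}\in U_\lambda$, set $h=uu_0^{-1}$, and reduce to upgrading a measurable conjugacy between two $U_\lambda$-valued H\"older cocycles. But the second half has two genuine gaps. First, using Theorem \ref{thm:Zimmer_block_rigidity} as a black box you only know that $h$ is some measurable $\GL(d,\R)$-valued conjugacy between $B_0$ and $B$; you do not know that $h$ takes values in $U_\lambda$ (equivalently, that it carries the standard flag and block metrics of $U_\lambda$ to themselves). The paper's Proposition \ref{prop:mbl_rigidity_in_a_block} assumes precisely this, and the paper's proof of Theorem \ref{thm:transfer_rigidity} spends real effort arranging it: the H\"older conjugacy is built from the continuous versions of the measurable flag $u^{-1}\mc{E}_i^B$ and metrics, so that the residual conjugacy preserves $U_\lambda$'s flag and acts orthogonally on the diagonal blocks. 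That orthogonality is what allows the diagonal blocks of $h$ to be handled by the Pollicott--Walkden compact-group result; without it you are proving a strictly stronger statement than the paper does, with no argument for the diagonal part.

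Second, the Liv\v{s}ic step as you describe it does not close. Polynomial growth of the conjugation cocycle $g\mapsto B^n(x)gB_0^n(x)^{-1}$ does give fiber bunching and hence holonomies, but when you compare $h(x')=\mathcal{C}^n(x')^{-1}h(\sigma^n x')$ with $\mathcal{C}^n(x')^{-1}\mathcal{C}^n(x)h(x)$ along returns to a Lusin set, the error term is $\mathcal{C}^n(x')^{-1}\bigl(h(\sigma^n x')-h(\sigma^n x)\bigr)$: a polynomially growing operator applied to a quantity controlled only by the \emph{qualitative} modulus of continuity of $h$ on the Lusin set. The product need not tend to zero, so you cannot conclude holonomy invariance (and the conclusion should in any case be $h(y)=H^s_{xy}h(x)(H^{s}_{xy})^{-1}$-type holonomy invariance, not $h(x)=h(y)$). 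The paper's Lemma \ref{lem:rigidity_for_2_by_2} and the superdiagonal induction in Proposition \ref{prop:mbl_rigidity_in_a_block} are designed exactly to evade this: at each inductive step the entry of $h$ being upgraded is conjugated only by the isometric diagonal blocks (so no amplification, cf.\ the third term of \eqref{eqn:upper_right_corner_complicated}), while the polynomially growing off-diagonal contributions of $B^n$ and $B_0^n$ appear identically on both sides of \eqref{eqn:upper_right_corner_complicated} and \eqref{eqn:upper_right_corner_simple} and cancel rather than being estimated. Your "structural cancellation of $e^{n\lambda}$" is necessary but not sufficient; the block-by-block cancellation is the actual technical heart, and your one-shot argument has no substitute for it.
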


The ideas in the proofs of Theorem \ref{thm:Zimmer_block_rigidity} and \ref{thm:transfer_rigidity}
 draw on ideas of Butler appearing in \cite{butler2018measurable} and the approach in \cite{pollicott2001livsic}, respectively. The most significant novelty in this work is our extension of Butler's shadowing argument beyond the conformal setting, which occupies the entirety of Section \ref{sec:periodic_orbits}.

Besides their independent interest as rigidity results, rigidity results for cocycles are useful when studying rigidity of smooth systems.
For example, rigidity of transfer functions plays a role in studying conjugacies between smooth hyperbolic systems. If one has a Lipschitz conjugacy between two smooth hyperbolic systems, then its derivative is a measurable transfer function between the derivative cocycles of those two systems. Hence if one can upgrade the regularity of the transfer function, one can also upgrade the regularity of the conjugacy. For a specific instance of this, see \cite[Sec.~3.2]{gogolev2020local}.
Zimmer's measurable amenable reduction is also used in studying rigidity problems.
For example, in recent work of Damjanovi\'{c} and Xu on rigidity of higher rank Anosov $\Z^k$-actions, those authors use Zimmer's amenable reduction for $\Z^k$-actions and then upgrade the measurable structures they obtain to H\"older continuous ones \cite[Sec.~6.6]{damjanovic2020classification}.

\subsection{Related work on cocycles}

In \cite{butler2018measurable}, Butler draws attention to the following important question:
\begin{question}\label{question:cohomology}
Suppose that $A$ and $B$ are measurably cohomologous. Are they then continuously cohomologous?
\end{question}

Butler shows that this is true when $B=\Id$, improving on earlier work of Pollicott and Walkden \cite{pollicott2001livsic}, which contains a fiber bunching assumption. Sadovskaya \cite{sadovskaya2015cohomology} later showed that if $A$ is fiber bunched and $B$ is uniformly quasiconformal, then a measurable conjugacy between the two is continuous. A further measurable rigidity result was recently obtained by Kalinin, Sadovskaya, and Wang \cite[Thm.~2.2]{kalinin2022local} for conjugacies between a constant cocycle and its perturbation.

Although the question is well understood in the case where one of the cocycles is trivial, understanding the general picture will require a theory that has yet to be developed.
 For example, in \cite[Sec. 9]{pollicott2001livsic} Pollicott and Walkden exhibit a cocycle near the identity of the form 
\[
\begin{bmatrix}
f_1 & f_2\\
0 & 1
\end{bmatrix},
\]
which is measurably but not continuously cohomologous to 
\[
\begin{bmatrix}
f_1 & 0 \\
0 & 1
\end{bmatrix}.
\]
The obstruction to continuous conjugacy is that the former cocycle may have a Jordan block in its periodic data.  
Note that both of the above cocycles are fiber bunched, hence knowing that $A$ and $B$ are close and fiber bunched is insufficient to upgrade the regularity of a measurable conjugacy.
Pollicott and Walkden's example differs substantially from the setting considered in this paper because their cocycle has two distinct Lyapunov exponents.  
Thus it seems that a more subtle theory is needed to fully address this question. 

The proof of Theorems \ref{thm:Zimmer_block_rigidity} draws heavily on ideas of Butler in \cite{butler2018measurable}.
 That paper shows, in the same setting as considered in this paper, that if a cocycle preserves a measurable conformal structure, then it preserves a continuous conformal structure.

Related to Question \ref{question:cohomology} is another.
\begin{question}\label{question:algebraic_hull}
Suppose that $G\subset \GL(n,\R)$ is a closed subgroup. When does it follow that if a $\GL(n,\R)$-cocycle is measurably conjugated into $G$, then it is continuously conjugated into $G$?
\end{question}

The main result we are familiar with in this direction is \cite[Thm. 3.4]{kalinin2013cocycles}, which shows a continuous amenable reduction for fiber bunched cocycles with all Lyapunov exponents zero.
We are curious to know if their fiber bunching assumption can be removed.

\subsection{An example}\label{subsec:example}

A useful example to keep in mind throughout this paper is the following. Consider a constant cocycle given by 
\[
\begin{bmatrix}
1 & 1 \\
0 & 1
\end{bmatrix}.
\]
We have written this matrix with respect to the usual coordinate framing $[e_1,e_2]$ of $\R^2$. If this is a cocycle over a shift, $\sigma\colon \Sigma\to \Sigma$, then for any measurable function $\phi\colon \Sigma\to \R$, we may consider this cocycle with respect to the framing $[e_1,\phi(x)e_1+e_2]$. With respect to this framing, the cocycle is:
\[
\begin{bmatrix}
1 & 1-\phi(\sigma(x))+\phi(x)\\
0 & 1
\end{bmatrix}.
\]
This new cocycle also takes values in a unipotent subgroup. However, this example shows that even in this simple case we cannot hope that a measurable framing would coincide with a continuous framing unless other requirements are satisfied. Thus we cannot have that $u'=u$ a.e.~in the conclusion of Theorem \ref{thm:Zimmer_block_rigidity}.

\noindent{\textbf{Acknowledgements}:} The author is grateful to Boris Kalinin and Daniel Mitsutani their helpful comments on an earlier version of this paper. The author is also grateful to Jairo Bochi, Aaron Brown, Amie Wilkinson, and Federico Rodriguez Hertz for helpful discussions.

\section{Setting}

Fix some number $\ell\ge 2$, which will be the number of symbols used to define the shift. Let $Q=(q_{i,j})_{1\le i,j\le \ell}$ be an irreducible matrix with each $q_{ij}\in \{0,1\}$. Let $\Sigma$ be the associated two-sided subshift of finite type. Namely, 
\[
\Sigma\coloneqq \{(x_n)_{n\in \Z}: q_{x_n,x_{n+1}}=1\text{ for all } n\in \Z\}\subset \{1,\ldots,\ell\}^\Z.
\]
Let $\sigma\colon \Sigma\to \Sigma$ be the left shift, i.e. $\sigma((x_n)_{n\in \Z})=(x_{n+1})_{n\in \Z}$. Also, for each $\tau>0$, we naturally equip $\Sigma$ with the metric 
\[
\rho_{\tau}(x,y)=e^{-\tau N(x,y)},
\]
where 
\[
N(x,y)=\max\{N\ge 0; x_n=y_n\text{ for all } \abs{n}<N\}.
\]

\begin{defn}
Let $A\colon \Sigma\to \GL(d,\R)$ be a Borel measurable map. The \emph{linear cocycle} over $f$ generated by $A$ is the map
\begin{align*}
\mc{A}&\colon \Sigma\times \Z\to \GL(d,\R),\\
\mc{A}(x,n)&=\begin{cases}
A(\sigma^{n-1}(x))\cdots A(\sigma(x))A(x)& n>0\\
\Id & n=0\\
A(\sigma^{-\abs{n}}(x))^{-1}\cdots A(\sigma^{-1}(x))^{-1}& n<0.
\end{cases}
\end{align*}
\end{defn}

We say that $A$ generates the associated cocycle $\mc{A}$. 
We write $A^n(x)$ for $\mc{A}(x,n)$. 
We say that a cocycle $\mc{A}$ is continuous if it has a continuous generator $A$.
Similarly we define $\mc{A}$ to be $\alpha$-H\"older continuous if it is generated by an $\alpha$-H\"older continuous $A$ with respect to the metric $\rho_{\tau}$ for some choice of $\tau>0$. We say that $\mc{A}$ is H\"older if this is true for some $\alpha>0$ and $\tau>0$. In a standard abuse, we also refer to the generator $A$ as a ``cocycle."

We now define the notions of cohomologous cocycles used in this paper.

\begin{defn}
Let $\Sigma$ be a subshift of finite type endowed with an invariant probability $\mu$. We say that two cocycles $A,B\colon \Sigma\to \GL(d,\R)$ are $\mu$-measurably cohomologous if there is a $\mu$-measurable function $P\colon \Sigma\to \GL(d,\R)$ such that 
\[
A(x)=P(\sigma(x))B(x)P(x)^{-1},
\]
for $\mu$-a.e. $x\in \Sigma$.
\end{defn}

\begin{defn}
Let $\Sigma$ be a subshift of finite type. We say that two cocycles $A,B\colon \Sigma\to \GL(d,\R)$ are continuously cohomologous if there is a continuous function $P\colon \Sigma\to \GL(d,\R)$ such that 
\[
A(x)=P(\sigma(x))B(x)P(x)^{-1},
\]
for all $x\in \Sigma$. Similarly, we say that $A$ and $B$ are H\"older cohomologous if $P$ may be taken to be H\"older continuous.
\end{defn}

We now develop the language to define local product structure for a measure. 
For $x\in \Sigma$, we define the local stable set of $x$ to be
\[
W^s_{loc}(x)\coloneqq \{(y_n)_{n\in \Z}\in \Sigma: x_n=y_n\text{ for all } n\ge 0\},
\]
and the \emph{local unstable} set by
\[
W^u_{loc}(x)\coloneqq \{(y_n)_{n\in \Z}\in \Sigma: x_n=y_n\text{ for all } n\le 0\}.
\]

For two points $x,y$ with $x_0=y_0$, we define $[x,y]$ to be the unique point in $W^u_{loc}(x)\cap W^s_{loc}(y)$, which is the point
\[
(\ldots,x_{-2},x_{-1},y_0,y_1,y_2,\ldots).
\]

Let
\begin{align*}
\Sigma^u&=\{(x_n)_{n\ge 0}: q_{x_nx_{n+1}}=1 \text{ for all } n\ge 0\}, \text{ and}\\
\Sigma^s&=\{(x_n)_{n<0}: q_{x_nx_{n+1}}=1\text{ for all } n\le -2\}.
\end{align*}

We denote by $\pi_u$ and $\pi_s$ the obvious projections of $\Sigma$ onto $\Sigma^u$ and $\Sigma^s$, respectively. We also use the usual notation for cylinders:
\[
[m; a_0,a_1,\ldots, a_k]=\{x\in \Sigma: x_{m+i}=a_{m+i}, 0\le i\le k\}.
\]

Let $\mu$ be a fully supported ergodic $\sigma$-invariant probability measure on $\Sigma$. 
We let $\mu^u=(\pi_u)_*\mu$ and $\mu^s=(\pi_s)_*\mu$. We denote by $\mu\vert E$ the restriction of $\mu$ to a set $E$. 

For each $1\le i \le \ell$, we have a natural map $[0;i]\to \pi_s([0;i])\times \pi_u([0;i])$ given by $\pi_s\times \pi_u$. We say that $\mu$ has \emph{continuous local product structure} if there is a continuous function $\xi\colon \Sigma\to (0,\infty)$ such that for each $i$:
\[
\mu\vert_{[0;i]}=\xi\cdot (\mu^s\vert\pi_s([0;i])\times \mu^u\vert \pi_u([0;i])).
\]
It is helpful to note that a measure with local product structure is locally ergodic. Hence as the shifts we consider are transitive, we see that the measures we consider are ergodic.

For an ergodic invariant measure $\mu$, we define the \emph{extremal Lyapunov exponents} of $\mc{A}$ with respect to $\mu$ to be
\begin{align*}
\lambda_{+}(\mu)&\coloneqq \inf_{n\ge 1}\frac{1}{n}\int_{\Sigma} \log \|A^n\|\,d\mu,\\
\lambda_{-}(\mu)&\coloneqq \sup_{n\ge 1}\frac{1}{n}\int_{\Sigma}\log \|A^{-n}\|^{-1}\,d\mu.
\end{align*}
We denote the extremal Lyapunov exponents of the uniform  measure supported on the orbit of a periodic point $p$ by $\lambda_+(p)$ and $\lambda_{-}(p)$. A useful fact will be the following.

\begin{prop}\label{prop:structure_implies_zero_exponents}
Let $\Sigma$ be a subshift of finite type and let $\mu$ be an ergodic measure on $\Sigma$. 
Suppose there is a measurable function $\alpha\colon \Sigma\to \SL(d,\R)$ such that after conjugating by $\alpha$, $A$ takes values in a Zimmer block with exponent $0$. 
Then $\lambda_+(\mu)=-\lambda_{-}(\mu)=0$.
\end{prop}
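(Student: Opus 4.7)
My plan is to exploit the invariant flag structure that $B$ carries in a fixed basis, pull it back to a measurable invariant flag for $A$, and argue that $A$ acts as a bounded cocycle on each graded piece. First I would reduce, by a constant linear change of basis $g = \diag(g_1,\ldots,g_k)$ chosen so that $g_i K_i g_i^{-1} \subset O(d_i)$, to the case where each $K_i$ is a compact subgroup of $O(d_i)$; this conjugation preserves $\lambda_\pm(\mu)$. The cocycle $B(x) \coloneqq \alpha(\sigma x)A(x)\alpha(x)^{-1} \in U_0$ is then block upper triangular in the standard basis and preserves the fixed flag $W_1 \subset \cdots \subset W_k = \R^d$, with $W_i = \R^{d_1+\cdots+d_i} \oplus \{0\}$; the induced action on $W_i/W_{i-1}$ is the cocycle $M_i(x) \in K_i \subset O(d_i)$. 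Consequently $A$ preserves the measurable flag $F_i(x) \coloneqq \alpha(x)^{-1} W_i$, and on each graded bundle $F_i/F_{i-1}$ the induced cocycle $A_i$ is measurably conjugate to $M_i$ via the quotient map $\alpha_i(x)$ induced by $\alpha(x)$.

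Next I would show that each $A_i$ has all Lyapunov exponents equal to zero. The quotient norm satisfies $\|A_i^{\pm 1}(x)\| \le \|A^{\pm 1}(x)\|$, so Oseledets' theorem applies to $A_i$. For $v$ in the fiber over $x$, write $A_i^n v = \alpha_i(\sigma^n x)^{-1} M_i^n(x) \alpha_i(x) v$; using $\|M_i^n\| \le 1$ from orthogonality, this gives $\|A_i^n v\| \le \|\alpha_i(\sigma^n x)^{-1}\| \cdot \|\alpha_i(x) v\|$. Since $\alpha_i$ is measurable and a.e.\ finite, for any $\epsilon > 0$ Lusin's theorem provides a set $E_\epsilon$ with $\mu(E_\epsilon) > 1 - \epsilon$ on which $\|\alpha_i^{-1}\|$ is bounded, and Birkhoff gives a subsequence $n_k$ of density at least $1 - \epsilon$ along which $\sigma^{n_k} x \in E_\epsilon$. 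Along $n_k$ the quantity $\frac{1}{n_k} \log \|A_i^{n_k} v\|$ tends to a nonpositive number; but by Oseledets the whole sequence converges to the Lyapunov exponent of $v$ under $A_i$, which must therefore be at most $0$. The symmetric argument applied to $A_i^{-1}$ gives the reverse inequality, so every Lyapunov exponent of $A_i$ is zero.

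To conclude, I would invoke the standard consequence of Oseledets' theorem that, for a cocycle preserving a measurable invariant flag, the Lyapunov spectrum equals the multiset union of the spectra of the induced cocycles on the graded pieces. Since each graded spectrum is identically zero, all Lyapunov exponents of $A$ are zero, so $\lambda_+(\mu) = -\lambda_-(\mu) = 0$. The main obstacle is that the transfer function $\alpha$ is only measurable, so one cannot directly transport Lyapunov exponents across it; the Lusin--Birkhoff argument in the second paragraph circumvents this using nothing more than that $\alpha$ is a.e.\ finite together with the uniform bound $\|M_i^n\| \le 1$ coming from compactness of $K_i$.
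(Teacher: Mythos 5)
Your argument is correct, but it takes a genuinely different route from the paper's. The paper proves the proposition by a direct sojourn-time estimate: it fixes a Lusin set for the measurable framing, tracks how long the orbit spends inside versus outside that set, and bounds the growth of the off-diagonal coefficient of $A^N(x)e_2$ by (polynomial in the inside-time) times (exponential in the outside-time), concluding that the top exponent is at most $C\beta\epsilon$ and letting $\epsilon\to 0$; moreover it carries this out only for the standard unipotent subgroup of $\SL(2,\R)$ and asserts that ``a more complicated variant'' works in general. You instead (i) pull back the standard flag to the measurable $A$-invariant flag $F_i(x)=\alpha(x)^{-1}W_i$, (ii) kill the exponents of each graded cocycle $A_i$ by a recurrence argument — since $A_i$ is measurably conjugate to an orthogonal cocycle, $\|A_i^n v\|$ is bounded along the return times of $\sigma^n x$ to a set where $\|\alpha_i^{-1}\|$ is bounded, and the existence of the full Oseledets limit then forces the exponent to be $\le 0$, with the inverse cocycle giving $\ge 0$ — and (iii) invoke the standard fact that for an integrable cocycle preserving a measurable flag, the Lyapunov spectrum is the multiset union of the graded spectra. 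Step (iii) is the load-bearing citation and it does apply here: that theorem requires integrability only of the ambient cocycle in the fixed norm (automatic since $A$ is continuous on a compact space), not of any block decomposition adapted to the merely measurable flag, which is precisely where a naive triangular computation would break down. The trade-off is that the paper's argument is self-contained but only sketched beyond dimension two, whereas yours is complete in all dimensions and for all Zimmer blocks at the cost of outsourcing the combination of graded spectra to a (genuine, e.g. Arnold, \emph{Random Dynamical Systems}, Thm.~4.2.6) black box.
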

The proof of Proposition \ref{prop:structure_implies_zero_exponents} is similar to the proof of \cite[Prop. 4.2]{butler2018measurable} and is based on studying the returns to a Lusin set.

\begin{proof}
We give the proof in the case that the block is the standard upper triangular unipotent subgroup of $\SL(2,\R)$. Let $[e_1,e_2]$ denote the measurable framing that gives the cocycle its block structure. Fix a Lusin set $K$ for this framing so that $\mu(K)=1-\epsilon$. Pick a Lyapunov regular point $x\in K$. Let $\sigma_1,\tau_1,\sigma_2,\tau_2,\ldots$ be the sequence of sojourn times of $x$ in $K$, then in $\Sigma\setminus K$, then in $K$, and so on. We assume that $x$ equidistributes between $K$ and $\Sigma\setminus K$. Note that on $K$ there is a uniformly continuous function $a(x)$ such that with respect to the framing $[e_1,e_2]$ we have 
\[
A(x)=\begin{bmatrix}
1 & a(x)\\
0 & 1.
\end{bmatrix}
\]
In particular we see that 
\[
A^{\sigma_1}(x)e_1=\sum_{i=0}^{\sigma_1-1}a(\sigma^i(x))e_1+e_2.
\]
Note that there exists an increasing polynomial $p(n)$ such that for any $y\in K$, if $y$ stays in $K$ for $n$ iterates $\abs{\sum_{i=0}^{n-1}a(\sigma^i(y))}\le p(n)$.

After $\sigma_1$ iterates, $x$ leaves $K$. As the measurable block structure extends outside of $K$, we see that $A^{\tau_1}(\sigma^{\sigma_1}(x))e_1=e_1$, and we can estimate the norm of $A^{\tau_1}(\sigma^{\sigma_1}(x))$ by using merely that the cocycle is continuous, i.e. $\|A\|\le e^{\beta}$ for some $\beta>0$. Thus we see that for any $y$,
\[
\|A^{\tau_1}(y)e_1\|\le e^{\beta \tau_1}.
\]
So, if we write $A^{\sigma_1+\tau_1}(x)e_2=c(1)e_1+d(1)e_2$, then $\abs{c(1)}\le p(\sigma_1)e^{\tau_1\beta}$. If we let $c(n),d(n)$ be the corresponding coefficients for $A^{\sum_{i=1}^n \sigma_i+\tau_i}(x)e_2$, then by the upper triangular structure $d(n)=1$ and 
\[
\abs{c(n)}\le \sum_{i=1}^n p(\sigma_i)e^{\beta(\tau_i+\cdots+\tau_n)}.
\]
If we let $N=\sum_{i=1}^n \sigma_i+\tau_i$, then the above line is bounded by $Np(N)e^{\beta(\tau_1+\cdots+\tau_n)}$.

But we can now estimate the Lyapunov exponent associated to the vector $e_2$, which needs to equal $\lambda_+(\mu)>0$ if the cocycle has a non-zero Lyapunov exponent. Now,
\[
\lambda_+(\mu)=\limsup_{N\to \infty}\frac{1}{N}\log \|A^N(x)e_2\|\le C\beta\epsilon.
\]
But this holds for each $\epsilon$ as we can always choose a larger Lusin set. Thus we are done.

A more complicated variant on this idea works in higher dimensions.
\end{proof}

\section{Preliminaries on Cocycles}\label{sec:preliminaries}

In the course of the proof, we will work with certain uniformity sets for the growth of the angle distortion of the cocycle. Let $\mc{D}(N,\theta)$ denote the set of points $x\in \Sigma$ such that for all $s\ge 1$,
\[
\prod_{j=0}^{s-1} \|A^N(f^{jN}x))\|\|A^N(f^{jN}(x))^{-1}\|\le e^{sN\theta} 
\]
and
\[
\prod_{j=0}^{s-1}\|A^{-N}(f^{-jN}(x))\|\|A^{-N}(f^{-jN}(x))^{-1}\|\le e^{sN\theta}.
\]
These sets are useful because they measurably exhaust $\Sigma$.

\begin{lem}\label{lem:blocks_exhaust}
\cite[Cor. 2.4]{viana2008almost} Suppose that $\lambda^+(\mu)=\lambda^-(\mu)=0$. Then $\mu$-a.e.~$x\in \Sigma$ is in $\mc{D}(N,\theta)$ for some $N>0$ and $0<\theta<\tau$.
\end{lem}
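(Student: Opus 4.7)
The plan is to combine Kingman's subadditive ergodic theorem with the Hardy-Littlewood maximal inequality for ergodic averages. Introduce the continuous, nonnegative functions
\[
\phi_N(x) \coloneqq \log\bigl(\|A^N(x)\|\cdot\|A^N(x)^{-1}\|\bigr), \qquad \psi_N(x) \coloneqq \log\bigl(\|A^{-N}(x)\|\cdot\|A^{-N}(x)^{-1}\|\bigr),
\]
which are nonnegative because $\|M\|\cdot\|M^{-1}\|\ge 1$ for every $M\in\GL(d,\R)$. In logarithmic form, membership in $\mc{D}(N,\theta)$ is equivalent to the two Birkhoff-type bounds $\tfrac{1}{s}\sum_{j=0}^{s-1}\phi_N(\sigma^{jN}x)\le N\theta$ and $\tfrac{1}{s}\sum_{j=0}^{s-1}\psi_N(\sigma^{-jN}x)\le N\theta$ holding for every $s\ge 1$.

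First I would check that $\tfrac{1}{N}\int\phi_N\,d\mu\to 0$ and $\tfrac{1}{N}\int\psi_N\,d\mu\to 0$ as $N\to\infty$. Applying Kingman to the subadditive sequences $n\mapsto\log\|A^n\|$ (over $\sigma$) and $n\mapsto\log\|A^{-n}\|$ (over $\sigma^{-1}$), the hypothesis $\lambda^+(\mu)=\lambda^-(\mu)=0$ forces both $\tfrac{1}{N}\int\log\|A^N\|\,d\mu$ and $\tfrac{1}{N}\int\log\|A^{-N}\|\,d\mu$ to tend to zero. The identity $(A^N(x))^{-1}=A^{-N}(\sigma^N x)$ combined with $\sigma$-invariance of $\mu$ converts $\int\log\|A^N(x)^{-1}\|\,d\mu$ into $\int\log\|A^{-N}\|\,d\mu$, and a symmetric computation handles $\psi_N$.

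For fixed $N$, the transformations $\sigma^N$ and $\sigma^{-N}$ preserve $\mu$, so the Hardy-Littlewood maximal inequality applied to the nonnegative $\phi_N$ yields
\[
\mu\Bigl\{x : \sup_{s\ge 1}\tfrac{1}{s}\sum_{j=0}^{s-1}\phi_N(\sigma^{jN}x) > N\theta\Bigr\} \le \frac{1}{N\theta}\int \phi_N\,d\mu,
\]
and likewise for $\psi_N$ under $\sigma^{-N}$. Fix any $\theta\in(0,\tau)$. By the previous step, I can select an increasing sequence $N_k\to\infty$ along which both $\tfrac{1}{N_k}\int\phi_{N_k}\,d\mu$ and $\tfrac{1}{N_k}\int\psi_{N_k}\,d\mu$ lie below $\theta\cdot 2^{-k-1}$, so that $\mu(\Sigma\setminus \mc{D}(N_k,\theta))\le 2^{-k}$. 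Borel-Cantelli then gives that $\mu$-a.e.~$x$ lies in $\mc{D}(N_k,\theta)$ for all but finitely many $k$, and in particular in some $\mc{D}(N,\theta)$ with $\theta\in(0,\tau)$.

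The only potential subtlety is that $\mc{D}(N,\theta)$ demands the averages to be controlled for \emph{every} $s\ge 1$, including the small-$s$ regime not handled by a direct appeal to Birkhoff; this is exactly what the maximal inequality supplies, so no further work beyond the nonnegativity of $\phi_N,\psi_N$ is required.
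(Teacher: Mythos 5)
Your argument is correct: the nonnegativity of $\phi_N,\psi_N$, the identity $(A^N(x))^{-1}=A^{-N}(\sigma^N x)$ together with Kingman/Fekete to get $\tfrac{1}{N}\int\phi_N\,d\mu\to 0$, and the maximal ergodic (Wiener) inequality for $\sigma^{\pm N}$ followed by Borel--Cantelli assemble into a complete proof, and you correctly identified that the ``for every $s\ge 1$'' requirement is exactly what the maximal inequality (rather than Birkhoff alone) delivers. The paper itself gives no proof --- it cites Viana's Corollary 2.4 --- and your argument is essentially the standard one underlying that citation, so there is nothing to add.
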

An important property of the sets $\mc{D}(N,\theta)$ is that the holonomies are uniformly continuous on them.
\begin{prop}\label{prop:viana}
\cite[Prop.~2.5, Cor.~2.8]{viana2008almost} Given $N,\theta$ with $\theta<\tau$, there exists $L=L(N,\theta)>0$ such that for any $x\in \mc{D}(N,\theta)$ and any $y,z\in W^s_{loc}(x)$, 
\[
H^s_{yz}=\lim_{n\to \infty} A^n(z)^{-1}A^n(y)
\]
exists and satisfies $\|H^s_{yz}-\Id\|\le L\cdot \rho(y,z)$ and $H^s_{yz}=H^s_{xz}\circ H^s_{yx}$. Further, $H_{xy}^s=A^{-n}(f^n(y))H_{f^n(x)f^n(y)}^sA^n(x)$. The analogous statement holds for unstable holonomies. 
\end{prop}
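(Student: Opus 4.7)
The plan is to show that for any $y,z\in W^s_{loc}(x)$ with $x\in\mc{D}(N,\theta)$, the sequence
\[
h_n(y,z)\coloneqq A^n(z)^{-1}A^n(y)
\]
is Cauchy in $\GL(d,\R)$ with a geometric rate, and that this rate yields the Lipschitz bound $\|H^s_{yz}-\Id\|\le L\rho(y,z)$. After replacing the metric $\rho_\tau$ by $\rho_{\alpha\tau}$, I may assume $A$ is Lipschitz with constant $C_A$ (and $\|A\|_\infty,\|A^{-1}\|_\infty\le M$); the hypothesis $\theta<\tau$ then provides the needed spectral gap.

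The core telescoping identity is
\[
h_{n+1}-h_n=A^n(z)^{-1}\bigl[A(\sigma^n z)^{-1}A(\sigma^n y)-\Id\bigr]A^n(y).
\]
Since $y,z\in W^s_{loc}(x)$ agree in all nonnegative coordinates, a direct count gives $\rho_\tau(\sigma^n y,\sigma^n z)\le e^{-\tau n}\rho_\tau(y,z)$, so by Lipschitz continuity of $A$ and the uniform bound $\|A^{-1}\|_\infty\le M$,
\[
\bigl\|A(\sigma^n z)^{-1}A(\sigma^n y)-\Id\bigr\|\le MC_A\, e^{-\tau n}\rho_\tau(y,z).
\]

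The main technical step, and the one I expect to require the most care, is to propagate the norm control that defines $\mc{D}(N,\theta)$ from the anchor $x$ to the nearby points $y,z$. Write $n=sN+r$ with $0\le r<N$. For each $j$, $\sigma^{jN}y$ lies within $\rho_\tau$-distance $e^{-jN\tau}\rho_\tau(y,x)$ of $\sigma^{jN}x$, so Lipschitz continuity of the iterate $A^N$ gives
\[
A^N(\sigma^{jN}y)=A^N(\sigma^{jN}x)(\Id+E_j^y),\qquad \|E_j^y\|\le C\,e^{-jN\tau},
\]
where the uniform lower bound $\|A^N(\sigma^{jN}x)\|\ge\|A^N(\sigma^{jN}x)^{-1}\|^{-1}\ge M^{-N}$ absorbs the error into a multiplicative constant independently of $x$. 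The convergent product $\prod_j(1+\|E_j^y\|)$ is bounded by a universal $C_0=C_0(N,\tau)$, and the analogous estimate applied to $A^N(\sigma^{jN}z)^{-1}$ combines with the defining inequality of $\mc{D}(N,\theta)$ to yield
\[
\|A^n(y)\|\,\|A^n(z)^{-1}\|\le C_2\,e^{n\theta},
\]
where the factor $A^r$ is absorbed into $C_2=C_2(N,\theta)$ by uniform boundedness.

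Combining the two bounds, $\|h_{n+1}-h_n\|\le MC_AC_2\,e^{-(\tau-\theta)n}\rho_\tau(y,z)$, which is summable. Hence $H^s_{yz}=\lim_n h_n$ exists and satisfies $\|H^s_{yz}-\Id\|\le L\,\rho_\tau(y,z)$ with $L=L(N,\theta)=MC_AC_2/(1-e^{-(\tau-\theta)})$. The chain rule $H^s_{yz}=H^s_{xz}\circ H^s_{yx}$ follows from the factorization $A^n(z)^{-1}A^n(y)=(A^n(z)^{-1}A^n(x))(A^n(x)^{-1}A^n(y))$ and passing to the limit, and the transport identity $H^s_{xy}=A^{-n}(\sigma^n y)H^s_{\sigma^n x,\sigma^n y}A^n(x)$ is a direct reindexing of the defining limit using $A^m(\sigma^n y)^{-1}A^{m+n}(x)=A^n(y)A^{m+n}(y)^{-1}A^{m+n}(x)$ and sending $m\to\infty$. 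The unstable case is identical, applied to $A^{-1}$ over $\sigma^{-1}$.
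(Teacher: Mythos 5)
The paper does not actually prove this proposition; it simply cites \cite[Prop.~2.5, Cor.~2.8]{viana2008almost}. Your proof is a correct reconstruction of Viana's argument and takes essentially the same route: after normalizing the metric so $A$ is Lipschitz, you telescope $A^n(z)^{-1}A^n(y)$, use exponential contraction along $W^s_{loc}$ to control the bracketed factor, and propagate the $\mc{D}(N,\theta)$ condition number bound from the anchor $x$ to $y$ and $z$ via the perturbative factorization $A^N(\sigma^{jN}y)=A^N(\sigma^{jN}x)(\Id+E_j)$, whose error decays geometrically. This yields $\|A^n(y)\|\,\|A^n(z)^{-1}\|\le C_2e^{n\theta}$ (which is precisely \cite[Lem.~2.6]{viana2008almost}), and then summability follows from $\theta<\tau$. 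One small phrasing quibble: the passage about the ``uniform lower bound $\|A^N(\sigma^{jN}x)\|\ge M^{-N}$'' is not what you need; what actually absorbs the error is the uniform \emph{upper} bound $\|A^N(\sigma^{jN}x)^{-1}\|\le M^N$ multiplying the Lipschitz estimate of $A^N$ restricted to the stable leaf. Also, when bounding $\|A^{sN}(z)^{-1}\|$ you use $\|(\Id+E_j^z)^{-1}\|$, which for small $j$ need not be close to $1$; this is fine because it is nonetheless uniformly bounded by $M^{2N}$ and only finitely many indices fall outside the regime $\|E_j^z\|<1/2$, but the write-up should say so explicitly. With these minor clarifications, the proof is correct and matches the cited source.
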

In fact, one even has continuity of the holonomies transversely because they converge uniformly. Although it is not mentioned in Butler's paper, this does seem to used there. This is a standard fact, but the only proof we are aware of is \cite[Prop.~2.10]{bochi2019extremal}, which is written under the assumption of fiber bunching. As our cocycle is not \emph{a priori} fiber bunched, we include our own proof for the sake of completeness.

\begin{lem}
For each $N,\theta$, there exist $C,\gamma>0$ such that if $x,x'\in \mc{D}(N,\theta)$ and $x'\in W^s_{loc}(x)$, $y'\in W^s_{loc}(y)$ and $x'\in W^u_{loc}(y')$, then
\[
\|H^s_{xy}-H^s_{x'y'}\|\le Cd(x,x')^{\gamma}.
\]
The analogous statement also holds for the unstable holonomies.
\end{lem}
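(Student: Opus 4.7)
The plan is to prove transverse Hölder continuity by adapting the convergence argument of Proposition~\ref{prop:viana} and balancing two competing error terms in the shift index $n$. By the equivariance formula of Proposition~\ref{prop:viana}, for any $n \ge 0$,
\[
H^s_{xy} = A^n(y)^{-1} H^s_{\sigma^n x,\sigma^n y} A^n(x),
\]
and analogously for $H^s_{x'y'}$. Since forward iterates contract on local stable leaves at rate $e^{-\tau n}$, the inner holonomies $H^s_{\sigma^n x,\sigma^n y}$ and $H^s_{\sigma^n x',\sigma^n y'}$ both lie within distance $Le^{-\tau n}$ of $\Id$ by the Lipschitz estimate in Proposition~\ref{prop:viana}.

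I decompose $H^s_{xy} - H^s_{x'y'}$ into a ``tail'' and a ``body.'' The tail $A^n(y)^{-1}(H^s_{\sigma^n x,\sigma^n y} - \Id)A^n(x)$ (and its primed analogue) is controlled by observing that $A^n(y)^{-1} A^n(x)$ is uniformly bounded in $n$ (since it converges to the bounded matrix $H^s_{xy}$), so sandwiching a small operator between $A^n(y)^{-1}$ and $A^n(x)$ costs at most the dispersion factor $\|A^n(x)\| \|A^n(x)^{-1}\| \le e^{n\theta}$ supplied by the $\mc{D}(N,\theta)$ hypothesis. The body $A^n(y)^{-1} A^n(x) - A^n(y')^{-1} A^n(x')$ is handled by telescoping the two products into a sum whose generic term involves a H\"older-small factor $\|A(\sigma^k x) - A(\sigma^k x')\| \le C\rho(\sigma^k x,\sigma^k x')^\alpha$ (where $\alpha$ is the H\"older exponent of $A$) together with partial products $A^k$ and $A^{n-k}$; by regrouping via insertions of $A^k A^{-k}$ type identities, each term becomes a bounded full product times a H\"older-small difference times a dispersion excursion. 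Choosing $n$ so that the tail of size $e^{n(\theta-\tau)}$ and the body of size $e^{Cn}d(x,x')^\alpha$ are comparable---i.e., $n \sim \log(1/d(x,x'))$---yields a H\"older exponent $\gamma$ depending on $\alpha$, $\theta$, and $\tau$.

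The main obstacle is the body estimate, since the partial products $A^k$ and $A^{n-k}$ lack individually useful bounds; only the combined quantity $\|A^n(y)^{-1} A^n(x)\|$ is uniformly controlled via the convergence of holonomies. The crucial technical move, already implicit in the proof of Proposition~\ref{prop:viana}, is to regroup each telescoping summand so that every unbounded factor appears inside a ``full'' holonomy-approximating pair (whose norm is controlled by the convergence bound) framing a short dispersion excursion (whose norm is controlled by $\mc{D}(N,\theta)$). Once this bookkeeping is in place, summing the geometric tails and optimizing $n$ delivers the claimed bound; the unstable case is symmetric.
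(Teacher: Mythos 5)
Your proposal follows essentially the same route as the paper: split $H^s_{xy}-H^s_{x'y'}$ via the equivariance identity $H^s_{xy}=A^{-n}(\sigma^ny)H^s_{\sigma^nx,\sigma^ny}A^n(x)$ and the Lipschitz estimate on the inner holonomy into a tail of size $O(e^{n(\theta-\tau)})$ (via the $\mc{D}(N,\theta)$ dispersion bound) plus the body $A^{-n}(\sigma^ny)A^n(x)-A^{-n}(\sigma^ny')A^n(x')$, then telescope the body using the H\"older continuity of $A$ and pick $n\sim\log(1/d(x,x'))$. The one place you overestimate the difficulty is the body: the partial products $A^k$ do have an individually useful (if crude) bound, namely $\|A^k\|\le e^{k\eta}$ with $e^\eta\ge\sup\|A^{\pm 1}\|$; the paper simply telescopes $A^n(x)-A^n(x')$ on its own with this bound, arranging $d(x,x')\le e^{-n\tau'}$ with $\tau'\ge 2\tau+2\eta+\ln 2$ large enough that the H\"older smallness defeats the $e^{n\eta}$ growth and yields $A^n(x)-A^n(x')=O(e^{-(\tau+\eta)n})$, after which multiplying by $\|A^{-n}(\sigma^ny)\|\le e^{n\eta}$ gives $O(e^{-\tau n})$, dominated by the tail. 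No regrouping via "$A^kA^{-k}$ insertions" or appeal to the convergence of $A^n(y)^{-1}A^n(x)$ is needed for the body; the crude bound suffices.
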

\begin{proof}
We will obtain an estimate under the assumption that $d(x,x')\le e^{-n\tau'}$ where $\tau'\ge 2\tau+2\eta+\ln(2)$, where $e^{\eta}\ge\max_{x\in M}\{ \|A^N(x)\|,\|A^{-N}(x)\|\}$. In the rest of the proof, we will assume that $N=1$ to simplify the notation.

To begin, we have that 
\[
H_{xy}^s=A^{-n}(f^n(y))H_{f^n(x),f^n(y)}^sA^n(x).
\]
By the Lipschitz estimate on the holonomies in Proposition \ref{prop:viana}, we have that 
\[
H_{xy}^s=\Id+O(d(x,y)),
\]
where the big-O is independent of the local stable leaf $x$ and $y$ lie in. Hence combining the two previous equations:
\begin{equation}\label{eqn:h_xyexpanded}
H_{xy}^s=A^{-n}(f^n(y))A^n(x)+A^{-n}(f^n(y))O(d(f^n(y),f^n(x)))A^n(x).
\end{equation}
By \cite[Lem.~2.6]{viana2008almost}, there exists a $C>0$ independent of $x$ and $y$ such that $\|A^{-n}(f^n(y))\|\|A^n(x)\|<Ce^{n\theta}$. Hence we see that the last term in equation \eqref{eqn:h_xyexpanded} is $O(e^{n(\theta-\tau)})$.

To conclude it remains to estimate 
\[
A^{-n}(f^n(y))A^n(x)-A^{-n}(f^n(y'))A^n(x')=H^s_{xy}-H^s_{x'y'}+O(e^{n(\theta-\tau)}).
\]
For this we first estimate 
\[
A^n(x)-A^n(x')
\]
by using that $x$ and $x'$ are assumed to be close. For this write
\[
A^n(x')=\prod_{i=1}^n \left(A(f^i(x))+O(e^{-n\tau'+i\tau})\right),
\]
but by our choice of $\tau'\ge 2\tau+2\eta+\ln(2)$, we see that
\[
A^n(x)=A^n(x')+O(e^{-\tau n-\eta n}).
\]
We now estimate 

\begin{align*}
&A^{-n}(f^n(y))A^n(x)-A^{-n}(f^n(y'))A^n(x')\\
&=A^{-n}(f^n(y))(A^n(x)-A^n(x'))+(A^{-n}(f^n(y))-A^{-n}(f^n(y')))A^n(x')\\
&=O(e^{n\eta})O(e^{(-\tau-\eta)n})+O(e^{(-\tau-\eta)n})O(e^{n\eta})\\
&= O(e^{-\tau n}).
\end{align*}
Hence we see that 
\[
\|H_{xy}^s-H_{x'y'}^s\|=O(e^{n(\theta-\tau)}).
\]
We assumed at the beginning that $d(x,x')\le e^{-n\tau'}$. Hence if $(\theta-\tau)=\gamma\tau'$ for some $0<\gamma<1$, we see that 
\[
\|H_{xy}^s-H_{x'y'}^s\|\le Cd(x,x')^{\gamma},
\]
as desired.
\end{proof}

We already know that the holonomies are H\"older continuous as we vary $x$ and $y$ within a leaf, hence we may strengthen Proposition \ref{prop:viana} slightly.
\begin{prop}\label{prop:transverse_holonomy_continuity}
In the context of Prop. \ref{prop:viana}, the stable holonomy $H^s_{xy}$ depends continuously on the points $x,y\in \mc{D}(N,\theta)$ (even when $x$ and $y$ vary transversely to the common local stable leaf they lie in).
\end{prop}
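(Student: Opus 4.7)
The plan is to combine two estimates already at hand---the within-leaf Lipschitz continuity of Proposition \ref{prop:viana} and the transverse H\"older estimate just established in the preceding lemma---using the local product structure of $\Sigma$ as a gluing device. The key observation is that two nearby pairs of points lying on (possibly distinct) local stable leaves can be linked by inserting intermediate points that differ from the originals only along a stable direction or only along an unstable direction, which is exactly what the two estimates are designed to control.

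Concretely, given a convergent sequence $(x_n, y_n) \to (x, y)$ in $\mathcal{D}(N,\theta) \times \mathcal{D}(N,\theta)$ with $x_n \in W^s_{loc}(y_n)$ (and hence $x \in W^s_{loc}(y)$), I would introduce the intermediate points
\[
z_n := [x, x_n] \in W^u_{loc}(x) \cap W^s_{loc}(x_n), \qquad w_n := [y, y_n] \in W^u_{loc}(y) \cap W^s_{loc}(y_n).
\]
Since $W^s_{loc}(x_n) = W^s_{loc}(y_n)$, all four points $x_n, y_n, z_n, w_n$ lie on a single local stable leaf, so the composition rule of Proposition \ref{prop:viana} yields
\[
H^s_{x_n y_n} = H^s_{w_n y_n} \circ H^s_{z_n w_n} \circ H^s_{x_n z_n}.
\]
The two outer factors tend to $\Id$ by the Lipschitz bound of Proposition \ref{prop:viana}: the point $z_n$ agrees with $x_n$ in all forward coordinates and with $x$ in all backward coordinates, so $\rho(x_n, z_n)$ is controlled by the past-coordinate distance from $x_n$ to $x$ and tends to zero, and the analogous estimate holds for $w_n, y_n$. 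The middle factor converges to $H^s_{xy}$ by the preceding lemma, because $(z_n, w_n)$ is obtained from $(x, y)$ by a small displacement along the corresponding local unstable leaves, precisely the configuration in which that lemma delivers a H\"older decay estimate. Multiplying the three factor estimates gives $H^s_{x_n y_n} \to H^s_{xy}$, and the unstable-holonomy statement follows by time reversal.

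The step I expect to require the most care is the application of the preceding lemma to the middle factor, since that lemma is stated under the assumption that both endpoints of the comparison lie in the uniformity set $\mathcal{D}(N,\theta)$, whereas the bracket-produced intermediate points $z_n, w_n$ need not be. Inspecting the lemma's proof, however, the crucial product bound $\|A^{-n}(f^n(\cdot))\| \|A^n(\cdot)\| \le Ce^{n\theta}$ is invoked only at the base points $x$ and $y$ (which do lie in $\mathcal{D}(N,\theta)$); the perturbations $z_n, w_n$ enter purely through the continuity estimate for $A^n$ and the rate of convergence $z_n \to x$, $w_n \to y$. Granting this, the H\"older decay extends to the configuration that arises, and the three-factor estimate closes the argument.
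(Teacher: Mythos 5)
Your argument is correct and is essentially the paper's: the proposition is presented there as an immediate consequence of combining the within-leaf Lipschitz bound of Proposition \ref{prop:viana} with the transverse H\"older estimate of the preceding lemma, which is exactly the three-factor bracket decomposition you spell out. One small correction to your final paragraph: the preceding lemma's proof does invoke the product bound $\|A^{-n}(f^n(y'))\|\,\|A^n(x')\|\le Ce^{n\theta}$ for the displaced pair as well (it is needed to expand $H^s_{x'y'}$ and absorb its error term), but this bound is still available for your bracket points $z_n,w_n$ because they lie on the local stable leaf of $x_n\in\mc{D}(N,\theta)$ and Viana's estimate applies to any two points on the local stable leaf of a point of $\mc{D}(N,\theta)$.
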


It is useful to note that if we have a holonomy $H^s_{xy}\colon \R^d\to \R^d$, then this holonomy induces holonomies on other spaces such as $\Gr(k,d)$. It is useful to note these induced holonomies may be defined even when bunching conditions do not hold for the corresponding induced cocycles. For more discussion about cocycles and related constructions, see \cite{avila2013holonomy}.

The following lemma will be used several times during the proof.

\begin{lem}\label{lem:piece_together_lemma}
Suppose that $A$ is a cocycle as above.
If $\Sigma\times \R^n$ has a continuous invariant subbundle $V_1$ such that $V_1$ admits a continuous framing so that with respect to the framing of $V_1$ $A$ takes values in $\lambda K$ for some compact group $K$, and $W/V_1$ has a framing so that the quotient cocycle $\overline{A}\colon W/V_1\to W/V_1$ takes values in a Zimmer block $U_{\lambda}$, then there exists a continuous framing of $\Sigma\times \R^n$ such that, with respect to this framing, $A$ takes values in a Zimmer block $U_{\lambda}'$. This is also true, mutatis mutandis, for framings not defined over all of $\Sigma$ and for H\"older continuity.
\end{lem}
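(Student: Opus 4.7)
The plan is to construct a continuous framing of $\Sigma \times \R^n$ by concatenating the given framing of $V_1$ with a continuous lift of the quotient framing, and then to read off the matrix of $A$ in block form in this combined framing.

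First, since $V_1$ is a continuous subbundle of the trivial bundle $\Sigma \times \R^n$, its pointwise Euclidean orthogonal complement $V_1^\perp$ (with respect to the standard inner product on $\R^n$) is also a continuous subbundle, and the composition $V_1^\perp \hookrightarrow \Sigma \times \R^n \to (\Sigma\times \R^n)/V_1$ is a continuous fiberwise linear isomorphism $\iota\colon V_1^\perp \to (\Sigma\times \R^n)/V_1$. Pulling back the given framing $[\bar f_1,\ldots,\bar f_m]$ of the quotient via $\iota^{-1}$ gives a continuous framing $[f_1,\ldots,f_m]$ of $V_1^\perp$. Concatenating this with the given framing $[e_1,\ldots,e_k]$ of $V_1$ produces a continuous framing $[e_1,\ldots,e_k,f_1,\ldots,f_m]$ of $\Sigma\times\R^n$. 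The construction respects H\"older regularity (the orthogonal projection onto $V_1$, and hence $\iota^{-1}$, is H\"older whenever $V_1$ is) and is purely pointwise, so it restricts to any subset $E\subset \Sigma$ on which the hypothesized framings are defined.

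Second, I would compute $A$ in this combined framing. Because $V_1$ is $A$-invariant, $Ae_i\in V_1$ for each $i$, so the lower-left $m\times k$ block of $A$ vanishes, and the upper-left $k\times k$ block is the matrix of $A|_{V_1}$ in $[e_1,\ldots,e_k]$, which by hypothesis lies in $e^\lambda K$. For each $j$, write $A f_j = v_j + w_j$ with $v_j\in V_1$ and $w_j\in V_1^\perp$; the coordinates of $v_j$ in $[e_1,\ldots,e_k]$ give the (continuous) upper-right ``$*$'' block, while the image of $w_j$ under $\iota$ equals $\bar A \bar f_j$, so the lower-right $m\times m$ block is precisely the matrix of $\bar A$ in $[\bar f_1,\ldots,\bar f_m]$. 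Since $\bar A$ takes values in $U_\lambda$, this lower-right block is itself block upper triangular with diagonal blocks $e^\lambda M_1,\ldots,e^\lambda M_k$, where $M_i\in K_i$.

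Assembling the diagonal blocks, the matrix of $A$ in the combined framing is block upper triangular with consecutive diagonal blocks $e^\lambda K,\, e^\lambda K_1,\ldots,e^\lambda K_k$, all scaled by the common exponent $\lambda$, and is therefore an element of a Zimmer block $U_\lambda'$ of exponent $\lambda$ associated with the compact groups $(K,K_1,\ldots,K_k)$. The only potentially nontrivial step is the existence of the continuous complement $V_1^\perp$, and this obstacle dissolves once one notes that $V_1$ sits inside the trivial bundle $\Sigma\times\R^n$ so the standard Euclidean structure furnishes the complement without any partition-of-unity argument; the regularity and block structure then follow by direct inspection.
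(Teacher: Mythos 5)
Your proposal is correct and follows essentially the same approach as the paper: fix a continuous complement to $V_1$ (you use the Euclidean orthogonal complement, the paper just says ``fix a continuous complement''), lift the quotient framing via this complement, and read off the block-triangular form of $A$ in the concatenated framing. Your version is somewhat more explicit about why the complement and the lift exist and inherit the stated regularity, but the underlying argument is the same.
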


\begin{proof}
Let $[e_1,\ldots,e_k]$ be the framing of $V_1$. Let $[e_{k+1},\ldots,e_{n}]$ be the framing of $W/V_1$. Fix a continuous complement $V'$ to $V_1$ inside of $W$. Then for $k+1\le i\le n$, let $\wt{e}_i$ be a lift of $e_i$ to $\R^n$ with respect to this complement. With respect to the continuous framing $[e_1,\ldots,e_k,\wt{e}_{k+1},\ldots,\wt{e}_{n}]$ the cocycle lies in such a Zimmer block.
\end{proof}

\noindent \textbf{Warning:} In what follows, we will assume that $\lambda=0$. We can always assume this by replacing the cocycle $A$ with the cocycle $e^{-\lambda}A$.

\section{Sketch of Proof}
The steps in the proof of Theorem \ref{thm:Zimmer_block_rigidity} follow \cite{butler2018measurable}.

\begin{enumerate}
\item Given that $\lambda_+(\mu)=0$, $\Sigma$ is measurably exhausted by the sets $\mc{D}(N,\theta)$ on which the holonomies are uniformly continuous.

\item We show that over the sets $\mc{D}(N,\theta)$ that the measurable block structure gives a uniformly continuous flag as well as metrics on the flag's quotients, so that with respect to these structures the cocycle looks like a Zimmer block.

\item We show that there exists a periodic point with $\lambda_+(p)=\lambda_{-}(p)=0$.

\item If there is a periodic point with $\lambda_+(p)-\lambda_{-}(p)>0$, we then use a shadowing lemma of Butler to produce new periodic points whose return map has large norm yet whose orbits lie in $\mc{D}(N,\theta)$; through a shadowing argument we show that this is incompatible with the block structure.

\item Once we know that all periodic points have $\lambda_+(p)=\lambda_{-}(p)=0$, then this implies that for some $N,\theta>0$, $\mc{D}(N,\theta)$ contains all of $\Sigma$ and thus we are done by the second step.

\end{enumerate}

\section{Uniform Continuity of Dynamical Structures}

In this section we give a number of results concerning holonomies and the sets $\mc{D}(N,\theta)$ that we use in the proof of Theorem \ref{thm:Zimmer_block_rigidity}.

At almost every $x\in \Sigma$, the measurable transfer function $u$ gives a measurable invariant flag 
\[\{0\}=\mc{E}_0\subset \mc{E}_1\subset \cdots \subset \mc{E}_k=\R^d,\]
 where $k$ is equal to the number of compact diagonal blocks in $U_{\lambda}$. 
The measurable framing also gives a measurable metric structure by declaring the framing to be orthonormal.  By quotienting, we obtain such a metric and framing of $\mc{E}_{i+1}/\mc{E}_i$ for each $i$.

Our first goal is to show that these bundles and the associated metric are uniformly continuous and holonomy invariant on the sets $\mc{D}(N,\theta)$.
 It suffices to check these things for just $\mc{E}_1$, as Lemma \ref{lem:piece_together_lemma} allows us to obtain the result for the whole flag by quotienting. 
Lemma \ref{lem:inv_on_full_measure} establishes the holonomy invariance of these structures on a full measure set.

Before we prove Lemma \ref{lem:inv_on_full_measure}, we recall a useful invariance principle due to Ledrappier, which requires some additional language to state. Let $(M,\mc{M},\mu)$ be a Lebesgue measure space. Let $T\colon M\to M$ be an invertible measurable transformation. A $\sigma$-algebra $\mc{B}\subset\mc{M}$ is \emph{generating} if the algebras $T^n(\mc{B})$, $n\in \Z$, generate $\mc{M}$ mod $0$, i.e. for every $E\in \mc{M}$, there exists $E'$ in the smallest $\sigma$-algebra that contains all the $T^n(\mc{B})$ such that $\mu(E\Delta E')=0$. 

\begin{lem}\label{lem:ledrappier}
\cite{ledrappier1986positivity} Let $(M,\mc{M},\mu)$ be a Lebesgue space. Let $B\colon M\to \GL(d,\R)$ be a measurable cocycle such that the functions $x\mapsto \log \|B(x)^{\pm}\|$ are $\mu$-integrable. 
Let $\mc{B}\subset \mc{M}$ be a generating $\sigma$-algebra such that both $T$ and $B$ are $\mc{B}$-measurable $\mod 0$. 
If $\lambda_-(x)=\lambda_+(x)$ for $\mu$-a.e.~$x\in M$, then for any invariant probability measure $m$ on $\Sigma\times \mathbb{P}(\R^d)$ that projects to $\mu$, any disintegration $x\mapsto m_x$ of $m$ along the projective space fibers is $\mc{B}$-measurable mod $0$.
\end{lem}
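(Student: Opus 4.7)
The plan is to follow Ledrappier's original entropy / relative information argument. First, consider the projective bundle cocycle $\bar B \colon M \times \mathbb{P}(\R^d) \to M \times \mathbb{P}(\R^d)$ defined by $\bar B(x,[v]) = (Tx,[B(x)v])$, under which $m$ is invariant with disintegration $\{m_x\}_{x \in M}$. Let $\tilde m_x \coloneqq \E[m_x \mid \mc B](x)$ denote the measure-valued conditional expectation; this exists as a $\mc B$-measurable map into probabilities on $\mathbb{P}(\R^d)$ because the latter is a compact metric space and probabilities on it form a standard Borel space. The desired conclusion that $x \mapsto m_x$ is $\mc B$-measurable mod zero is exactly the statement $m_x = \tilde m_x$ for $\mu$-a.e.\ $x$.

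Next, I would track the relative Kullback--Leibler entropy
\[
I(x) \coloneqq H(m_x \mid \tilde m_x) \in [0, \infty],
\]
which vanishes iff $m_x = \tilde m_x$. By monotonicity of relative entropy under Markov operators applied to the projective action $v \mapsto B(x) \cdot v$, combined with $\bar B$-invariance of $m$ and of the barycenter measure built from $\tilde m_x$ (this last invariance uses that $T$ and $B$ are $\mc B$-measurable mod zero, so conditional expectation commutes with the cocycle action), one obtains $I(Tx) \leq I(x)$. Integrating against $\mu$ and using $T$-invariance forces equality $\mu$-almost everywhere in this monotonicity.

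The crucial role of the hypothesis $\lambda_-(x) = \lambda_+(x)$ enters in the equality-case analysis. Equal Lyapunov exponents make the derivative of the projective cocycle vanish in a logarithmic-integral sense (the induced cocycle on $\mathbb{P}(\R^d)$ has zero projective Lyapunov exponent), so by Oseledets the projective action is, up to measurable conjugacy, isometric with respect to any fiberwise invariant family of conditionals. Equality in the entropy monotonicity then propagates across iterates: combining it with the generating property $\bigvee_n T^n \mc B = \mc M$ mod zero and martingale convergence of $\E[m_x \mid \bigvee_{|n| \leq N} T^n \mc B]$ forces $I(x) = 0$ $\mu$-a.e., yielding $m_x = \tilde m_x$.

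The main obstacle I expect is making rigorous the link between equality of Lyapunov exponents and equality in the entropy monotonicity. This is the heart of the invariance principle and requires a careful invocation of Oseledets on the projective cocycle to see that the equal-exponents hypothesis really does rigidify invariant conditional families. Technical side issues include the case where $m_x \not\ll \tilde m_x$ (standardly addressed by convolving with a smooth reference measure on $\mathbb{P}(\R^d)$ and passing to the limit in the information inequality) and verifying the commutation of conditional expectation with the dynamics, which is precisely where the $\mc B$-measurability mod zero of both $T$ and $B$ is used.
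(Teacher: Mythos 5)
The paper offers no proof of this lemma at all: it is quoted verbatim as Ledrappier's invariance principle and used as a black box, so there is no internal argument to compare yours against. What you have written is a reconstruction of Ledrappier's own entropy argument, and as a reconstruction it has the right general shape (conditional measures $m_x$, their $\mc{B}$-conditional expectations $\tilde m_x$, a relative-entropy functional that must be shown to vanish), but the step that carries all the content is missing, and the mechanism you propose for it is not correct.

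Concretely: the fiber maps $[v]\mapsto [B(x)v]$ are homeomorphisms of $\mathbb{P}(\R^d)$, and Kullback--Leibler divergence is \emph{preserved}, not decreased, under pushforward by a bijection; any genuine decrease in $I$ can only come from the conditional-expectation (averaging) step, and quantifying that decrease is exactly the theorem. Your chain ``$I(Tx)\le I(x)$, integrate, deduce equality a.e.'' is therefore empty: equality a.e.\ is consistent with $I$ being any invariant (hence a.e.\ constant, possibly positive) function, and nothing so far uses the Lyapunov hypothesis. The assertion that equal exponents make the projective action ``isometric up to measurable conjugacy'' is also unjustified --- indeed, upgrading zero exponents to isometric-like structure is essentially the content of the main theorem of the paper you are reading, not something one may assume. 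In Ledrappier's actual argument the hypothesis $\lambda_-=\lambda_+$ enters through a Furstenberg-type integral formula: a fibered relative entropy $h\ge 0$ is bounded \emph{above} by (a combination of) the gap between the extremal exponents, via Jensen's inequality applied to Radon--Nikodym derivatives of the conditionals against their $\mc{B}$-conditional expectations; zero gap then forces $h=0$ directly, and $h=0$ is equivalent to $\mc{B}$-measurability of $x\mapsto m_x$. Since you yourself flag this link as ``the main obstacle,'' the proposal should be regarded as a plan whose central step is open rather than a proof; for this lemma the honest route is simply to cite \cite{ledrappier1986positivity} (or the Avila--Viana formulation of the invariance principle), as the paper does.
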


The following lemma shows that there exists a full measure set where the measurable flag has holonomy invariance.

\begin{lem}\label{lem:inv_on_full_measure}
There is a $\sigma$-invariant subset $\Omega$ of $\Sigma$ with $\mu(\Omega)=1$ such that every $x\in \Omega$ is in $\mc{D}(N_x,\theta_x)$, with $\theta_x<\tau$, and such that if $x,y\in \Omega$ and $y\in W^*_{loc}(x)$ then if $\eta$ denotes $\mc{E}_1,\ldots, \mc{E}_k$, or the metrics on $\mc{E}_{i+1}/\mc{E}_i$, then
\[
H^*_{xy}\eta_x=\eta_y,
\]
where $*\in \{s,u\}$.
Further, for every $n\in \Z$ we have that 
\[
A^n(x)[\eta_x]=\eta_{\sigma^n(x)}.
\]
\end{lem}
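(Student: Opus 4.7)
The plan is to use the hypothesis that $A$ is measurably conjugate into a Zimmer block of exponent $0$ to force all Lyapunov exponents to vanish, apply the $\mc{D}(N,\theta)$-exhaustion from Lemma \ref{lem:blocks_exhaust}, and then invoke Ledrappier's invariance principle (Lemma \ref{lem:ledrappier}) to upgrade the measurable invariant flag and metrics into holonomy-invariant objects on a full-measure $\sigma$-invariant set $\Omega$.

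First, Proposition \ref{prop:structure_implies_zero_exponents} gives $\lambda_+(\mu)=\lambda_-(\mu)=0$. The same vanishing holds for each induced cocycle on $\wedge^j\R^d$ (its Lyapunov exponents are sums of vanishing exponents of $A$) and for the cocycle induced on the bundle of quadratic forms on $\mc{E}_{i+1}/\mc{E}_i$ (the quotient cocycle takes values in the compact group $K_i$). Lemma \ref{lem:blocks_exhaust} then furnishes a full-measure set $\Omega_0$ on which every $x$ lies in some $\mc{D}(N_x,\theta_x)$ with $\theta_x<\tau$.

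Second, each $\mc{E}_j$ is an $A$-invariant measurable section of the Grassmannian bundle $\Sigma\times\Gr(j,d)$; via the Pl\"ucker embedding this is realized as a Dirac-mass disintegration of an invariant probability measure on $\Sigma\times\mathbb{P}(\wedge^j\R^d)$. Similarly, each metric $g_i$ is a section of an $\SL(r_i,\R)/\SO(r_i)$-bundle, invariant under an induced cocycle with vanishing exponents. Applying Lemma \ref{lem:ledrappier} with $\mc{B}$ taken in turn as the future and past cylinder $\sigma$-algebras (both generating by transitivity and local product structure) yields that each disintegration is measurable with respect to both the stable and unstable $\sigma$-algebras on a full-measure set $\Omega_1$. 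To upgrade this abstract measurability into the pointwise holonomy identity $H^*_{xy}\eta_x=\eta_y$, I would fix a Lusin set $K\subset\Omega_1\cap\mc{D}(N,\theta)$ on which the framing is uniformly continuous; for $y\in W^s_{loc}(x)$, use the intertwining identity $A^n(y)H^s_{xy}=H^s_{\sigma^nx,\sigma^ny}A^n(x)$ of Proposition \ref{prop:viana}, pick $n_k\to\infty$ by Poincar\'e recurrence so that $\sigma^{n_k}x,\sigma^{n_k}y\in K$, and invoke transverse continuity of holonomies (Proposition \ref{prop:transverse_holonomy_continuity}) together with the norm control on $\mc{D}(N,\theta)$ to force $H^s_{xy}\eta_x=\eta_y$. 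The unstable case is symmetric, and the metrics are handled analogously via the corresponding induced bundle.

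Finally, set $\Omega:=\bigcap_{n\in\Z}\sigma^n(\Omega_1)$; this is $\sigma$-invariant and of full measure, and the identity $A^n(x)[\eta_x]=\eta_{\sigma^nx}$ is immediate from the $A$-invariance of the measurable framing. The main obstacle I anticipate is precisely the last step of paragraph three: converting Ledrappier's abstract measurability output into pointwise holonomy invariance. This requires carefully balancing the exponentially fast decay $\|H^s_{\sigma^{n_k}x,\sigma^{n_k}y}-\Id\|=O(e^{-\tau n_k})$ from Proposition \ref{prop:viana} against the at-most-$e^{\theta n_k}$ growth of $\|A^{n_k}(y)^{\pm1}\|$ permitted on $\mc{D}(N,\theta)$; it is exactly the choice $\theta<\tau$ that makes this estimate close.
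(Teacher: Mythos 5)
Your proposal has the right shape at a high level—Ledrappier plus the holonomy machinery on the blocks $\mc{D}(N,\theta)$—but it is missing the key preparatory step that makes Ledrappier applicable, and this gap cannot be filled by the recurrence argument you sketch.

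The hypothesis of Lemma \ref{lem:ledrappier} requires the \emph{cocycle} $B$ to be $\mc{B}$-measurable mod $0$. If you take $\mc{B}$ to be the future cylinder $\sigma$-algebra (equivalently, the partition into local stable sets), then the H\"older cocycle $A$ is \emph{not} $\mc{B}$-measurable: $A(x)$ depends on all coordinates of $x$, not just the non-negative ones. So Ledrappier simply does not apply to $A$ with this choice of $\mc{B}$, and the same goes for the induced cocycles on $\wedge^j\R^d$ and on the bundle of metrics. The paper gets around this by first measurably conjugating the cocycle by the stable holonomy via the map $h\colon(y,\xi)\mapsto(y,H^s_{[\omega^i,y]y}\xi)$, producing a cocycle $\mc{A}_s$ that is a.e.\ constant on local stable leaves (and hence genuinely $\mc{B}$-measurable). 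Only then does Ledrappier apply. Moreover, this conjugation is not just bookkeeping: without it, the Ledrappier conclusion ``the disintegration is $\mc{B}$-measurable'' would say $\eta_x=\eta_y$ a.e.\ along stable leaves, which is not the statement you want and is generally false (the holonomies are nontrivial). After conjugating by $h$, constancy of $h(\eta)$ along stable leaves unwinds precisely to the holonomy identity $H^s_{xy}\eta_x=\eta_y$, because the stable holonomies of $\mc{A}_s$ are trivial and $h$ intertwines the two holonomy families. Your phrasing ``measurable with respect to both the stable and unstable $\sigma$-algebras'' is a symptom of this confusion: if $\eta$ itself (not its two different holonomy-conjugated versions) were $\mc{B}$-measurable for both the stable and unstable partitions, then by local product structure it would be a.e.\ constant, which is too strong.

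Your fallback argument in paragraph three also does not close as written. After applying the intertwining identity and pulling back through $A^{-n_k}(\sigma^{n_k}y)$, you need $H^s_{\sigma^{n_k}x,\,\sigma^{n_k}y}\eta_{\sigma^{n_k}x}-\eta_{\sigma^{n_k}y}$ to decay strictly faster than $e^{\theta n_k}$ grows. The holonomy term $\|H^s-\Id\|=O(e^{-\tau n_k})$ has the required rate, but the discrepancy $\eta_{\sigma^{n_k}x}-\eta_{\sigma^{n_k}y}$ is only controlled by the modulus of continuity on the Lusin set $K$, which is \emph{not quantitative}: a Lusin set gives uniform continuity, not a H\"older or exponential rate. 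So the balance you describe, ``$e^{-\tau n_k}$ against $e^{\theta n_k}$,'' does not cover the term coming from the unknown modulus of continuity. This term only vanishes if you have exact constancy of the (holonomy-conjugated) section along stable leaves—which is precisely the Ledrappier conclusion after the conjugation step. In short: the conjugation by $h$ is not an optional convenience but the load-bearing step, both to satisfy Ledrappier's hypotheses and to make the resulting constancy statement be the holonomy invariance you are after.
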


\begin{proof}

We will show that the result holds when $\eta$ denotes the bundle $\mc{E}_1$. 
The invariance of the other bundles then follows similarly.
The result about the invariance of the metrics has a different proof but is identical to \cite[Lem. 4.4]{butler2018measurable} and hence is omitted.

Our proof will use Lemma \ref{lem:ledrappier}, which requires some setup for its application. 
To begin, by Proposition \ref{lem:blocks_exhaust} for a.e.~$x\in \Sigma$, there exist $N>0$ and $0<\theta<\tau$ such that $x\in \mc{D}(N,\theta)$. Thus by Lemma \ref{prop:viana} for a.e.~$x\in \Sigma$, there are holonomies defined between the points in $W^s_{loc}(x)$. Let $\mc{D}$ denote the union of the sets $\mc{D}(N,\theta)$ over all $N,\theta>0$. Note that $\mu(\mc{D})=1$.

Let $r=\dim \mc{E}_1$ and consider the associated Grassmannian bundle $\Sigma\times \Gr_r(\R^d)$. The cocycle $A$ also induces a cocycle $\wt{A}$ on this Grassmannian bundle. Further, the stable and unstable holonomies $H^*$ induce holonomies for $\Sigma\times \Gr_r(\R^d)$ which we also denote by $H^*_{xy}$.

To begin, we will measurably conjugate the cocycle so that it is constant along local stable manifolds. This is a standard construction: see, for example, \cite[Cor. 1.15]{bonatti2003genericite}. To this end, we define a map $h\colon \Sigma\times \R^n\to \Sigma\times \R^n$.  By the product structure, we can choose points $\omega^i\in [0;i]$ such that for almost every $y\in [0;i]$, $W^u_{loc}(\omega^i)\cap W^s_{loc}(y)$ is a point in $\mc{D}$. We now define $h$ by
\[
h\colon (y,\xi)\mapsto (y,H_{[\omega^i,y]y}^s(\xi)).
\]
By our choice of the basepoints, $h$ is measurable.

We denote the conjugated cocycle by $\mc{A}_s$. 
By the final relationship in Proposition \ref{prop:viana}
, $\mc{A}_s$ is almost surely constant on local stable leaves, i.e. for almost every $y$,
\[
\mc{A}_s\colon (y,\xi)\mapsto (\sigma(y),A([\omega_i,y])\xi).
\]
In particular as $A$ is continuous, this shows that the integrability assumption of Lemma \ref{lem:ledrappier} is satisfied.

The measurable invariant subbundle $\mc{E}_1$ gives a measurable invariant section of $\Sigma\times \Gr_r(\R^d)$, which we denote by $\eta$, as before. 
Let $m$ be the lift of $\mu$ to the graph of $\eta$ inside of $\Sigma\times \Gr_r(\R^d)$. 
This is the same as defining $m$ to be the measure projecting to $\mu$ such that for $\mu$-a.e.~$x\in \Sigma$, the conditional measure $m_x$ on the fiber $\{x\}\times \Gr_r(\R^d)$ is the atomic measure at $[\mc{E}_1(x)]\in \Gr_r(\mathbb{R}^d)$. Since $\eta$ is invariant, so is the measure $m$.

Let $\mc{B}$ be the partition of $\Sigma$ into local stable manifolds. The shift $\sigma$ is $\mc{B}$ measurable. Further as $\mc{A}_s$ is constant on local stable manifolds, we see that $\mc{A}_s$ is $\mc{B}$ measurable mod $0$. Thus by Lemma \ref{lem:ledrappier}, we see that the disintegration of $m$, $x\mapsto m_x$ is $\mc{B}$ measurable modulo $0$. 
Due to the product structure of $\mu$, this implies $m_x$ is almost surely constant on almost every local stable manifold. 
But for a.e.~$x$, $m_x$ is a Dirac mass on the image of $\eta_x$ under the conjugacy, $h$. 
Because $\mc{A}_s$ is a.s.~constant on local stable leaves, its stable holonomies are trivial.
Thus as $m_x$ is almost surely constant on almost every local stable leaf, it is invariant under the stable holonomies almost everywhere.

As is easily checked, for points $x,y$ where $h$ is defined and the $\mc{A}_s$ and $\mc{A}$ holonomies between $x$ and $y$ exist, $h$ intertwines these holonomies. 
Thus as the location of the Dirac mass $m_x$ is the image of $\eta_x$, we see that $\eta$ is almost surely invariant under the stable holonomies arising from $\mc{A}$.  
This means that there exists a full $\mu$-measure set $S\subset \Sigma$ such that if $x,y\in S$ and $y\in W^s_{loc}(x)$, then
\[
H_{xy}^s\eta_x=\eta_y,
\]
which is the desired conclusion.
The same argument gives the analogous result for the unstable holonomies. The claim about the invariance of $\eta$ under the cocycle is immediate.
\end{proof}

Our goal is to extend these flags and metrics to be defined on all of $\Sigma$. To achieve this, we will use some results of Butler concerning the sets $\mc{D}(N,\theta)$.

\begin{prop}\label{prop:butler45}
\cite[Lemma 4.5]{butler2018measurable} Suppose that we have a cocycle with $\lambda_+(\mu)=\lambda_-(\mu)=0$. Then for each $N>0$ and $\theta<\tau$, there is an $N_*\ge N$ and $\theta\le \theta_*<\tau$ such that 
\[
\mc{D}(N,\theta)\subseteq \supp(\mu\vert \mc{D}(N_*,\theta_*)).
\]
\end{prop}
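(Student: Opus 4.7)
The plan is to show, for each $x \in \mc{D}(N,\theta)$ and every open neighborhood $U \ni x$, that $\mu(U \cap \mc{D}(N_*,\theta_*)) > 0$ for some $N_* \ge N$ and $\theta \le \theta_* < \tau$ depending only on $N, \theta$ and the H\"older data of $A$. The strategy is to produce many positive-$\mu$-measure points near $x$ in $\mc{D}(N_*,\theta_*)$ by combining (via the bracket) ``good'' pasts and futures drawn from a single large, fixed $\mc{D}$-subset.

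First, by Lemma \ref{lem:blocks_exhaust} together with the monotonicity $\mc{D}(N,\theta) \subseteq \mc{D}(kN,\theta)$ for every $k \ge 1$, fix a large multiple $N_0$ of $N$ and a $\theta \le \theta_0 < \tau$ such that $\mu(\mc{D}(N_0,\theta_0))$ is as close to $1$ as needed; in particular $\mc{D}(N,\theta) \subseteq \mc{D}(N_0,\theta_0)$, so the given $x$ already lies in $\mc{D}(N_0,\theta_0)$.

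The main technical step is a \emph{bracket-stability lemma}: if $y_1, y_2 \in \mc{D}(N_0,\theta_0)$ lie in the same $0$-cylinder, then $y := [y_1,y_2]$ lies in $\mc{D}(N_*,\theta_*)$ for $N_* = kN_0$ and $\theta_* = \theta_0 + K/(kN_0)$, where $K$ depends only on the H\"older data of $A$. Since $\sigma^{jN_0}(y)$ agrees symbolically with $\sigma^{jN_0}(y_2)$ in every coordinate $\ge -jN_0$, the two are $\rho_\tau$-close with error $\le e^{-jN_0\tau}$, so H\"older continuity of $A$ gives
\[
\bigl|\log\|A^{N_0}(\sigma^{jN_0}y)\|\,\|A^{N_0}(\sigma^{jN_0}y)^{-1}\| - \log\|A^{N_0}(\sigma^{jN_0}y_2)\|\,\|A^{N_0}(\sigma^{jN_0}y_2)^{-1}\|\bigr| \le C' e^{-\alpha j N_0 \tau}.
\]
Summing the geometric errors over $j$ gives a constant $K$ independent of the window length $s$, so the forward $s$-step product along $y$ is bounded by $e^{sN_0\theta_0 + K}$. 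Re-blocking into length-$kN_0$ blocks turns this into $e^{sN_*(\theta_0 + K/(kN_0))}$; choosing $k$ large keeps $\theta_* < \tau$. The backward product is handled symmetrically using $y_1 \in \mc{D}(N_0,\theta_0)$.

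To extract positive $\mu$-measure near $x$, use the continuous local product structure: on any cylinder $V \ni x$, $\mu|_V = \xi \cdot (\mu^s|_{\pi_s(V)} \otimes \mu^u|_{\pi_u(V)})$ with $\xi$ bounded away from $0$ and $\infty$. By Fubini applied to $\mu(\mc{D}(N_0,\theta_0)^c) \le \delta$, the set of ``good'' pasts $\pi$ (those admitting some $\phi$ with the bi-infinite sequence $(\pi,\phi) \in \mc{D}(N_0,\theta_0)$) and the analogous set of good futures have $\mu^s$- and $\mu^u$-measure approaching full measure as $\delta \to 0$; moreover $\pi_s(x)$ and $\pi_u(x)$ are automatically good, because $x \in \mc{D}(N_0,\theta_0)$. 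By the product decomposition, pairs $(\pi,\phi)$ with both factors good form a positive-$\mu$-measure subset of $V$, and for each such pair one has $(\pi,\phi) = [y_1,y_2]$ with $y_1, y_2 \in \mc{D}(N_0,\theta_0)$, so the bracket-stability lemma places $(\pi,\phi) \in \mc{D}(N_*,\theta_*) \cap V \subseteq U \cap \mc{D}(N_*,\theta_*)$. The main obstacle is the bracket-stability lemma: the telescoping H\"older error must be summable uniformly in $s$ and absorbable into an inflation of $\theta_0$ strictly below $\tau$. A secondary delicate point, handled via the product structure rather than naive bounds, is ensuring that for arbitrarily small cylinders $V \ni x$ the good-past and good-future sets really do meet $\pi_s(V)$ and $\pi_u(V)$ in positive $\mu^s$- and $\mu^u$-measure.
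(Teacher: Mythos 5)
Your plan is in the right spirit, and the ``bracket--stability lemma'' is a clean and essentially correct observation: the forward $\mc{D}$-condition for $[y_1,y_2]$ is inherited from $y_2$ and the backward one from $y_1$, with a summable H\"older telescoping error that is absorbed by re-blocking, just as you say. However, the ``secondary delicate point'' you flag at the end is not actually handled by your argument, and it is in fact the crux of the lemma. To show $x\in\supp(\mu|\mc{D}(N_*,\theta_*))$ you must show, for arbitrarily small cylinders $V\ni x$, that $\mu^s(\mathcal{G}_s\cap\pi_s(V))>0$ and $\mu^u(\mathcal{G}_u\cap\pi_u(V))>0$, where $\mathcal{G}_s=\pi_s(\mc{D}(N_0,\theta_0)\cap[0;x_0])$ etc. Your Fubini estimate only controls $\mu^s(\mathcal{G}_s)$ on the whole $0$-cylinder: $\mu(\mc{D}(N_0,\theta_0)^c)\le\delta$ gives $\mu(V\cap\mc{D}(N_0,\theta_0))\ge\mu(V)-\delta$, which is vacuous once $\mu(V)\le\delta$, and $\delta$ is fixed once $N_0,\theta_0$ are. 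The observation that $\pi_s(x)$ and $\pi_u(x)$ are ``automatically good'' gives membership in a single point of a (closed) set, not positive $\mu^s$- or $\mu^u$-density near that point, and the point $x$ is an arbitrary point of $\mc{D}(N,\theta)$, so no genericity or density-point argument applies to it. Concretely, $\mc{D}(N_0,\theta_0)$ could be disjoint from a punctured neighborhood of $x$ while still having measure $>1-\delta$; your bracket of good pasts and futures would then miss every small $V$.

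What is missing is the mechanism by which the slack $\tau-\theta>0$ of the orbit of $x$ itself is converted into a distortion budget for nearby trajectories. The paper describes Butler's argument as a ``delicate shadowing argument'' in which trajectories begin close to $\mc{D}(N,\theta)$ and then ``enter a set on which quasi-conformal distortion is small by the time they have separated from $\mc{D}(N,\theta)$.'' In your bracket lemma the quantity $\theta$ (the small distortion rate of $x$) never appears: $\theta_*$ is built only from $\theta_0$ and the telescoping constant, so the lemma cannot distinguish an arbitrary point of $\mc{D}(N_0,\theta_0)$ from a point of $\mc{D}(N,\theta)$, and correspondingly cannot localize to small neighborhoods of a possibly non-generic $x$. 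A correct proof must use that a point $z$ agreeing with $x$ on $|n|\le r$ accumulates distortion $\le e^{r\theta+O(1)}$ over its first $r$ steps, much less than the allowed budget $e^{r\theta_*}$, which pays for an intermediate wandering window of length comparable to $r$ before $z$ must land in $\mc{D}(N_0,\theta_0)$; this freedom is what yields positive measure near $x$. Without an argument of this kind, the positivity step is a genuine gap.
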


Before passing to the next result we need, let us comment a little on the proof of Proposition \ref{prop:butler45}, which is a delicate shadowing argument. In order to produce the points in $\mc{D}(N_*,\theta_*)$, one finds trajectories that begin very close to $\mc{D}(N,\theta)$ and that enter a set on which quasi-conformal distortion is small by time they have separated from $\mc{D}(N,\theta)$. As the trajectories Butler constructs stay close to sets where there is little quasi-conformal distortion, one can conclude that they lie in $\mc{D}(N_*,\theta_*)$ for some larger $N_*$ and $\theta_*$.

We will also use the following property.
\begin{prop}\label{prop:butler47}
\cite[Prop. 4.7]{butler2018measurable} Let $\epsilon>0$ be given. Then for every $N$ large enough we have for all $\theta>0$,
\[
\sigma(\mc{D}(N,\theta))\subseteq \mc{D}(N,\theta+\epsilon).
\]
\end{prop}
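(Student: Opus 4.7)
My plan is to prove this by a direct computation using the cocycle relation together with a crude uniform bound on $A^{\pm 1}$. Since $\Sigma$ is compact and $A$ is continuous, there is a constant $\beta > 0$ with $\|A(z)\|, \|A(z)^{-1}\| \le e^{\beta}$ for every $z \in \Sigma$, and I would like to show that $N_0 := \lceil 4\beta/\epsilon\rceil$ works.

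Set $y = \sigma(x)$ with $x \in \mc{D}(N,\theta)$, and fix $j \ge 0$. Note that $\sigma^{jN}(y) = \sigma^{jN+1}(x)$. The cocycle identity applied twice gives
\[
A(\sigma^{(j+1)N}x)\,A^N(\sigma^{jN}x) \;=\; A^{N+1}(\sigma^{jN}x) \;=\; A^N(\sigma^{jN+1}x)\,A(\sigma^{jN}x),
\]
so that
\[
A^N(\sigma^{jN}y) \;=\; A(\sigma^{(j+1)N}x)\,A^N(\sigma^{jN}x)\,A(\sigma^{jN}x)^{-1}.
\]
Taking norms and using $\|A^{\pm 1}\| \le e^\beta$ yields $\|A^N(\sigma^{jN}y)\| \le e^{2\beta}\|A^N(\sigma^{jN}x)\|$, and the identical manipulation on $A^N(\sigma^{jN}y)^{-1}$ gives $\|A^N(\sigma^{jN}y)^{-1}\| \le e^{2\beta}\|A^N(\sigma^{jN}x)^{-1}\|$. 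Multiplying and taking the product over $0 \le j \le s-1$,
\[
\prod_{j=0}^{s-1} \|A^N(\sigma^{jN}y)\|\,\|A^N(\sigma^{jN}y)^{-1}\| \;\le\; e^{4s\beta}\prod_{j=0}^{s-1} \|A^N(\sigma^{jN}x)\|\,\|A^N(\sigma^{jN}x)^{-1}\| \;\le\; e^{4s\beta}\,e^{sN\theta}.
\]
Whenever $N \ge 4\beta/\epsilon$, we have $e^{4s\beta} \le e^{sN\epsilon}$, so the right-hand side is bounded by $e^{sN(\theta+\epsilon)}$, which is the forward half of the $\mc{D}(N,\theta+\epsilon)$ condition at $y$.

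For the backward condition I would perform the mirror computation: applying the cocycle identity for $A^{-(N+1)}$ in the two orders gives
\[
A^{-N}(\sigma^{-jN+1}x) \;=\; A(\sigma^{-(j+1)N+1}x)\,A^{-N}(\sigma^{-jN}x)\,A(\sigma^{-jN+1}x)^{-1},
\]
and the same two-$e^\beta$ bound on each side produces the identical factor $e^{4s\beta}$, again absorbed by $e^{sN\epsilon}$ once $N \ge 4\beta/\epsilon$. There is no real obstacle here: the only subtlety is making sure the constant $4\beta$ obtained from shifting by a single time step is independent of $\theta$ and of $s$, which is exactly what the uniform bound on $\|A^{\pm 1}\|$ supplies. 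Consequently any $N \ge 4\beta/\epsilon$ works for every $\theta > 0$ simultaneously, which is the claim.
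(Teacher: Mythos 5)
The paper does not prove this proposition; it is imported verbatim as \cite[Prop.\ 4.7]{butler2018measurable}, so there is no in-paper argument to compare against. Your argument is the natural one and is correct: writing $y=\sigma(x)$, conjugating each block $A^N(\sigma^{jN}x)$ by a single step of the cocycle costs at most $e^{2\beta}$ in norm and $e^{2\beta}$ in norm of the inverse, so the product over $0\le j\le s-1$ picks up $e^{4s\beta}$, which is absorbed by $e^{sN\epsilon}$ once $N\ge 4\beta/\epsilon$; and the constant is uniform in $\theta$ and $s$, exactly as you observe. The only blemish is a small indexing slip in the displayed backward identity. The cocycle relation actually gives
\[
A^{-N}(\sigma^{-jN+1}x)=A(\sigma^{-(j+1)N}x)\,A^{-N}(\sigma^{-jN}x)\,A(\sigma^{-jN}x)^{-1},
\]
rather than the version you wrote with the exponents shifted by one (in your formula the outer factors do not cancel correctly against the ends of the middle word). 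This is harmless: the corrected identity has the same shape, one extra $A$ on each side, so the factor $e^{2\beta}$ per term and hence $e^{4s\beta}$ overall is unchanged, and the threshold $N\ge 4\beta/\epsilon$ stands.
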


We now prepare to prove Proposition \ref{prop:cts_on_pesin_set}, which essentially says that over a uniformity set $\mc{D}(N,\theta)$ the measurable structures preserved by the cocycle coincide with uniformly continuous structures.
The idea of the proof is to use that $\mc{D}(N,\theta)$ is contained in $\supp(\mu\vert \mc{D}(N_*,\theta_*))$ for some $N_*,\theta_*$.
There exists a full measure set $\Omega$ where the holonomies are defined, hence as holonomies are uniformly continuous on $\mc{D}(N_*,\theta_*)$ by Proposition \ref{prop:transverse_holonomy_continuity}, they give a well defined extension to $\mc{D}(N,\theta)$. The following is based on \cite[Lem. 4.6]{butler2018measurable}. We have included a full proof and restatement because the original statement and proof contain some minor oversights. For example, in the statement of \cite[Lem.~4.6]{butler2018measurable}, if $\sigma(x)\notin \mc{D}(N,\theta)$, then $\hat{\eta}_{\sigma(x)}$ need not be defined.

\begin{defn}\label{def:holonomy_invariant}
(Holonomy Invariance) For a set $X\subseteq \bigcup_{N,\theta<\tau}\mc{D}(N,\theta)$, we say that a choice of subspace $\eta\colon X\to \Gr_r(\R^n)$ is \emph{holonomy invariant} if for each $x,y\in X$, we have $H^u_{[y,x]y}H^s_{x[y,x]}\eta_x=\eta_y$ and $H^s_{[x,y]y}H^u_{x[x,y]}\eta_x=\eta_y$. (Note that these holonomies are defined because we assume $x$ and $y$ each live in some regularity block $\mc{D}(N,\theta)$, $\theta<\tau$.) We write $H^{us}_{xy}$ and $H^{su}_{xy}$ for each of these composed holonomies. We analogously speak of the holonomy invariance of any other structure.
\end{defn}

\begin{rem}\label{rem:extensions}
An important property of a set $X\subseteq \mc{D}(N,\theta)$ on which $\eta$ is holonomy invariant is that the continuous extension of $\eta$ to $\overline{X}$ is also holonomy invariant because $\overline{X}\subseteq \mc{D}(N,\theta)$ and the transverse uniform continuity of the holonomies from Proposition \ref{prop:transverse_holonomy_continuity}.
\end{rem}

Before we continue, we will show that the full measure set $\Omega$ may be taken to have the subspaces and metrics $\eta$ holonomy invariant in sense of Definition \ref{def:holonomy_invariant}.

\begin{lem}\label{lem:holonomy_invariant}
In Lemma \ref{lem:inv_on_full_measure}, the set $\Omega$ may be taken to be holonomy invariant.
\end{lem}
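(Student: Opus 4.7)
The goal is to refine the set $\Omega$ from Lemma~\ref{lem:inv_on_full_measure} (call it $\Omega_0$ for the moment) to a $\sigma$-invariant, full-measure subset on which the pointwise composed-holonomy invariance of Definition~\ref{def:holonomy_invariant} holds. The tool is a Fubini argument leveraging the local product structure of $\mu$.

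On each cylinder $[0;i]$, since $\mu\vert_{[0;i]}=\xi\cdot(\mu^s\times\mu^u)$ and $\Omega_0\cap[0;i]$ is conull, Fubini produces a full $\mu^s$-measure set $S_i$ of generic stable coordinates $s$ whose unstable slice $\{u:(s,u)\in\Omega_0\}$ is $\mu^u$-conull, and symmetrically a full $\mu^u$-measure set $U_i$. Let $\Omega$ consist of those $x\in\Omega_0$ with $\pi_s(x)\in S_{x_0}$ and $\pi_u(x)\in U_{x_0}$, intersected over all iterates under $\sigma$ to obtain $\sigma$-invariance. This $\Omega$ is still of full $\mu$-measure.

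For $x,y\in\Omega$ in the same cylinder the target identity is $H^u_{[y,x]y}H^s_{x[y,x]}\eta_x=\eta_y$. The bracket $[y,x]$ has stable coordinate $\pi_s(x)\in S_{x_0}$ and unstable coordinate $\pi_u(y)\in U_{x_0}$; one argues from these generic-coordinate conditions that $[y,x]\in\Omega_0$. Given this, Lemma~\ref{lem:inv_on_full_measure} applies twice: stable-leaf invariance along $W^s_{loc}(x)$ gives $H^s_{x[y,x]}\eta_x=\eta_{[y,x]}$, and unstable-leaf invariance along $W^u_{loc}(y)$ gives $H^u_{[y,x]y}\eta_{[y,x]}=\eta_y$; composing yields the claim. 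The $su$-composition $H^s_{[x,y]y}H^u_{x[x,y]}\eta_x=\eta_y$ is handled symmetrically via the bracket $[x,y]$, and the induced metrics on the quotients $\mc{E}_{i+1}/\mc{E}_i$ are treated identically by the same Fubini/leaf-invariance combination.

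The main obstacle is precisely the pointwise step: one cannot in general extract a full-measure rectangle from a full-measure subset of a product space, so requiring only that $\pi_s(x)\in S_{x_0}$ and $\pi_u(y)\in U_{x_0}$ does not automatically force $(\pi_s(x),\pi_u(y))\in\Omega_0$. Resolving this requires combining the generic-coordinate conditions with the $\sigma$-equivariance $H^s_{xy}=A^{-n}(\sigma^n(y))H^s_{\sigma^n(x)\sigma^n(y)}A^n(x)$ (and its unstable analogue) so as to transfer pointwise invariance from a sufficiently generic iterate back to the original pair $(x,y)$; this is the technical core of the lemma.
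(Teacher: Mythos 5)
You have put your finger on the right difficulty — a conull subset of a product need not contain a conull rectangle, so genericity of $\pi_s(x)$ and $\pi_u(y)$ does not place the bracket $[y,x]$ in $\Omega_0$ — but your proposed resolution does not close the gap. Transferring the problem to an iterate via $H^s_{xy}=A^{-n}(\sigma^n(y))H^s_{\sigma^n(x)\sigma^n(y)}A^n(x)$ cannot help: $\Omega_0$ is (or may be taken) $\sigma$-invariant and $\sigma^n([y,x])=[\sigma^n(y),\sigma^n(x)]$, so if the bracket misses $\Omega_0$ it misses it at every iterate. Equivariance relocates the bad point; it does not make it generic. As written, the "technical core" of your argument is announced but not supplied.

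The paper's proof sidesteps the bracket $[y,x]$ altogether by an approximation argument. One calls $x\in\Omega$ \emph{good} if a.e.\ point of $W^s_{loc}(x)$ lies in $\Omega$ and, for a.e.\ such point, a.e.\ point of its local unstable leaf lies in $\Omega$ (and symmetrically); your Fubini step produces exactly this full-measure set $\mc{G}$. For good $x,y$, Proposition \ref{prop:butler45} gives $N_*,\theta_*$ with $\mc{D}(N_y,\theta_y)\subseteq\supp(\mu\vert\mc{D}(N_*,\theta_*))$, so every neighborhood of $y$ meets $\Omega\cap\mc{D}(N_*,\theta_*)$ in positive measure; goodness of $x$ and $y$ then furnishes points $q_n\to y$ in $\Omega\cap\mc{D}(N_*,\theta_*)$ with \emph{both} $[q_n,x]$ and $[q_n,y]$ in $\Omega$ (this is where the Fubini genericity is actually usable, because one quantifies over a.e.\ $q_n$ rather than over the single point $[y,x]$). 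Leafwise invariance from Lemma \ref{lem:inv_on_full_measure} along the chain $x\to[q_n,x]\to q_n\to[q_n,y]\to y$ yields
\[
H^s_{[q_n,y]y}H^u_{q_n[q_n,y]}H^u_{[q_n,x]q_n}H^s_{x[q_n,x]}\eta_x=\eta_y,
\]
and the uniform transverse continuity of the holonomies on $\mc{D}(N_*,\theta_*)$ (Proposition \ref{prop:transverse_holonomy_continuity}) allows passage to the limit $q_n\to y$, giving $H^u_{[y,x]y}H^s_{x[y,x]}\eta_x=\eta_y$. These two ingredients — the support statement of Proposition \ref{prop:butler45} and the limit over approximating good points using uniform continuity on regularity blocks — are what your outline is missing.
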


\begin{proof}
What we need to show is that if $x,y$ are two points in $\Omega$, then $H^{us}_{xy}\eta_x=H^{su}_{xy}\eta_x=\eta_y$. We may always assume that $x\in \mc{D}(N_x,\theta_x)$ and similarly for $y$, as the union of all the regularity blocks for the cocycle has full measure. We will only show that $H^{su}_{xy}\eta_x=\eta_y$ as the other case is similar.

First we establish a notion of a good point in $\Omega$. An $su$-good point is a point $x\in \Omega$ such that $W^s_{loc}(x)\cap \Omega$ has full measure with respect to the conditional measure of $\mu$ on $W^s_{loc}(x)$, and such that for almost every point $y\in W^s_{loc}(x)$ with respect to the conditional measure on $W^u_{loc}(y)$ the set $\Omega\cap W^u_{loc}(y)$ has full measure. Note that these conditional measures admit a straightforward description due to the continuous product structure. If we did the saturation in the other order then we would call the point $us$-good. We say a point is \emph{good} if it is $su$- and $us$-good. We denote by $\mc{G}$ the set of all good points. Note that $\mu(\mc{G})=1$.

We claim that if $x,y\in \mc{G}\subseteq \Omega$, then $H^{su}_{xy}\eta_x=\eta_y$. To begin, we have from Proposition \ref{prop:butler45} that there exist $N_*,\theta_*$, such that $\mc{D}(N_y,\theta_y)\subseteq \supp(\mu\vert \mc{D}(N_*,\theta_*))$. In particular, as $y$ is in the support of $\mu$ this implies that there exists a sequence $A_n$ of neighborhoods of $y$ with radius going to $0$ such that $\mu(A_n)>0$. Because of the product structure of $\mu$, and because $x$ and $y$ are both good points, almost every point $q\in A_n\cap \Omega$ satisfies that $[y,q]$ and $[q,x]$ are in $\Omega$. Hence we may now pick a sequence of such points $q_n$ such that $q_n$ coverges to $y$. 
Then by the holonomy invariance of $\eta$ over $\Omega$ along local stable and unstable manifolds, we see that
\begin{align*}
H^u_{[q_n,x]q_n}H^s_{x[q_n,x]}\eta_x&=\eta_{q_n},\\
H^s_{[q_n,y]y}H^u_{q_n[q_n,y]}\eta_{q_n}&=\eta_{y}.\\
\end{align*}
Hence 
\[
H^s_{[q_n,y]y}H^u_{q_n[q_n,y]}H^u_{[q_n,x]q_n}H^s_{x[q_n,x]}\eta_x=\eta_y.
\]
Because each $q_n\in \mc{D}(N_*,\theta_*)$, these holonomies are all uniformly continuous, hence as $n\to \infty$ the previous line converges to:
\[
H^u_{[y,x]y}H^s_{x[y,x]}\eta_x=\eta_y,
\]
which is exactly what we desired. That $H^{us}_{xy}\eta_x=\eta_y$ follows similarly.

Thus we have shown that $\mc{G}\subseteq \Omega$ has the required property. To ensure that $\mc{G}$ is $\sigma$-invariant, we may replace it with $\cap_{n\in \Z} \sigma^n(\mc{G})$.
\end{proof}

The following proposition shows that we can construct a continuous version of $\eta$ over a block $\mc{D}(N,\theta)$.

\begin{prop}\label{prop:cts_on_pesin_set}
Suppose that $\Sigma$ is a transitive subshift of finite type, $\mu$ is a fully supported measure with continuous local product structure, and $A\colon \Sigma\to \GL(d,\R)$ is a H\"older continuous cocycle such that $\lambda_+(\mu)=\lambda_{-}(\mu)=0$. Suppose that $\eta$ is a measurable $A$-invariant section of $\mathbb{P}(\R^d)$.

For each $\theta<\tau$ and $N>0$, there exists a closed set $S$ and a continuous function $\hat{\eta}\colon S\to \mathbb{P}(\R^d)$ such that the following hold:

\begin{enumerate}
\item
$\mc{D}(N,\theta)\cup \sigma(\mc{D}(N,\theta))\subset S$,
\item
$\hat{\eta}=\eta$ $\mu$-a.e.~on $\mc{D}(N,\theta)\cup \sigma(\mc{D}(N,\theta))$,
\item
$\hat{\eta}$ is holonomy invariant,
\item
For $x\in \mc{D}(N,\theta)$,
\[
A(x)\eta_x=\eta_{\sigma(x)}.
\]
\end{enumerate}
The same conclusions hold if $\eta$ had instead been a measurable invariant metric on a measurable invariant subbundle of $\R^d$, i.e., the subbundle has a uniformly continuous version over $S$ and $\eta$ agrees over $S$ with a uniformly continuous metric $\mu$-a.e.
\end{prop}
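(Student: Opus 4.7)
The strategy is to show that $\eta$ is H\"older continuous on a full-measure subset of a suitably enlarged Pesin block, extend by continuity to a closed set $S$, and verify the four assertions using density.

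First, parameter setup. By combining Propositions \ref{prop:butler45} and \ref{prop:butler47}, I would pick $N_*$ large and $\theta_* < \tau$, along with an auxiliary $\theta_{*}^{-} < \theta_*$, such that: (i) $\mc{D}(N,\theta) \cup \sigma(\mc{D}(N,\theta)) \subseteq \supp(\mu|\mc{D}(N_*,\theta_*))$; (ii) $\mc{D}(N,\theta) \subseteq \supp(\mu|\mc{D}(N_*,\theta_{*}^{-}))$; and (iii) $\sigma(\mc{D}(N_*,\theta_{*}^{-})) \subseteq \mc{D}(N_*,\theta_*)$. The bookkeeping is a routine coordination of the two propositions. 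Let $\Omega$ denote the $\sigma$-invariant, holonomy-invariant, full $\mu$-measure set furnished by Lemmas \ref{lem:inv_on_full_measure} and \ref{lem:holonomy_invariant}, and set
\[
S := \overline{\Omega \cap \mc{D}(N_*,\theta_*)}.
\]
Then $S$ is closed and contains $\mc{D}(N,\theta) \cup \sigma(\mc{D}(N,\theta))$ by (i).

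The heart of the argument is H\"older continuity of $\eta$ on the dense subset $\Omega \cap \mc{D}(N_*,\theta_*)$. For $x,y$ in this set with $\rho(x,y) < 1$, one has $x_0 = y_0$, so $[x,y]$ is defined and lies in $W^u_{loc}(x) \cap W^s_{loc}(y)$. Holonomy invariance of $\eta$ on $\Omega$ (Lemma \ref{lem:holonomy_invariant}) gives
\[
\eta_y = H^s_{[x,y]\,y}\,H^u_{x\,[x,y]}(\eta_x).
\]
By the Lipschitz estimate of Proposition \ref{prop:viana}, applicable since $x,y \in \mc{D}(N_*,\theta_*)$ with $\theta_* < \tau$, each of these holonomies differs from the identity by $O(\rho(x,y))$, hence $\eta_y$ is within $O(\rho(x,y))$ of $\eta_x$ in any natural metric on $\mathbb{P}(\R^d)$.

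Define $\hat\eta$ as the unique continuous extension of $\eta|_{\Omega \cap \mc{D}(N_*,\theta_*)}$ to $S$. Properties (1) and (2) are immediate from the construction. For (3), given $x,y \in S$, approximate by sequences in the dense subset and pass to the limit in $H^{su}_{x_n y_n}\eta_{x_n} = \eta_{y_n}$, using the transverse uniform continuity of holonomies (Proposition \ref{prop:transverse_holonomy_continuity}). For (4), given $x \in \mc{D}(N,\theta)$, choose $x_n \to x$ with $x_n \in \Omega \cap \mc{D}(N_*,\theta_{*}^{-})$ using (ii); by (iii), $\sigma(x_n) \in \Omega \cap \mc{D}(N_*,\theta_*) \subseteq S$, so passing to the limit in $A(x_n)\eta_{x_n} = \eta_{\sigma(x_n)}$ (valid on $\Omega$) yields $A(x)\hat\eta_x = \hat\eta_{\sigma(x)}$ by continuity of $A$, $\sigma$, and $\hat\eta$. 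The metric version proceeds identically, since an invariant metric on an invariant subbundle also has a.e.~holonomy invariance built into $\Omega$ via Lemma \ref{lem:inv_on_full_measure}.

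The principal obstacle is the H\"older estimate: it hinges simultaneously on the a.e.~holonomy invariance of $\eta$ and the uniform Lipschitz bound on holonomies within a single Pesin block, and it is what forces the extended object to be honestly continuous rather than merely measurable. The parameter bookkeeping in the first step is technically fussy but otherwise routine.
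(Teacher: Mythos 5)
Your proof is correct and takes essentially the same approach as the paper's: both use the holonomy-invariant full-measure set $\Omega$, nest Pesin blocks via Propositions~\ref{prop:butler45} and \ref{prop:butler47}, extend $\eta$ by uniform continuity of the holonomies from $\Omega \cap \mc{D}(N_*,\cdot)$ to its closure, and verify the four properties by density. Your parameter bookkeeping (using a smaller $\theta_*^-$ versus the larger $\theta_*$) is a mirror image of the paper's ($\theta_*$ versus $\theta_* + \epsilon$), and your direct density argument for cocycle invariance is equivalent to the paper's auxiliary-function argument with $\zeta$.
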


Before we begin the proof, we describe its approach. The idea is that on a set of the form $\mc{D}(N_*,\theta_*)$, the holonomies are uniformly continuous and the further restriction of $\eta$ to $\Omega\cap \mc{D}(N_*,\theta_*)$ is holonomy invariant. Hence we can find a uniformly continuous version of $\eta$ on $\supp(\mu\vert \mc{D}( N_*,\theta_*))=\supp(\mu\vert\Omega\cap \mc{D}(N_*,\theta_*))$. As for sufficiently large $N_*,\theta_*$ this set contains $\mc{D}(N,\theta)$, we may use this set to obtain the needed extension.  Obtaining the cocycle invariance of this uniformly continuous version of $\eta$ then requires an additional argument where we choose $N_*,\theta_*$ even larger so that this set also contains $\sigma(\mc{D}(N,\theta))$.

\begin{proof}

We will construct a continuous function $\hat{\eta}$ on a set $S$ by extending from a subset of $\Omega$ on which the holonomies are uniformly continuous. To do this we will consider some nested sets $\mc{D}(N,\theta)$.

According to Lemmas \ref{lem:inv_on_full_measure} and \ref{lem:holonomy_invariant}, there exists a full measure subset $\Omega$ on which we already have holonomy invariance of $\eta$ as well as invariance under the cocycle.
To begin, $N,\theta$ are given. 
Apply Proposition \ref{prop:butler45} to obtain $N_*$ and $\theta_*$ such that
\[
\mc{D}(N,\theta)\subseteq \supp(\mu \vert \mc{D}(N_*,\theta_*)).
\]
By possibly enlarging $N_*$, and $\theta_*$, by Proposition \ref{prop:butler47} we can also ensure that 
\[
\mc{D}(N,\theta)\cup\sigma(\mc{D}(N,\theta))
\subset \supp(\mu\vert \mc{D}(N_*,\theta_*)).
\]
Now choose $0<\epsilon<\tau-\theta_*$. By Proposition \ref{prop:butler47}, and possibly replacing $N_*$ with a multiple of itself,
we may assume that
\begin{equation}\label{eqn:sigma_inv_epsilon}
\sigma(\mc{D}(N_*,\theta_*))\subset \mc{D}(N_*,\theta_*+\epsilon).
\end{equation}

We now construct the extension $\hat{\eta}$. Over the set $\Omega\cap \mc{D}(N_*,\theta_*+\epsilon)$, the holonomies are uniformly continuous, hence there exists a unique continuous extension of $\eta$ to the set $\overline{\Omega\cap \mc{D}(N_*,\theta_*+\epsilon)}$. We call this extension $\hat{\eta}$ and let $S=\overline{\Omega\cap \mc{D}(N_*,\theta_*+\epsilon)}$.

We now verify that $\hat{\eta}$ has the four properties required by the lemma:

\begin{enumerate}
\item[(1)]
The first property follows because $\overline{\Omega\cap \mc{D}(N_*,\theta_*+\epsilon)}$ contains both \\ $\supp(\mu\vert \mc{D}(N_*,\theta_*))$ and $\sigma(\supp(\mu\vert \mc{D}(N_*,\theta_*)))$ by equation \eqref{eqn:sigma_inv_epsilon}, and those two sets contain $\mc{D}(N,\theta)$ and $\sigma(\mc{D}(N,\theta))$, respectively.
\item[(2)]
The second item follows because on the set $\Omega\cap (\mc{D}(N,\theta)\cup \sigma(\mc{D}(N,\theta))$, $\hat{\eta}$ is equal to $\eta$ and $\Omega$ is a set of full measure.
\item[(3)]
That $\hat{\eta}$ is holonomy invariant (in the sense of Definition \ref{def:holonomy_invariant}), follows immediately from Remark \ref{rem:extensions}.
\item[(4)]
For the cocycle invariance of $\hat{\eta}$ over $\mc{D}(N,\theta)$, note by equation \eqref{eqn:sigma_inv_epsilon} we can define an auxiliary function $\zeta$ over the set $\supp(\mu\vert \mc{D}(N_*,\theta_*))$ by
\[
\zeta_x=A^{-1}(\sigma(x))\hat\eta_{\sigma(x)}.
\]
The function $\zeta$ is uniformly continuous. 
Now observe that on the set $\Omega\cap \mc{D}(N_*,\theta_*)$ that $\zeta_x=\eta_x=\hat\eta_x$ because $\eta$ is $A$-invariant over this set. 
Hence as $\Omega\cap \mc{D}(N_*,\theta_*)$ is dense in $\supp(\mu\vert \mc{D}(N_*,\theta_*))$, we see that $\zeta$ and $\hat{\eta}$ are equal on $\supp(\mu\vert\mc{D}(N_*,\theta_*))$ as both are continuous. 
Note that $A^{-1}(\sigma(x))\hat\eta_{\sigma(x)}$ is a continuous function defined on $\supp(\mu\vert\mc{D}(N_*,\theta_*))$ that agrees with $\zeta$ on this set. Hence for $x\in \supp(\mu\vert \mc{D}(N_*,\theta_*))$, 
\[
A^{-1}(\sigma(x))\hat\eta_{\sigma(x)}=\zeta_x=\hat\eta_x.
\]
As $\mc{D}(N,\theta)\subseteq \supp(\mu\vert \mc{D}(N_*,\theta_*))$, it follows for all $x\in \mc{D}(N,\theta)$ that
\[
A(x)\hat\eta_x=\hat{\eta}_{\sigma(x)}.
\]
\end{enumerate}

\end{proof}

From Lemma \ref{lem:piece_together_lemma} and Proposition \ref{prop:cts_on_pesin_set}, the following is immediate.
\begin{prop}
Suppose that $A,\Sigma$, and $\mu$ are as in Theorem \ref{thm:Zimmer_block_rigidity}. Then over $\mc{D}(N,\theta)$ there is a uniformly continuous invariant flag $\{\mc{E}_i\}$ that agrees with the measurable flag derived from the block structure almost everywhere. For each $0\le i<\ell$, there is a uniformly continuous metric on $\mc{E}_{i+1}/\mc{E}_i$ such that the quotient cocycle is isometric with respect to this metric.
\end{prop}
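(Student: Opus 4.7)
The plan is to apply Proposition \ref{prop:cts_on_pesin_set} inductively, handling one step of the flag at a time, and to combine the resulting structures using Lemma \ref{lem:piece_together_lemma}. For the base case, observe that $\mc{E}_1$ is a measurable $A$-invariant section of the Grassmannian bundle $\Sigma \times \Gr_{r_1}(\R^d)$, where $r_1 = \dim \mc{E}_1$. The proof of Proposition \ref{prop:cts_on_pesin_set} adapts with only notational changes when $\eta$ is a Grassmannian section rather than a projective one, since Ledrappier's invariance principle applies equally on Grassmannian bundles. This yields a uniformly continuous $A$-invariant subbundle $\hat{\mc{E}}_1$ defined on a closed set containing $\mc{D}(N,\theta)$, agreeing almost everywhere with $\mc{E}_1$. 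The measurable invariant inner product on $\mc{E}_1$, arising because the top-left diagonal block lies in the compact group $K_1$ (recall $\lambda=0$), admits a uniformly continuous invariant extension by the metric analogue of Proposition \ref{prop:cts_on_pesin_set} stated in its final sentence.

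For the inductive step, pass to the quotient cocycle $\bar{A}$ acting on $(\Sigma \times \R^d) / \hat{\mc{E}}_1$. This quotient cocycle is H\"older continuous on the set where $\hat{\mc{E}}_1$ is defined, is measurably conjugate to a Zimmer block with one fewer compact block, and hence has zero extremal Lyapunov exponents with respect to $\mu$ by Proposition \ref{prop:structure_implies_zero_exponents}. It inherits a measurable invariant flag $\mc{E}_2/\mc{E}_1 \subset \cdots \subset \mc{E}_k/\mc{E}_1$ and compatible invariant metrics on each successive quotient. Apply the same argument to $\mc{E}_2/\mc{E}_1$ inside this quotient cocycle, then iterate through all $k$ steps. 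After iterating, we obtain over $\mc{D}(N,\theta)$ uniformly continuous invariant subbundles corresponding to each $\mc{E}_{i+1}/\mc{E}_i$ of the successive quotient cocycles, each endowed with a uniformly continuous invariant metric with respect to which the quotient cocycle acts isometrically.

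Finally, Lemma \ref{lem:piece_together_lemma} applied iteratively lifts the uniformly continuous subbundles of the quotient cocycles back to a uniformly continuous nested flag $\hat{\mc{E}}_1 \subset \hat{\mc{E}}_2 \subset \cdots \subset \hat{\mc{E}}_k = \R^d$ in $\Sigma \times \R^d$ over $\mc{D}(N,\theta)$, with each $\hat{\mc{E}}_i$ $A$-invariant and agreeing with the measurable $\mc{E}_i$ almost everywhere. The main subtlety is verifying that each successive application of Proposition \ref{prop:cts_on_pesin_set} is legitimate: one must check that the quotient cocycle admits the same type of regularity blocks $\mc{D}(N,\theta)$ as the original. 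This follows because the norm and quasi-conformal distortion of the quotient cocycle are controlled by those of $A$ together with the uniform continuity of $\hat{\mc{E}}_i$ on $\mc{D}(N,\theta)$, so the parameters $N$ and $\theta$ can be adjusted by bounded factors at each stage of the induction.
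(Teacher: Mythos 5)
Your proposal is correct and follows essentially the same route as the paper, which treats this proposition as immediate from Proposition \ref{prop:cts_on_pesin_set} (applied first to $\mc{E}_1$ and then, by quotienting, to the successive steps of the flag and their metrics) together with Lemma \ref{lem:piece_together_lemma}. The subtlety you flag about regularity blocks for the quotient cocycle is handled implicitly in the paper by working with the holonomies induced on Grassmannians and quotients by the original cocycle's holonomies on $\mc{D}(N,\theta)$, rather than re-deriving blocks for the quotient cocycle.
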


\section{Results about Periodic Orbits}\label{sec:periodic_orbits}

In this section, we begin to turn information about the measure $\mu$ into information about the periodic points of $\sigma$. 
We begin with the following, which is an analog of \cite[Lemma 4.8]{butler2018measurable} and has a similar proof.

\begin{lem}
Let $\Sigma,\mu$, and $A$ be as in Theorem \ref{thm:Zimmer_block_rigidity}.
 Then:
\begin{enumerate}
\item
There are no periodic points $p$ for which the inequality
\[
0< \lambda_+(p)-\lambda_{-}(p)<\tau
\]
holds.
\item
There is a periodic point $q$ such that $\lambda_+(q)=\lambda_{-}(q)=0$.
\end{enumerate}
\end{lem}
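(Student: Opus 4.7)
The plan is to reduce both statements to the single assertion that any periodic point $p$ with $\lambda_+(p)-\lambda_-(p)<\tau$ satisfies $\lambda_+(p)=\lambda_-(p)=0$. Statement (1) is an immediate consequence, and for (2) I will close a $\mu$-generic orbit to obtain periodic points $q_n$ whose extremal exponents pinch together, and then apply the reduction.

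To prove the reduction, suppose $p$ has period $T$ and set $\beta\coloneqq\lambda_+(p)-\lambda_-(p)<\tau$. The first step is to show that for every sufficiently large multiple $N=kT$ of the period and every fixed $\theta\in(\beta,\tau)$, the entire orbit of $p$ lies in $\mc{D}(N,\theta)$. This is a Gelfand's formula calculation: $A^N(p)=(A^T(p))^k$, so $\frac{1}{N}\log\|A^N(p)\|\to \lambda_+(p)$ and $\frac{1}{N}\log\|A^{-N}(p)\|\to -\lambda_-(p)$, giving $\frac{1}{N}\log(\|A^N(p)\|\|A^{-N}(p)\|)\to \beta$. The conjugacy relation $A^N(\sigma^j p)=A^j(p)A^N(p)A^{-j}(p)$ propagates the Gelfand limit uniformly to every orbit point, and because $\sigma^{jN}(\sigma^i p)=\sigma^i p$ when $N$ is a multiple of $T$, the supremum over $s\ge 1$ in the definition of $\mc{D}(N,\theta)$ collapses to a pure power of the one-cycle bound. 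The second step invokes the proposition immediately preceding this lemma applied to $\mc{D}(N,\theta)$: over the orbit of $p$ we obtain a continuous invariant flag $\{\hat{\mc{E}}_i\}$ together with continuous metrics on each quotient $\hat{\mc{E}}_{i+1}/\hat{\mc{E}}_i$, preserved isometrically by the cocycle. Composing around the cycle, the return map $A^T(p)$ preserves the flag at $p$ and acts as an isometry on each quotient, so in a framing at $p$ adapted to these data it is an element of the Zimmer block $U_0$. Its diagonal compact-group blocks force every eigenvalue of $A^T(p)$ to have modulus one, and every Lyapunov exponent of $p$ vanishes. This establishes the reduction and hence (1).

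For (2), choose a $\mu$-generic $x$ and invoke the Anosov closing lemma for transitive subshifts of finite type to produce periodic points $q_n$ whose orbits shadow $(x,\sigma x,\ldots,\sigma^n x)$ with error tending to zero, so that the periodic measures $\mu_{q_n}$ converge weak-$*$ to $\mu$. The map $\nu\mapsto\lambda_+(\nu)$ is an infimum of weak-$*$-continuous functions of the invariant measure $\nu$, hence upper semicontinuous; similarly $\nu\mapsto\lambda_-(\nu)$ is lower semicontinuous. Combined with $\lambda_+(\mu)=\lambda_-(\mu)=0$ from Proposition \ref{prop:structure_implies_zero_exponents} and the trivial inequality $\lambda_+\ge\lambda_-$, this yields $\lambda_+(q_n)-\lambda_-(q_n)\to 0$. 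For $n$ sufficiently large the difference is $<\tau$, so the reduction supplies $\lambda_+(q_n)=\lambda_-(q_n)=0$, and $q=q_n$ is the desired periodic point.

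The main technical step is the first one: propagating the Gelfand asymptotics uniformly along the orbit of $p$ so that the whole orbit, not just $p$ itself, is captured by a single regularity block $\mc{D}(N,\theta)$ with $\theta<\tau$. Once this is in place, the continuous Zimmer block structure forces the return map into $U_0$ and the eigenvalue identification is automatic. Notably, no shadowing is required here; the subtler shadowing construction anticipated in the proof sketch is reserved for the complementary range $\lambda_+(p)-\lambda_-(p)\ge\tau$, which lies outside the scope of the present lemma.
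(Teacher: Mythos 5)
Your proof is correct, and its skeleton --- show that the whole orbit of $p$ sits in a single block $\mc{D}(N,\theta)$ with $\theta<\tau$, invoke the continuous Zimmer structure of Proposition \ref{prop:cts_on_pesin_set} over that block to put the return map $A^T(p)$ into $U_0$ in an adapted frame, and conclude that all eigenvalues have modulus one --- is the same as the paper's. You differ in two places, both legitimately. First, where the paper simply cites Butler's Lemma 4.8 for the containment of the orbit in $\mc{D}(N,\theta)$, you prove it directly; your Gelfand computation is exactly the right one, since for $N$ a multiple of the period the defining products over $s$ collapse to powers of the $s=1$ term, and the relation $A^N(\sigma^j p)=A^j(p)A^N(p)A^j(p)^{-1}$ spreads the bound over the finitely many points of the orbit. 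Second, for the existence of a periodic point with vanishing exponents the paper invokes Kalinin's periodic approximation theorem, whereas you use the closing lemma together with semicontinuity of $\nu\mapsto\lambda_{\pm}(\nu)$. In general semicontinuity gives only one-sided bounds and is strictly weaker than Kalinin's theorem, but here the squeeze $\lambda_-(q_n)\le\lambda_+(q_n)$ combined with $\limsup\lambda_+(q_n)\le\lambda_+(\mu)=0$ and $\liminf\lambda_-(q_n)\ge\lambda_-(\mu)=0$ (using Proposition \ref{prop:structure_implies_zero_exponents}) closes the gap, so your more elementary route is valid; it does quietly rely on the weak-$*$ convergence of the periodic measures $\mu_{q_n}$ to $\mu$, which is standard for orbits closing up a $\mu$-generic orbit segment but deserves a sentence.
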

\begin{proof}[Proof Sketch.]
The main claim is that if $p$ is a periodic point with $0\le \lambda_+(p)-\lambda_-(p)<\tau$,
then the orbit of $p$ lies in $\mc{D}(N,\theta)$ for some $N$ and $\theta$.
For this see \cite[Lem. 4.8]{butler2018measurable}. 

If the entire orbit of $p$ lies in $\mc{D}(N,\theta)$, then by Proposition \ref{prop:cts_on_pesin_set}, over the orbit of $p$ the cocycle has a Zimmer block structure with constant $\lambda=0$.
 This implies that $\lambda_+(p)=\lambda_-(p)=0$.

To see that there is a periodic point with $\lambda_+(p)=\lambda_-(p)=0$, apply Kalinin's periodic approximation theorem \cite[Thm. 1.4]{kalinin2011livsic} to the measure $\mu$ to produce a periodic point with $\abs{\lambda_+(p)}+\abs{\lambda_-(p)}<\tau$. Such a periodic point has $\lambda_+(p)=\lambda_-(p)=0$ by the first part of the lemma.
\end{proof}

Our goal for finishing the proof is to show that the cocycle is fiber bunched, which can be deduced from the following lemma.

\begin{lem}\label{lem:pps_have_0_exponent}
Suppose that $A\colon \Sigma\to \GL(d,\R)$ is a cocycle measurably conjugated into a Zimmer block as in Theorem \ref{thm:Zimmer_block_rigidity}, then for every periodic point $p$, $\lambda_+(p)=\lambda_-(p)=0$.
\end{lem}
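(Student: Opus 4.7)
The plan is to argue by contradiction using a shadowing construction. By the previous lemma, any periodic point $p$ satisfies either $\lambda_+(p)=\lambda_-(p)=0$ or $\lambda_+(p)-\lambda_-(p)\ge \tau$. Suppose for contradiction there is a periodic point $p$ of period $T_p$ with $\lambda_+(p)-\lambda_-(p)\ge \tau$. Let $q$ be a periodic point with $\lambda_+(q)=\lambda_-(q)=0$, which exists by the same lemma; since the orbit of $q$ is a finite set on which the extremal exponents vanish, it lies inside $\mc D(N_q,\theta_q)$ for some $N_q>0$ and $\theta_q<\tau$.

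The core step is a shadowing construction, extending Butler's, producing a new periodic point $\hat p$ with the following two properties: first, the orbit of $\hat p$ lies in $\mc D(N_*,\theta_*)$ for some $N_*$ and some $\theta_*<\tau$; and second, the itinerary of $\hat p$ contains $M$ consecutive copies of the defining word of $p$, shadowed closely enough that the condition number of $A^{MT_p}(\hat p)$ is at least $e^{MT_p\tau-C}$ for some $C$ independent of $M$. Concretely, I would use transitivity of $\sigma$ to splice together $M$ copies of the period-$T_p$ word for $p$ with $L$ copies of the period-$T_q$ word for $q$, joined by short admissible transition words, choosing $L$ very large compared to $M$ so that the long $q$-segments pull $\hat p$ into a uniformity set $\mc D(N_*,\theta_*)$ with $\theta_*<\tau$. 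This is the extension of Butler's shadowing from the conformal to the Zimmer block setting, and is the main work of Section \ref{sec:periodic_orbits}.

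Once $\hat p$ is obtained, the contradiction is immediate. Applying the defining inequality of $\mc D(N_*,\theta_*)$ with $s$ chosen so that $sN_*$ covers the first $MT_p$ iterates of $\hat p$, together with multiplicativity of condition numbers under composition, forces the condition number of $A^{MT_p}(\hat p)$ to be at most $e^{MT_p\theta_*+O(N_*)}$, where the $O(N_*)$ term accounts for boundary adjustments and is independent of $M$. On the other hand the shadowing ensures that this condition number is at least $e^{MT_p\tau-C}$ for some $C$ independent of $M$. Since $\theta_*<\tau$ and $N_*$ is fixed, taking $M$ large yields a contradiction.

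The main obstacle is producing $\hat p$ with both required properties simultaneously. Butler's original argument in the conformal case uses scalar quasi-conformal distortion estimates along the orbit, which do not directly generalize to the Zimmer block setting where nontrivial compact blocks act on the flag. The plan is to use the uniformly continuous invariant flag and the quotient metrics from Proposition \ref{prop:cts_on_pesin_set} over $\mc D(N_q,\theta_q)$ to transport adapted framings along stable and unstable holonomies onto the candidate shadowing orbit, replacing scalar distortion by distortion measured with respect to the block-adapted norms on the flag quotients. With this substitution in place, Butler's shadowing construction can be carried through, and the contradiction concludes the proof.
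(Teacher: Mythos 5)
There is a genuine gap in your final ``contradiction is immediate'' step, and it is not repairable as stated: the two bounds you compare are not actually in tension. The defining inequality of $\mc{D}(N_*,\theta_*)$ is anchored at the basepoint $\hat p$: it bounds $\prod_{j=0}^{s-1}\|A^{N_*}(\sigma^{jN_*}\hat p)\|\,\|A^{N_*}(\sigma^{jN_*}\hat p)^{-1}\|$ by $e^{sN_*\theta_*}$ for every $s$, but it gives no bound of the form $e^{(\text{window length})\theta_*}$ for a window sitting in the \emph{middle} of the orbit. In your construction the $p$-shadowing block of length $MT_p$ necessarily comes after the long $q$-shadowing block of length $LT_q$ with $L\gg M$; this ordering is forced, since if the orbit of $\hat p$ began with the $p$-block the condition number would grow at rate roughly $\tau>\theta_*$ per step from time $0$ and $\hat p$ could not lie in $\mc{D}(N_*,\theta_*)$ at all. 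Discarding the factors $\ge 1$ coming from the $q$-block only yields the upper bound $e^{(LT_q+MT_p+O(N_*))\theta_*}$ over the $p$-window, which dwarfs your lower bound $e^{MT_p\tau-C}$ precisely because $L$ must be large compared to $M$. This is the whole point of Butler's construction: the long conformal stretch buys slack so that the distorting stretch does not violate membership in the uniformity set; no contradiction can be extracted from condition numbers alone.

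The actual contradiction must come from the Zimmer block structure itself, and this is where the work of Section \ref{sec:periodic_orbits} lies. Over $\mc{D}(N,\theta)$ the flag $\mc{E}_1\subset\cdots\subset\mc{E}_k$ and the quotient metrics are uniformly continuous and uniformly comparable to the standard metric, so along the full period of $p^m$ the off-diagonal growth $\|\Pi_{\mc{E}_i^{\perp}}^{\mc{E}_i}A^{u_m}(p^m)v\|$ is \emph{uniformly bounded in $m$} for unit $v\in\mc{E}_{i+1}$. One then runs a cone argument at the bad periodic point $y$ (Lemmas \ref{lem:inv_cones} and \ref{lem:subspaces_near_center}): if some $A^{j_0}(p^m)v$ with $v\in\mc{E}_{i+1}$ entered the unstable cone of $y$ transversely to $\mc{E}_i$, this off-diagonal quantity would grow exponentially in $m$, a contradiction. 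Inductively this forces every $\mc{E}_i$, and ultimately $\mc{E}_k=\R^d$, to make arbitrarily small angle with the proper subspace $E^c(y)$ (the inverse-cocycle trick is needed to eliminate the $E^s(y)$ component), which is impossible for dimension reasons. Your sketch of ``transporting framings by holonomies to replace scalar distortion'' does not engage with this angle and projection analysis, which is the substance of the extension beyond the conformal case.
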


Lemma \ref{lem:pps_have_0_exponent} follows from an extension of the shadowing argument of Butler in \cite{butler2018measurable}. The idea of the extension is as follows and we illustrate it in the two dimensional case. Suppose we have two periodic points $x$, with $\lambda_+(x)=\lambda_-(x)=0$, and $y$, with $\lambda_+(y)-\lambda_-(y)> 0$. For simplicity, suppose both are fixed points. Butler constructs periodic points $p^m$ that first shadow $x$ and then $y$. These points are constructed so that $p^m\in \mc{D}(N,\theta)$ for some fixed $N$ and $\theta<\tau$. Hence by Proposition \ref{prop:cts_on_pesin_set} and Proposition \ref{prop:butler47}, along the orbit of $p^m$ the cocycle has block structure. Writing $u_m$ for the period of $p^m$, this implies that there is a frame $e_1,e_2$ at $p^m$ such that 
\begin{equation}\label{eqn:poincare_map_dim_2}
A^{u_m}(p^m)=\begin{bmatrix}
1 & a(m)\\
0 & 1
\end{bmatrix},
\end{equation}
for some number $a(m)$. Hence $e_1$ is carried back to itself by the return map. What we would like to know is that if we push forward $e_1$ by the cocycle, then when $\sigma^j(p^m)$ is near $y$ that $A^j(p^m)e_1$ is close to $E^s(y)$, the stable subspace of the periodic point $y$. This isn't too hard to show: near $y$, the cocycle is very close to the hyperbolic linear map $A(y)$. Near $y$ the cocycle preserves a cone $\mc{C}$ because it is close to a hyperbolic linear map. If $A^j(p^m)e_1\in \mc{C}$, then it will experience exponential growth. Keeping track carefully of norms, we obtain that by the time $e_1$ returns to $p^m$, that $\|A^{u_m}(p^m)e_1\|\ge \exp(\alpha m)$ for some $\alpha>0$. But this is impossible to have as $m\to \infty$ because $A^{u_m}(p^m)e_1=e_1$.
Thus if $A^j(p^m)e_1$ entered the cone $\mc{C}$, we would have a contradiction. 

The more difficult thing we must do is study what happens to the vector $e_2$. Our goal is the same: if $A^j(p^m)e_2$ is in $\mc{C}$, then this vector not only experiences exponential growth, but it experiences exponential growth \emph{transverse} to the $e_1$ direction. If we let $\Pi_{e_2}^{e_1}$ denote projection onto the $e_2$ subspace parallel to the $e_1$ subspace, then what we want to show is that $\|\Pi_{e_2}^{e_1}A^{u_m}(p^m)\|$ becomes unbounded.  We explain why this is reasonable. But first: let $j_0$ be the first time $\sigma^{j_0}(p^m)$ is near $y$ and let $j_1$ be the last time $\sigma^j(p^m)$ is near $y$ during its orbit. If we know that $A^{j_0}(p^m)e_2\in \mc{C}$, then we know that $A^{j_1}(p^m)e_2\in \mc{C}$ and has also experienced exponential growth and has most of its length in the direction $E^u(y)$. Hence we can consider its projection onto $e_1^{\perp}$ along the $e_1$ subspace, which we denote $\Pi_{e_1^{\perp}}^{e_1} A^{j_1}(p^m)e_2$. If at $\sigma^{j_1}(p^m)$ the vector $e_1$ is near $E^s(y)$ and $A^{j_1}(p^m)e_2$ is uniformly transverse to $e_1$, then $\|\Pi_{e_1^{\perp}}^{e_1}A^{j_1}(p^m)e_2\|$ is still exponentially large. Hence keeping track of the rest of the orbit carefully, we find that $\|\Pi_{e_2}^{e_1}A^{u_m}(p^m)e_2\|$ is growing in $m$ without bound. But this is impossible by \eqref{eqn:poincare_map_dim_2}, which implies that $\|\Pi_{e_2}^{e_1}A^{u_m}(p^m)e_2\|$ is uniformly bounded.

The thing we have not yet addressed is how we can conclude that $A^{j_1}(p^m)e_1$ is near $E^s(y)$. The idea is a trick: replace $A$ with $A^{-1}$ over the same shift $\sigma$. This new cocycle also takes values in a Zimmer block for the same measurable framing, but now the $E^s(y)$ and $E^u(y)$ subspaces are swapped. Hence at the point $j_1$ we must have that $A^{-(u_m-j_1)}(p^m)e_1$ is near $E^s(y)$ by iterating backwards and applying the same logic as in the paragraph containing equation \eqref{eqn:poincare_map_dim_2}. Thus by the previous paragraph we see that the orbit of $e_2$ makes uniformly small angle with $E^s(y)$ as well. By pushing this argument further, we can show that at the points $\sigma^{j_0}(p^m)$ and $\sigma^{j_1}(p^m)$ that every linear combination of $e_1$ and $e_2$ makes a small angle with $E^s(y)$, which is impossible.

In the case of higher dimensions and larger numbers of blocks, the proof is the same: one studies inductively block by block and shows for each term $\mc{E}_i$ that $\mc{E}_i$ stays near $E^c(y)$, the center subspace of $y$. We consider $E^c$ because in higher dimensions $y$ might not be a hyperbolic periodic point. One then obtains the same contradiction: a full dimensional subspace cannot have every vector in it make a small angle with a lower dimensional subspace.

The proof relies on the following shadowing lemma extracted from Butler's proof of his Proposition 4.1.

\begin{lem}\cite[Prop. 4.1]{butler2018measurable}\label{lem:sequence_of_points}
Suppose that $\Sigma,A$, and $\mu$ are as in Theorem \ref{thm:Zimmer_block_rigidity}. Suppose that there exists a periodic point $x$ such that $\lambda_{+}(x)=\lambda_-(x)=0$ and that there exists a periodic point $y$ such that $\lambda_+(y)-\lambda_-(y)>0$. Then for any sufficiently small $\chi>0$, there exists $N$, $\theta<\tau$, and a sequence of periodic points $p^m$ of period $u_m$ such that:
\begin{enumerate}
\item
$u_m=O(m)$;
\item
$\|A^{u_m}(p^m)\|\ge \exp(\chi m)$;
\item
$p^m$ lies in $\mc{D}(N,\theta)$.
\end{enumerate}
\end{lem}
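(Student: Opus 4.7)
My plan is to use the specification property of the transitive subshift of finite type $\sigma$ to construct periodic orbits $p^m$ that shadow the orbit of $x$ for a long segment and the orbit of $y$ for a shorter but proportional segment, and then verify the three conclusions by balancing the two lengths.

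Setting $\delta=\lambda_+(y)-\lambda_-(y)>0$, and writing $n_x,n_y$ for the periods of $x,y$, I will fix a ratio $\rho$ with $0<\rho<\tau/(3\delta)$. For each $m\ge 1$, put $b_m=m\,n_y$ and choose $a_m$ to be a multiple of $n_x$ with $b_m/u_m\le\rho$, where $u_m=a_m+b_m+O(1)$ and the $O(1)$ accounts for bounded connecting arcs supplied by transitivity of $\Sigma$. Specification then yields a periodic point $p^m$ of period $u_m$ whose orbit first shadows the orbit of $x$ for $a_m$ steps, then shadows the orbit of $y$ for $b_m$ steps, and returns. I place $p^m$ at the midpoint of its $x$-shadowing segment, so that any sub-orbit of length at most $a_m/2$ issuing from $p^m$ in either direction remains in the $x$-shadow. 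The bound $u_m=O(m)$ is then immediate.

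The heart of the argument is the uniformity claim $p^m\in\mc{D}(N,\theta)$. The $x$-shadow contributes only subexponentially to the product $\prod_{j=0}^{s-1}\|A^N(\sigma^{jN}(p^m))\|\cdot\|A^N(\sigma^{jN}(p^m))^{-1}\|$: indeed, $\lambda_\pm(x)=0$ gives a subexponential product along the true orbit of $x$, and continuity of $A$ combined with closeness of the shadow transfers this bound. The $y$-shadow contributes at most $e^{b_m\delta}$ in total, and the transitions contribute a bounded factor. Since $p^m$ sits centrally in its $x$-segment, any partial product of length $sN$ starting at $p^m$ must traverse at least $a_m/2$ steps of $x$-shadow before meeting any $y$-shadow, so the ratio of accumulated distortion to $sN$ is at most $b_m\delta/(a_m/2+b_m)+o(1)\le 2\rho\delta+o(1)$. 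I then take $\theta$ slightly above $2\rho\delta$, which is less than $\tau$ by the choice of $\rho$, and $N$ large enough to absorb the $o(1)$ error. The backward condition is identical by symmetry.

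For the norm lower bound I use that $A$ is measurably conjugate into a Zimmer block of exponent $0$, whose diagonal blocks lie in compact matrix groups and hence have determinant in the compact subgroup $\{\pm 1\}$ of $\R^\times$; continuity of $\det A$ then forces $|\det A|\equiv 1$ on $\Sigma$, so $\prod_{i=1}^d\sigma_i(A^{u_m}(p^m))=1$. The $y$-shadow contributes a block with singular-value ratio $\sigma_1/\sigma_d\ge e^{b_m\delta-o(b_m)}$, and combined with $\prod\sigma_i=1$, the elementary inequality $\sigma_1^d\ge(\sigma_1/\sigma_d)\cdot\prod\sigma_i$ forces $\sigma_1\ge e^{b_m\delta/d-o(b_m)}$. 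The $x$- and transition-contributions affect the operator norm only by a uniformly bounded factor, yielding $\|A^{u_m}(p^m)\|\ge e^{\chi m}$ for any $\chi<\delta n_y/d$ and $m$ large. The main obstacle is the quantitative shadowing controlling the partial products uniformly in $m$; this is precisely Butler's construction in \cite[Prop. 4.1]{butler2018measurable}, which I would invoke in the form stated above.
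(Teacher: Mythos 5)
Your proposal matches the paper's treatment, which is to invoke Butler's Proposition~4.1 and describe his construction: periodic points $p^m$ that shadow $x$ for a long segment and $y$ for a shorter proportional one, with the ratio tuned so that the $y$-segment's distortion is diluted below the threshold $\tau$. The one genuinely different ingredient is your route to conclusion~(2): you use the Zimmer block hypothesis to force $|\det A|\equiv 1$ on $\Sigma$ (hence $\sum_i\lambda_i(y)=0$, hence $\lambda_+(y)>0$ once $\lambda_+(y)-\lambda_-(y)>0$), which converts the singular-value spread of the $y$-segment into an outright norm lower bound. Butler's original argument does not have this hypothesis available and must get there differently; your shortcut is valid here but is a different route for that sub-claim.

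One caution about the justification of conclusion~(3): the sentence \emph{continuity of $A$ combined with closeness of the shadow transfers this bound} does not by itself carry the weight you put on it. The shadowing orbit $\sigma^j(p^m)$ separates from the true orbit of $x$ exponentially as $j$ moves away from the centre of the $x$-block, so pointwise continuity of $A$ gives no uniform control of the products $\prod_j\|A^N(\sigma^{jN}p^m)\|\|A^N(\sigma^{jN}p^m)^{-1}\|$ along the whole segment. What is actually needed is the exponential shadowing estimate (Lemma~\ref{lem:exp_shadowing_lemma}, Butler's Lemma~4.9), which bounds $\|A^n(q)\|$ for $q$ exponentially shadowing a periodic point $p$ by $Ce^{n(\lambda_+(p)+\epsilon)}$ with $C$ independent of $n$. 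You implicitly lean on this when you defer to Butler at the end, which is fine, but the intermediate reasoning as written has a gap; this is precisely the ``quantitative shadowing'' you correctly identify as the main obstacle.
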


Before proceeding we need to prove some additional estimates about the point $p^m$ and its orbit. Let us first describe the construction of the points $p^m$ and then indicate briefly why Lemma \ref{lem:sequence_of_points} holds.

Butler's construction proceeds as follows. We have a periodic point $x$ where $\lambda_+(x)=\lambda_-(x)=0$ and another periodic point $y$ with $\lambda_+(y)-\lambda_{-}(y)>0$. The idea is that we first shadow $x$ for some time, then shadow $y$, and then shadow $x$ again. This is then repeated periodically, which gives a periodic point $p^m$. By shadowing $x$ for a long time, the cocycle experiences little quasi-conformal distortion during that time, hence when the orbit comes near $y$, there is room for a little distortion without violating the inequalities defining $\mc{D}(N,\theta)$.

We now give a formal definition of $p^m$. Let $Q$ denote the $\{0,1\}$-matrix giving the valid words for $\Sigma$. Because $Q$ is irreducible, there exists some $M$ such that for all $m\ge M$, $Q^m$ has positive entries. Butler then shows that there exist two natural numbers $b$ and $c$ such that the following holds. We let $u_m=(2b+c+1)m$, which will be the period of the point $p^m$. We further assume that the period of $x$ and $y$ both divide $m$, so that $f^m(x)=x$ and $f^m(y)=y$. The word $p^m=(p^m_n)_{n\in \Z}$ is then defined by setting
\begin{align*}
p_j^m=x_j& \text{ for }-bm\le j\le bm,\\
p_j^m=y_{j-(b+1)m}& \text{ for }(b+1)m\le j\le (b+c+1)m.
\end{align*}
We define the $p^m_j$ indices where $bm<j<(b+1)m$ so that this substring connects $x_m$ to $y_0$ and is a valid word. This may always be done by the choice of $m\ge M$. Similarly, we define $p^m_j$ for $(b+c+1)m\le j\le (b+c+2)m$ so that this substring validly connects $y_{(b+c+1)m}$ to $x_0$. The remaining coordinates of $p^m$ are then defined by periodicity.

\begin{rem}\label{rem:b_and_c_large}
The numbers $b$ and $c$ appearing in the above construction are not arbitrary, however, we can assume that they are arbitrarily large. The precise place they are chosen appears in \cite{butler2018measurable} in the paragraph before equation (4.1).
\end{rem}

The following lemma will be used to approximate the cocycle along $p^m$ by the cocycle at the periodic point $y$ that $p^m$ shadows.

\begin{lem}\label{lem:inv_cones}
Suppose that $L\colon \R^d\to \R^d$ is a hyperbolic linear map of the form $A\oplus B\colon \R^k\oplus \R^{d-k}\to \R^k\oplus \R^{d-k}$ where $0<\|A^{-1}\|\le \mu^{-1}$, $\|B\|\le \lambda<\mu$.
For any $0<\sigma<1$, there exist $C,D,\epsilon_0,\ell>0$ such that for all $0<\epsilon<\epsilon_0$, any $D\epsilon\le \delta\le \epsilon_0$, and any sequence $L_1,\ldots,L_{j}$ of linear transformations of the form 
\[
L_i=\begin{bmatrix}
A+\alpha_i& \beta_i\\
\gamma_i & B+\delta_i
\end{bmatrix},\hspace{1cm}\max\{\|\alpha_i\|,\|\beta_i\|,\|\gamma_i\|,\|\delta_i\|\}\le \epsilon,
\]
the following hold:

\begin{enumerate}
\item

If we let $\mc{C}(\gamma)$ denote the cone:
\[
\{(u,v)\in \R^k\times \R^{d-k}\vert \|v\|\le \gamma\|u\| \},
\]
then the cone $\mc{C}(\delta^{-1})$ is invariant under each $L_i$.
\item
For any unit vector $v$ such that $\angle(v,\{0\}\times \R^{d-k})\ge \delta$, it follows that 
\begin{equation}
\|L_j\cdots L_1v\|\ge C(\mu-\epsilon^{1-\sigma})^j\delta.
\end{equation}
In fact, if we let $\Pi$ denote the orthogonal projection from $\R^d$ onto $\R^k\times \{0\}$, then
\begin{equation}
\|\Pi(L_j\cdots L_1v)\|\ge C(\mu-\epsilon^{1-\sigma})^j\delta.
\end{equation}
\item
If $j\ge \ell$, then any non-zero
\[
v\in L_j\cdots L_1(\mc{C}(\delta^{-1}))
\]
is uniformly transverse to $\{0\}\times \R^{d-k}$.

\end{enumerate}
\end{lem}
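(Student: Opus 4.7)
My plan is to reduce all three assertions to the evolution of the ratio $r_n := \|v_n\|/\|u_n\|$, where $(u_n,v_n) := L_n\cdots L_1(u_0,v_0)$ is decomposed according to $\R^k\oplus\R^{d-k}$. Reading off the block form of each $L_i$ together with the bounds on $\alpha_i,\beta_i,\gamma_i,\delta_i$ gives
\[
\|u_{n+1}\| \ge (\mu-\epsilon)\|u_n\| - \epsilon\|v_n\|, \qquad \|v_{n+1}\| \le \epsilon\|u_n\| + (\lambda+\epsilon)\|v_n\|,
\]
and hence the recursion $r_{n+1} \le f(r_n)$ where $f(r) := \frac{\epsilon+(\lambda+\epsilon)r}{(\mu-\epsilon)-\epsilon r}$. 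A short computation shows $f$ has an attracting fixed point $r^\star \approx \epsilon/(\mu-\lambda)$ with multiplier $\lambda/\mu + O(\epsilon)$ and a repelling fixed point $r^{\star\star} \approx (\mu-\lambda)/\epsilon$; choosing $D$ larger than $1/(\mu-\lambda)$ places the initial value $r_0 \le \delta^{-1} \le 1/(D\epsilon)$ safely inside the basin of $r^\star$.

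For (1), the invariance condition $f(\delta^{-1}) \le \delta^{-1}$ reduces, after clearing the positive denominator $(\mu-\epsilon) - \epsilon\delta^{-1}$, to $\lambda + 2\epsilon + \delta\epsilon + \epsilon/\delta \le \mu$; since $\epsilon/\delta \le 1/D$ and $\delta \le \epsilon_0$, this follows by taking $D$ large and $\epsilon_0$ small.

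For (2), the hypothesis $\angle(v,\{0\}\times\R^{d-k}) \ge \delta$ on the unit vector $v$ forces $\|u_0\| \ge \sin\delta \ge (2/\pi)\delta$ and $r_0 \le \delta^{-1}$, so $v \in \mc{C}(\delta^{-1})$ and part (1) applies to every iterate. Iterating the contraction gives $r_i \le r^\star + \delta^{-1}(\lambda/\mu + O(\epsilon))^i$, hence $\epsilon \sum_{i=0}^{n-1} r_i \le C n \epsilon^2 + C\epsilon\delta^{-1} \le Cn\epsilon + C/D$. Substituting into the telescoping lower bound
\[
\|u_n\| \ge \|u_0\| \prod_{i=0}^{n-1}(\mu - \epsilon(1+r_i)) = \|u_0\|\mu^n \prod_{i=0}^{n-1}\bigl(1 - \epsilon(1+r_i)/\mu\bigr)
\]
and using $\log(1-x) \ge -2x$ for small $x$, one obtains $\|u_n\| \ge C\delta(\mu - C'\epsilon)^n$. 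Shrinking $\epsilon_0$ so that $C'\epsilon \le \epsilon^{1-\sigma}$ for $\epsilon \le \epsilon_0$ upgrades this to $\|u_n\| \ge C\delta(\mu - \epsilon^{1-\sigma})^n$. The projection bound follows at once since $\Pi(u_n,v_n) = (u_n,0)$ has norm $\|u_n\|$.

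For (3), the same geometric decay $r_n \le r^\star + \delta^{-1}(\lambda/\mu + O(\epsilon))^n$ implies $r_n \le R_0$ for some universal $R_0$ once $n$ exceeds a threshold $\ell$, so the image lies in $\mc{C}(R_0)$ and makes angle at least $\arctan(1/R_0)$ with $\{0\}\times\R^{d-k}$. The main obstacle throughout is bookkeeping for the transient phase in which $r_n$ is still of order $\delta^{-1}$: its worst-case contribution $\epsilon\delta^{-1}$ to $\epsilon\sum r_i$ is bounded by the constant $1/D$ precisely because $\delta \ge D\epsilon$, so the transient is absorbed into the multiplicative constant $C$ and does not spoil the exponential rate; the slack between $\epsilon$ and $\epsilon^{1-\sigma}$ provides further room to cover $O(\epsilon)$ losses coming from the multiplier of $f$ at $r^\star$.
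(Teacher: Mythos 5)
Your proof is correct and follows essentially the same mechanism as the paper's: track the cone dynamics, using the recursion for the ratio $r = \|v\|/\|u\|$ with its attracting fixed point near $\epsilon/(\mu-\lambda)$, and extract per-step growth of $\|u\|$ from the block structure. Both arguments derive (1) from the same invariance inequality, (3) from the same attracting-fixed-point picture, and (2) from the same one-step lower bound $\|u'\|\ge(\mu-\epsilon(1+r))\|u\|$. You are actually somewhat more careful than the paper on (2): the paper restricts attention to the narrower cone $\mc{C}(\epsilon^{-\sigma'})$ (for some $\sigma'<\sigma$) and reads off the per-step rate there, without explicitly accounting for the transient during which the ratio decays from its initial value of order $\delta^{-1}$ down into that narrower cone. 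Your telescoping-product computation makes the key quantitative observation explicit: because $\sum_i r_i$ is dominated by $nr^\star + O(\delta^{-1})$, the cumulative loss $\epsilon\sum_i r_i$ splits into an $O(n\epsilon^2)$ contribution (absorbed into the $\epsilon^{1-\sigma}$ slack) plus a one-time $O(\epsilon\delta^{-1})=O(1/D)$ contribution (absorbed into the multiplicative constant $C$). That is exactly why the hypothesis $\delta\ge D\epsilon$ is needed, and it is the point the paper glosses over. One small imprecision: the multiplier of $f$ is not uniformly $\lambda/\mu+O(\epsilon)$ on the whole interval $[r^\star,\delta^{-1}]$; since $f'(r)$ grows with $r$ and $\delta^{-1}$ can be as large as $1/(D\epsilon)$, the uniform contraction rate is closer to $\lambda\mu/(\mu-1/D)^2 = \lambda/\mu + O(1/D) + O(\epsilon)$. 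This is still strictly less than $1$ once $D$ is large and $\epsilon_0$ small, so it does not affect the conclusion, but it is worth stating the dependence on $D$.
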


\begin{proof}
The proof of this lemma is standard: see, for instance, \cite[Lem.~6.2.10,11]{katok1997introduction}. We give it here as we need precise estimates on the cones, which are quite wide.

We begin by showing that there exists $D$ such that if $\delta>D\epsilon$ then $\mc{C}(\delta^{-1})$ is invariant under each $L_i$.
If we suppose that $(u,v)\in \mc{C}(\delta^{-1})$, then letting $L_i(u,v)=(u',v')$, we find that
\begin{align*}
\|u'\|&=\|Au+\alpha_iu+\beta_i v\|\\
&\ge \mu\|u\|-\epsilon \|u\|-\epsilon\delta^{-1}\|u\|\\
&=(\mu-\epsilon(1+\delta^{-1}))\|u\|.
\end{align*}
For $v'$ we similarly find
\begin{align*}
\|v'\|&=\|\gamma_iu+Bv+\delta_iv\|\\
& \le \epsilon\|u\|+\lambda\|v\|+\epsilon \|v\|\\
&=(\epsilon+\lambda\delta^{-1}+\epsilon\delta^{-1})\|u\|.
\end{align*}
So the cone $\mc{C}(\delta^{-1})$ is invariant if the following inequality holds:
\[
\epsilon+\lambda\delta^{-1}+\epsilon\delta^{-1}\le \delta^{-1}(\mu-\epsilon(1+\delta^{-1})).
\]
Multiplying through by $\delta$, we get
\[
\delta\epsilon+\lambda+\epsilon +\epsilon+\epsilon\delta^{-1}\le \mu,
\]
which certainly holds as long as $\epsilon_0$ is sufficiently small and $\delta$ is sufficiently big relative to $\epsilon$, which determines $D$. This establishes the invariance of the cone in (1).

We now show the second part of the conclusion. By the invariance of $\mc{C}(\delta^{-1})$, the following estimate will suffice. As before let $L_i(u,v)=(u',v')$. Fix $1>\sigma'>\sigma$. Then, for $(u,v)$ in the cone $\mc{C}(\epsilon^{-\sigma'})$ consider the estimate on $u'$ obtained above:
\begin{align*}
\|u'\|\ge(\mu-\epsilon-\epsilon\delta^{-1})\|u\|=(\mu-\epsilon-\epsilon^{1-\sigma'})\|u\|,
\end{align*}
the conclusion then follows because $\epsilon+\epsilon^{1-\sigma'}$ is lower order than $\epsilon^{1-\sigma}$ for any $0<\sigma'<\sigma$ as $\epsilon\to 0$.

We now show the final claim about vectors in $\mc{C}(\delta^{-1})$ becoming uniformly transverse. Note that if we compute in the same way as above, and let $L_i(u,v)=(u',v')$,
then for $(u,v)\in \mc{C}(\delta^{-1})$,
\[
\frac{\|v'\|}{\|u'\|}\le \frac{\epsilon+\lambda\delta^{-1}+\epsilon\delta^{-1}}{\mu-\epsilon-\epsilon\delta^{-1}}\le\frac{\epsilon+\lambda\delta^{-1}+\epsilon\delta^{-1}}{\mu-\epsilon-D^{-1}}.
\]
If we regard the right hand side of the above line as a linear map in $\delta^{-1}$, then this map has an attracting fixed point at 
\[
\delta^{-1}=\frac{\epsilon}{\mu-\lambda-2\epsilon-D^{-1}}.
\]
Hence if $j$ is sufficiently large then $L_j\cdots L_1\mc{C}(\delta^{-1})$ is uniformly transverse to $\{0\}\times \R^{d-k}$.
\end{proof}

We now record another abstract lemma for cocycles with an invariant bundle along an orbit. We write $\Pi_V^W$ for the projection onto $V$ parallel to a complementary subspace $W$.

\begin{lem}\label{lem:general_angle_decay}
Suppose that $\Omega$ is a compact metric space, $\sigma\colon \Omega\to \Omega$ is continuous, and $A\colon \Omega \to \GL(d,\R)$ defines a continuous linear cocycle. Then there exists $\lambda>0$ such that if $p\in \Omega$ and $A$ leaves invariant a bundle $\mc{E}(\sigma^j(p))\subseteq\R^d$ along an orbit segment of $p$ of length $n$, then for any vector $v$, we have
\[
\|\Pi_{\mc{E}^{\perp}}^{\mc{E}} A^n(p)v\|\ge e^{-n\lambda}\|\Pi_{\mc{E}^{\perp}}^{\mc{E}}v\|.
\]
\end{lem}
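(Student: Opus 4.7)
The plan is to recognize that the quantity $\Pi_{\mc{E}^{\perp}}^{\mc{E}} A^n(p) v$ depends only on the component of $v$ in $\mc{E}(p)^{\perp}$, and to identify its evolution with that of the induced quotient cocycle on $\R^d/\mc{E}$. Since $\mc{E}$ and $\mc{E}^{\perp}$ are orthogonal complements, the projection $\Pi_{\mc{E}^{\perp}}^{\mc{E}}$ is simply orthogonal projection $P_{\mc{E}^{\perp}}$, and orthogonal projection supplies an isometric identification of $\R^d/\mc{E}(q)$ with $\mc{E}(q)^{\perp}$.

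First I would set $\lambda = \sup_{q\in \Omega} \log \|A(q)^{-1}\|$, which is finite because $\Omega$ is compact and $A$ is continuous into $\GL(d,\R)$. Next, along the orbit of $p$, define the identified quotient cocycle
\[
\widetilde{A}(q)\colon \mc{E}(q)^{\perp}\to \mc{E}(\sigma q)^{\perp},\qquad \widetilde{A}(q)w=P_{\mc{E}(\sigma q)^{\perp}}A(q)w.
\]
Because $A(q)^{-1}$ also preserves $\mc{E}$, a short verification shows $\widetilde{A}(q)^{-1}w'=P_{\mc{E}(q)^{\perp}}A(q)^{-1}w'$ for $w'\in \mc{E}(\sigma q)^{\perp}$. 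Indeed, writing $A(q)^{-1}w'=u_1+u_2$ with $u_1\in \mc{E}(q)$ and $u_2\in \mc{E}(q)^{\perp}$, invariance of $\mc{E}$ under $A$ gives $A(q)u_1\in \mc{E}(\sigma q)$, so $P_{\mc{E}(\sigma q)^{\perp}}A(q)u_2=w'$. Since orthogonal projections are norm-decreasing, this yields
\[
\|\widetilde{A}(q)^{-1}\|\le \|A(q)^{-1}\|\le e^{\lambda}.
\]

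Next, I would prove by induction on $n$ that for every $v\in \R^d$,
\[
\Pi_{\mc{E}^{\perp}}^{\mc{E}}A^n(p)v=\widetilde{A}^n(p)\,\Pi_{\mc{E}^{\perp}}^{\mc{E}}v.
\]
The inductive step uses that if we split $A^n(p)v=u_1+u_2$ with $u_1\in \mc{E}(\sigma^n p)$ and $u_2\in \mc{E}(\sigma^n p)^{\perp}$, then by $A$-invariance of $\mc{E}$ the piece $A(\sigma^n p)u_1$ lies in $\mc{E}(\sigma^{n+1}p)$ and hence projects to $0$ in $\mc{E}(\sigma^{n+1}p)^{\perp}$, so only $u_2=\widetilde{A}^n(p)\Pi_{\mc{E}^{\perp}}^{\mc{E}}v$ survives under the next projection, and by definition applying $P_{\mc{E}(\sigma^{n+1}p)^{\perp}}A(\sigma^n p)$ to $u_2$ is $\widetilde{A}(\sigma^n p)u_2$.

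Finally, chaining the single-step bound gives $\|\widetilde{A}^n(p)^{-1}\|\le e^{n\lambda}$, and therefore $\|\widetilde{A}^n(p)w\|\ge e^{-n\lambda}\|w\|$ for every $w\in \mc{E}(p)^{\perp}$. Applying this with $w=\Pi_{\mc{E}^{\perp}}^{\mc{E}}v$ and substituting the intertwining identity proves the claimed lower bound. I do not anticipate any genuine obstacle: the only point that might initially look problematic is uniformity of the inverse-norm bound over the (a priori arbitrary) invariant subspace $\mc{E}$, but the orthogonal identification of quotients makes this bound inherit directly from $\|A(q)^{-1}\|$, which is uniformly bounded on the compact base $\Omega$.
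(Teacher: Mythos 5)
Your proof is correct, but it runs along a different track from the one in the paper. You identify $\Pi_{\mc{E}^{\perp}}^{\mc{E}}A^n(p)v$ with the image of $\Pi_{\mc{E}^{\perp}}^{\mc{E}}v$ under the quotient cocycle $\widetilde{A}$ (realized on $\mc{E}^{\perp}$ via orthogonal projection), verify that $\widetilde{A}(q)^{-1}=P_{\mc{E}(q)^{\perp}}A(q)^{-1}\vert_{\mc{E}(\sigma q)^{\perp}}$, and conclude from $\|\widetilde{A}(q)^{-1}\|\le\|A(q)^{-1}\|$; all of these steps check out (the verification of the inverse formula and the intertwining induction both use the invariance of $\mc{E}$ exactly where they should). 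The paper instead factors the quantity as (length of $A^n(p)v$) times (sine of the angle between $A^n(p)v$ and $\mc{E}$) and bounds each factor separately: lengths contract at most exponentially, and the angle decays at most exponentially because the induced action on $\RP^{d-1}$ is Lipschitz with constant controlled by $\|A\|\|A^{-1}\|$ (Lemma \ref{lem:lipschitz_constant}). Your route avoids the projective-space Lipschitz estimate entirely and yields the cleaner constant $\lambda=\sup_q\log\|A(q)^{-1}\|$; the paper's angle formulation is the one that gets reused directly in the shadowing estimates of Lemma \ref{lem:angles_near_conformal}, which is presumably why it is phrased that way. The only cosmetic point is that your $\lambda$ could in principle be nonpositive (e.g. if $A$ is uniformly expanding), whereas the statement asks for $\lambda>0$; since the inequality only weakens as $\lambda$ grows, replacing $\lambda$ by $\max\{1,\sup_q\log\|A(q)^{-1}\|\}$ fixes this trivially.
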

\begin{proof}
The angle between $A^nv$ and $\mc{E}$ can decay at most exponentially fast because the action induced by $A$ on $\RP^{d-1}$ is Lipschitz. As vectors contract in length at most exponentially fast, the conclusion follows.
\end{proof}

We now record a basic lemma:

\begin{lem}\label{lem:lipschitz_constant}
For each $d\in \N$ there exists a constant $C$ such that if $A\in \GL(d,\R)$, the Lipschitz constant of the induced action of $A$ on $\RP^{d-1}$ is at most $C\|A\|\|A^{-1}\|$.
\end{lem}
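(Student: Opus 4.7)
The plan is to reduce to the sphere $S^{d-1}$ endowed with the round Riemannian metric, and to equip $\RP^{d-1}$ with the quotient metric under the antipodal identification. Since the quotient map $\pi\colon S^{d-1}\to \RP^{d-1}$ is a local isometry, the induced action $\hat A$ of $A$ on $\RP^{d-1}$ has the same Lipschitz constant as its lift $\bar A\colon S^{d-1}\to S^{d-1}$, $\bar A(x)=Ax/\|Ax\|$, and it suffices to bound the Lipschitz constant of the latter.

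First, I would compute the derivative of $\bar A$ at $x\in S^{d-1}$. Identifying $T_xS^{d-1}$ with $x^{\perp}$, for $v\in x^{\perp}$ one obtains
\[
D\bar A_x(v)=\frac{Av}{\|Ax\|}-\frac{\langle Av,Ax\rangle}{\|Ax\|^3}Ax,
\]
which is simply the orthogonal projection of $Av/\|Ax\|$ onto $(Ax)^{\perp}=T_{\bar A(x)}S^{d-1}$. Consequently
\[
\|D\bar A_x(v)\|\le \frac{\|Av\|}{\|Ax\|}\le \frac{\|A\|\cdot \|v\|}{\|Ax\|}\le \|A\|\|A^{-1}\|\cdot \|v\|,
\]
using $\|Ax\|\ge \|A^{-1}\|^{-1}$ for unit $x$. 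Thus the operator norm of $D\bar A_x$ is at most $\|A\|\|A^{-1}\|$ uniformly in $x$.

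Since $S^{d-1}$ is a connected, complete Riemannian manifold, this pointwise derivative estimate yields $\Lip(\bar A)\le \|A\|\|A^{-1}\|$ with respect to the geodesic metric, and descending through the local isometry $\pi$ gives $\Lip(\hat A)\le \|A\|\|A^{-1}\|$. Any other standard metric on $\RP^{d-1}$ (e.g. the sine/chordal metric) is bi-Lipschitz equivalent to the quotient Riemannian metric with a constant depending only on $d$, which can be absorbed into $C$. No single step is the main obstacle; the only point requiring care is that the projection from the round sphere to $\RP^{d-1}$ is a local isometry, so the estimate survives the quotient.
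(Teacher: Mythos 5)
Your proof is correct and follows the same approach the paper briefly sketches: pass to the double cover $S^{d-1}\to\RP^{d-1}$ and exploit the linearity of $A$, here made precise through a derivative estimate for $x\mapsto Ax/\|Ax\|$. The paper gives only a one-line remark pointing at this argument, and your computation of $D\bar A_x$ as the projection of $Av/\|Ax\|$ onto $(Ax)^\perp$ together with the lower bound $\|Ax\|\ge\|A^{-1}\|^{-1}$ supplies exactly the details it omits.
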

The proof of the above lemma is straightforward if one thinks of the double cover of $\RP^{d-1}$ by $S^{d-1}\subset\R^d$ and notes that the action of $A$ is linear. The constant $C$ is present to accomodate the reader's preferred choice of metric on $\RP^{d-1}$.

We now record a shadowing lemma that will be used below.

\begin{lem}\label{lem:exp_shadowing_lemma}
\cite[Lem.~4.9]{butler2018measurable}
Let $p\in \Sigma$ be a periodic point for $\sigma$ with extremal Lyapunov exponent $\lambda\coloneqq \lambda_+(p)$, and let $\epsilon>0$ be given. Then there is a constant $C\ge 1$ independent of $n$ such that for every $n\ge 1$, if $q\in \Sigma$ satisfies
\[
d(\sigma^j(p),\sigma^j(q))\le \max\{e^{-j\tau},e^{-(n-j)\tau}\}, \,\,\,\,0\le j\le n,
\]
then
\[
C^{-1}e^{n(\lambda-\epsilon)}\le \|A^n(q)\|\le Ce^{n(\lambda+\epsilon)}.
\]
\end{lem}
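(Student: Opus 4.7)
The plan is to compare $A^n(q)$ and $A^n(p)$ by combining the H\"older regularity of the generator with the tent-shaped shadowing. Writing $\alpha$ for the H\"older exponent in $\rho_\tau$ and $H$ for the H\"older constant, the two ingredients give
\[
\|A(\sigma^j q)-A(\sigma^j p)\|\le H\,\max\bigl(e^{-\alpha j\tau},\,e^{-\alpha(n-j)\tau}\bigr),
\]
and the crucial feature is that $\sum_{j=0}^{n-1}\max(e^{-\alpha j\tau},e^{-\alpha(n-j)\tau})$ is bounded by a geometric constant independent of $n$. Letting $k_0$ be the period of $p$, the matrix $A^{k_0}(p)$ has spectral radius $e^{k_0\lambda}$, so there is $C_p\ge 1$ with $C_p^{-1}e^{n\lambda}\le\|A^n(p)\|\le C_p\,n^d\,e^{n\lambda}$ for all $n\ge 0$; any Jordan-type polynomial factor is absorbed into the chosen $e^{n\epsilon}$.

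For the upper bound I would use the telescoping identity
\[
A^n(q)-A^n(p)=\sum_{j=0}^{n-1}A^{n-j-1}(\sigma^{j+1}q)\bigl(A(\sigma^j q)-A(\sigma^j p)\bigr)A^j(p)
\]
and proceed by induction on $n$. The shadowing hypothesis passes to $\sigma^{j+1}q$ shadowing $\sigma^{j+1}p$ over length $n-j-1$ with a sharper tent, so an inductive hypothesis of the form $\|A^m(q')\|\le Ce^{m(\lambda+\epsilon)}$ for $m<n$, combined with the bounded error sum, yields $\|A^n(q)-A^n(p)\|\le C''e^{n(\lambda+\epsilon)}$ and hence $\|A^n(q)\|\le Ce^{n(\lambda+\epsilon)}$ once the constants are chosen to close the induction.

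For the lower bound the same additive estimate is insufficient: the telescoped error is of order $e^{n(\lambda+\epsilon/2)}$, which dominates $\|A^n(p)\|\asymp e^{n\lambda}$, so neither naive subtraction nor Weyl's perturbation of singular values can produce the bound. Instead I would use a cone-invariance argument: fix $v_0$ in the top generalized eigenspace of $A^{k_0}(p)$ corresponding to an eigenvalue of modulus $e^{k_0\lambda}$, let $\mathcal{C}$ be a narrow cone about this subspace on which $A^{k_0}(p)$ expands by at least $e^{k_0(\lambda-\epsilon/4)}$, and observe that when $\sigma^{jk_0}q$ is sufficiently close to $p$ the return $A^{k_0}(\sigma^{jk_0}q)$ is a small perturbation of $A^{k_0}(p)$ that preserves a slightly widened cone with expansion at least $e^{k_0(\lambda-\epsilon/2)}$. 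The tent-shaped shadowing forces this closeness for all but a bounded number of end-blocks, whose cumulative distortion contributes only a multiplicative constant, yielding $\|A^n(q)v_0\|\ge C^{-1}e^{n(\lambda-\epsilon)}$ and hence the lower bound on the operator norm.

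The main obstacle is the lower bound. The cone construction is delicate when the top eigenvalue of $A^{k_0}(p)$ has nontrivial Jordan structure, in which case the cone must be taken about the full generalized eigenspace and the polynomial Jordan factors must be tracked and absorbed into $\epsilon$. By contrast, the upper bound reduces to a routine Gronwall/bootstrap computation once the summability of the H\"older-shadowing error terms has been noted.
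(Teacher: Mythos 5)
Your proposal correctly identifies the two halves of the estimate as requiring different techniques, and your cone construction for the lower bound is essentially the right idea (the places where you acknowledge delicacy — Jordan blocks, passing to the generalized eigenspace, replacing $A^{k_0}(p)$ by a high power $A^{Nk_0}(p)$ to absorb polynomial factors — are exactly the places that need care, and they can be handled). However, the upper bound argument as written contains a real gap: the induction does not close.

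To see the problem, carry out the estimate you describe. With the inductive hypothesis $\|A^m(q')\|\le Ce^{m(\lambda+\epsilon)}$ for $m<n$ and the bound $\|A^j(p)\|\le C_p e^{j(\lambda+\epsilon')}$ with $\epsilon'<\epsilon$, the $j$-th term in the telescoping sum is at most
\[
C e^{(n-j-1)(\lambda+\epsilon)}\cdot H\max\bigl(e^{-\alpha j\tau},e^{-\alpha(n-j)\tau}\bigr)\cdot C_p e^{j(\lambda+\epsilon')},
\]
which equals $C K_j\, e^{n(\lambda+\epsilon)}$ with $K_j = H C_p e^{-\lambda-\epsilon}\,e^{-j(\epsilon-\epsilon')}\max(e^{-\alpha j\tau},e^{-\alpha(n-j)\tau})$. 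Summing over $j$ gives $\|A^n(q)-A^n(p)\|\le C K' e^{n(\lambda+\epsilon)}$ with $K'=\sum_j K_j$. This sum is indeed bounded independently of $n$, but crucially it is not small: the $j=0$ term alone contributes $H C_p e^{-\lambda-\epsilon}$, since the shadowing tent gives no decay at the endpoint ($\max(1,e^{-n\alpha\tau})=1$). Closing the induction requires $C_p + C K'\le C$, i.e.\ $K'<1$, and there is no reason for $H C_p e^{-\lambda-\epsilon}<1$ to hold — the H\"older constant $H$ of $A$ has no relation to $e^{\lambda}$. Rescaling $A$ by $e^{-\lambda}$ or coarse-graining the generator to $A^N$ does not rescue this, and attempts to estimate $A^n(p)^{-1}A^n(q)$ instead bring in $\|A^n(p)^{-1}\|\asymp e^{-n\lambda_-(p)}$, which reintroduces exactly the fiber-bunching hypothesis $\lambda_+(p)-\lambda_-(p)$ small that the lemma is meant to dispense with.

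The upper bound should instead be proved by a block decomposition, not a telescoped perturbation series. Fix a large block length $L$ (a multiple of the period of $p$) and a threshold $L'$. For an index $i$ with $iL\ge L'$ and $n-(i+1)L\ge L'$, the tent condition forces $d(\sigma^{iL+s}q,\sigma^{iL+s}p)\le e^{-L'\tau}$ for all $0\le s<L$, hence $\|A^L(\sigma^{iL}q)-A^L(\sigma^{iL}p)\|\le L\,M^{L-1}H e^{-\alpha L'\tau}$. Since the orbit of $p$ is finite and $\lambda_+(p)=\lambda$, one may choose $L$ so that $\|A^L(\sigma^{iL}p)\|\le e^{L(\lambda+\epsilon/2)}$, and then $L'$ so that the H\"older error is dominated and $\|A^L(\sigma^{iL}q)\|\le e^{L(\lambda+\epsilon)}$ for all such middle $i$. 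There are at most a bounded number (independent of $n$) of boundary blocks, each of norm at most $M^L$. Submultiplicativity $\|A^n(q)\|\le\prod_i\|A^L(\sigma^{iL}q)\|$ then gives $\|A^n(q)\|\le C e^{n(\lambda+\epsilon)}$ directly, with $C$ absorbing the boundary blocks — no induction is needed and no smallness of $H C_p e^{-\lambda}$ is required. Your lower-bound cone argument fits naturally into this same block framework: the middle blocks preserve a slightly widened cone about the top Lyapunov subspace of $A^L(p)$ and expand it by at least $e^{L(\lambda-\epsilon)}$, while the boundary blocks contribute only a bounded multiplicative distortion.
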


We use the above lemma to deduce the following.

\begin{lem}\label{lem:angles_near_conformal}
Suppose that $A\colon \Sigma\to \GL(d,\R)$ is a cocycle as in the statement of Theorem \ref{thm:Zimmer_block_rigidity} and that $p$ is a periodic point such that $\lambda_+(p)=\lambda_-(p)=0$. Let $\epsilon>0$ be given. There exists $C>0$ such that if $z$ is a point satisfying
\begin{equation}\label{eqn:exp_shadowing}
d(\sigma^j(p),\sigma^j(z))\le \max\{e^{-j\tau},e^{-(n-j)\tau}\}, \,\,\,\,0\le j\le n,
\end{equation}
and there exists an $A$-invariant subbundle $\mc{E}$ along the orbit segment $z,\ldots, \sigma^n(z)$, then for any vector $v$ 
\[
\|\Pi_{\mc{E}^{\perp}}^{\mc{E}}A^j(z)v\|\ge Ce^{-4\epsilon j}\|\Pi_{\mc{E}^{\perp}}^{\mc{E}}v\|, \text{ for }0\le j \le n.
\]
\end{lem}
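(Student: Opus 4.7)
The plan is to control the forward and inverse cocycle norms $\|A^j(z)\|$ and $\|A^j(z)^{-1}\|$ along the orbit segment using the shadowing hypothesis, and then to combine these bounds with the bi-Lipschitz action on $\RP^{d-1}$ from Lemma \ref{lem:lipschitz_constant} together with the $A$-invariance of $\mc E$.

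First I would fix $0\le j\le n$. Since $n\ge j$, we have $e^{-(n-i)\tau}\le e^{-(j-i)\tau}$ for $0\le i\le j$, so \eqref{eqn:exp_shadowing} implies
\[
d(\sigma^i(z),\sigma^i(p))\le \max\{e^{-i\tau},e^{-(j-i)\tau}\},\qquad 0\le i\le j.
\]
This is exactly the shadowing hypothesis of Lemma \ref{lem:exp_shadowing_lemma} with its $n$ replaced by $j$, so $\lambda_+(p)=0$ yields $\|A^j(z)\|\le Ce^{j\epsilon}$.

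Second, I would obtain the dual bound $\|A^j(z)^{-1}\|\le Ce^{j\epsilon}$ by running the same argument for the reversed cocycle $\tilde A(x)\coloneqq A(\sigma^{-1}x)^{-1}$ over $\tilde\sigma\coloneqq \sigma^{-1}$. Here $\tilde\sigma$ is again a transitive subshift of finite type (with the transpose transition matrix) with the same invariant measure $\mu$, and $p$ is periodic for $\tilde\sigma$ with $\lambda_+(p,\tilde A)=-\lambda_-(p,A)=0$. Under the reindexing $k=j-i$, the condition for $\sigma^j(z)$ to shadow $\sigma^j(p)$ under $\tilde\sigma$ for time $j$ becomes $d(\sigma^k(z),\sigma^k(p))\le \max\{e^{-(j-k)\tau},e^{-k\tau}\}$ for $0\le k\le j$, which is exactly what was verified in step one. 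Lemma \ref{lem:exp_shadowing_lemma} therefore gives $\|\tilde A^j(\sigma^j(z))\|\le Ce^{j\epsilon}$; noting $\tilde A^j(\sigma^j(z))=A^j(z)^{-1}$ completes this step.

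With both norm bounds in hand, Lemma \ref{lem:lipschitz_constant} gives a bi-Lipschitz constant of $Ce^{2j\epsilon}$ for the induced projective action of $A^j(z)$. Since $\mc E$ is $A$-invariant along the orbit, $A^j(z)$ sends $[\mc E(z)]$ to $[\mc E(\sigma^j z)]$, so the inverse Lipschitz direction yields
\[
\sin\angle(A^j(z)v,\,\mc E(\sigma^j z))\ge C^{-1}e^{-2j\epsilon}\sin\angle(v,\mc E(z)).
\]
Combining with the identity $\|\Pi_{\mc E^\perp}^{\mc E}w\|=\|w\|\sin\angle(w,\mc E)$ and the lower bound $\|A^j(z)v\|\ge \|A^j(z)^{-1}\|^{-1}\|v\|\ge C^{-1}e^{-j\epsilon}\|v\|$ from step two gives the claim with exponent $3j\epsilon$. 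Starting the shadowing lemma with a slightly smaller value of $\epsilon$ absorbs the multiplicative constant into the exponent $4j\epsilon$ stated in the lemma.

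The main obstacle I anticipate is step two: confirming that Lemma \ref{lem:exp_shadowing_lemma} genuinely applies to the reversed dynamics, and verifying that the index reversal $i\mapsto j-i$ recasts the forward shadowing hypothesis into the backward shadowing form required by the lemma. Once this bookkeeping is settled, the remainder is a routine application of the projective bi-Lipschitz estimate together with the invariance of $\mc E$.
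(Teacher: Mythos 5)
Your proof is correct and follows the same route as the paper: reduce the shadowing hypothesis from length $n$ to length $j$, invoke Lemma \ref{lem:exp_shadowing_lemma} to bound both $\|A^j(z)\|$ and $\|A^j(z)^{-1}\|$ by $Ce^{j\epsilon}$, and then combine the projective Lipschitz estimate from Lemma \ref{lem:lipschitz_constant} with the invariance of $\mc{E}$. Your step two simply makes explicit the inverse-norm bound that the paper's terse citation of Lemma \ref{lem:exp_shadowing_lemma} leaves implicit; as an aside, one can avoid time reversal by applying that lemma to the contragredient cocycle $\hat{A}(x)\coloneqq (A(x)^T)^{-1}$ over the same shift $\sigma$, since $\|\hat{A}^j(z)\|=\|A^j(z)^{-1}\|$ and $\lambda_+(p,\hat{A})=-\lambda_-(p,A)=0$.
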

\begin{proof}
Note that if $z$ satisfies \eqref{eqn:exp_shadowing} for $n$ then it also satisfies \eqref{eqn:exp_shadowing} for each $0\le n'\le n$ with $n'$ in the place of $n$.
Hence we may apply Lemma \ref{lem:exp_shadowing_lemma} to the the cocycle $A$ for  $0\le j\le n$, which gives that
\[
\max\{\|A^j(z)\|,\|(A^j(z))^{-1}\|\}\le Ce^{j\epsilon}.
\]
Thus we see by Lemma \ref{lem:lipschitz_constant}, that the angle between $A^j(z)v$ and $\mc{E}(\sigma^j(z))$ is at least $Ce^{-2j\epsilon}$ times the size of the angle between $v$ and $\mc{E}(z)$. Hence as the norm of $A^j(z)$ is bounded and cobounded by $Ce^{j\epsilon}$ the conclusion follows.
\end{proof}

We now record a lemma about linear algebra that is used below.

\begin{lem}\label{lem:proj_length}
Suppose $W$ is a subspace of $\R^d$. There exists $C>0$ such that if $V$ is any complement to $W$ and $q$ is uniformly transverse to $W$, then
\[
\|\Pi_{V}^Wq\|\ge C\|q\|.
\]
\end{lem}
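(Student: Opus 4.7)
The key observation is that the constant $C$ in fact only depends on the uniform transversality of $q$ to $W$ and not on the choice of complement $V$. The plan is to route the estimate through the orthogonal projection onto $W^\perp$, which bypasses any issues that would arise from $V$ being nearly aligned with $W$.

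Concretely, let $\theta>0$ be the uniform angle bound, so $\angle(q,W)\ge \theta$. Decompose $q = q_V + q_W$ with $q_V\in V$ and $q_W\in W$, so that $\Pi_V^W q = q_V$. Let $P\colon \R^d\to W^\perp$ denote the orthogonal projection. Then $P(q_W)=0$, so
\[
P(q) = P(q_V).
\]
The assumption $\angle(q,W)\ge \theta$ gives $\|P(q)\|\ge \sin(\theta)\|q\|$, while $P$ has operator norm $1$, so $\|q_V\|\ge \|P(q_V)\|$. Chaining these inequalities yields
\[
\|\Pi_V^W q\| = \|q_V\| \ge \|P(q_V)\| = \|P(q)\| \ge \sin(\theta)\|q\|,
\]
so the lemma holds with $C = \sin(\theta)$, independently of the complement $V$.

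There is no real obstacle here; the only subtle point is recognizing that although $\Pi_V^W$ can have arbitrarily large norm as $V$ degenerates toward $W$, the \emph{lower} bound on $\|\Pi_V^W q\|$ needs no information about $V$ once we know $q$ stays uniformly away from $W$, because the orthogonal projection onto $W^\perp$ factors through any oblique projection parallel to $W$.
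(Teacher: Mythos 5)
Your proof is correct, and it is essentially the paper's argument: both reduce the oblique projection to the orthogonal projection onto $W^\perp$ by noting that $\Pi_V^W q$ differs from $q$ by an element of $W$, so its norm is bounded below by $\|P(q)\|\ge\sin(\theta)\|q\|$. The paper phrases this as "$\|\Pi_V^W q\|$ is minimized at $V=W^\perp$," which is the same observation as your factorization $P=P\circ\Pi_V^W$.
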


\begin{proof}
The left hand side of the above inequality is minimized when $V=W^{\perp}$ because orthogonal projection onto $W$ minimizes the distance from $q$ to $W$. For $V=W^{\perp}$, the conclusion is immediate from transversality.
\end{proof}

When we refer to the ``angle" between two subspaces $V$ and $W$, we mean the smallest angle between a non-zero vector in each. For a periodic point $p$ of period $k$, we denote by $E^u(p),E^s(p)$, and $E^c(p)$, the stable, unstable, and center subspaces of its return map $A^k(p)$. Note that some of these subspaces may be trivial.

\begin{lem}\label{lem:subspaces_near_center}
Let $A,\Sigma$ and $\mu$ be as in Theorem \ref{thm:Zimmer_block_rigidity}. Let $\{0\}=\mc{E}_0\subset \mc{E}_1\subset \mc{E}_2\subset \cdots \subset \mc{E}_k=\R^d$ be the measurable flag given by the Zimmer block structure.
 There exists a choice of constants $b,c$ in Butler's construction and a small $\alpha>0$ such that if we define for each $m$:
\[
j_0=\lceil (1+\alpha)(b+1)m\rceil\text{ and }j_1=\lfloor(1-\alpha)(b+c+1)m\rfloor,
\]
then for all $\epsilon>0$ and for all $1\le i\le k$, there exists $M$ such that for all $m>M$, the bundles $\mc{E}_i(\sigma^{j_0}(p^m))$ and $\mc{E}_i(\sigma^{j_1}(p^m))$ both make angle at most $\epsilon$ with $E^c(p)$.
\end{lem}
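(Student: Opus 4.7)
The goal is to show that every unit vector in $\mc{E}_i(\sigma^{j_0}(p^m))$ has asymptotically vanishing projection onto both $E^u(y)$ and $E^s(y)$, and hence lies close to $E^c(y)$; the argument at $\sigma^{j_1}(p^m)$ is essentially symmetric. Here and throughout the proof, $y$ denotes the periodic point with $\lambda_+(y)-\lambda_-(y)>0$ shadowed by $p^m$ during the window $[(b+1)m,(b+c+1)m]$ in Butler's construction (so $E^c(p)$ in the statement is $E^c(y)$). The strategy is to combine a uniform polynomial upper bound on $\|A^t|_{\mc{E}_i}\|$ obtained from the Zimmer block structure along the orbit of $p^m$ with an exponential lower bound obtained from a cone argument near $y$; the time-reversal trick then eliminates the $E^s(y)$ component.

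For the polynomial upper bound, recall that by Lemma \ref{lem:sequence_of_points} the point $p^m$ lies in the fixed block $\mc{D}(N,\theta)$. Since $p^m$ is periodic, iterated applications of Proposition \ref{prop:butler47} place the entire orbit of $p^m$ in some single block $\mc{D}(N',\theta')$ with $\theta'<\tau$, uniformly in $m$. Proposition \ref{prop:cts_on_pesin_set} then furnishes a uniformly continuous frame along the orbit in which $A$ is block upper triangular with isometric diagonal blocks of exponent zero. A computation analogous to the one in Proposition \ref{prop:structure_implies_zero_exponents}, but with the orbit of $p^m$ playing the role of the Lusin set, yields the uniform estimate
\[
\|A^t(x)|_{\mc{E}_i(x)}\| \le C t^{i-1}
\]
for every $x$ on the orbit of $p^m$, every $t\ge 1$, with $C$ independent of $m$.

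For the exponential lower bound, let $k$ be the period of $y$ and $L=A^k(y)$. After passing to a sufficiently high iterate $L^N$, the splitting $E^u(y)\oplus (E^c(y)\oplus E^s(y))$ satisfies the hypotheses of Lemma \ref{lem:inv_cones} with expansion rate $\mu = e^{Nk\lambda_+(y)}$ strictly dominating $\|L^N|_{E^c\oplus E^s}\|$. Since $y$ is $k$-periodic, the $kN$-step iterated cocycle $A^{kN}(\sigma^{j_0+ikN}(p^m))$ is, for each $i$, a perturbation of the single map $L^N$ of size at most $O(e^{-\tau\alpha(b+1)m})$, by the uniform bound on the shadowing distance and the H\"older continuity of $A$. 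Lemma \ref{lem:inv_cones} then gives: any unit vector $v$ making angle at least $\delta$ with $E^c(y)\oplus E^s(y)$ satisfies
\[
\|A^{j_1-j_0}(\sigma^{j_0}(p^m))v\| \ge C\,\mu^{(j_1-j_0)/(Nk)}\,\delta.
\]
Since $\mc{E}_i$ is invariant and $A^{j_1-j_0}(\sigma^{j_0}(p^m))v$ remains in $\mc{E}_i(\sigma^{j_1}(p^m))$, the polynomial upper bound forces $\delta\to 0$ as $m\to\infty$; thus $\mc{E}_i(\sigma^{j_0}(p^m))$ lies in an arbitrarily small angular neighborhood of $E^c(y)\oplus E^s(y)$ for large $m$.

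To eliminate the $E^s(y)$ component, I apply the same argument with $A$ replaced by $A^{-1}$ over $\sigma^{-1}$. Since inverses of upper block triangular matrices with isometric diagonal blocks have the same form, $A^{-1}$ takes values in a Zimmer block preserving the same flag $\{\mc{E}_j\}$, so the polynomial bound from the first step applies unchanged. Backwards from $\sigma^{j_0}(p^m)$ the orbit still shadows $y$ for $\alpha(b+1)m$ steps, with the roles of $E^u(y)$ and $E^s(y)$ interchanged, so the cone argument shows $\mc{E}_i(\sigma^{j_0}(p^m))$ lies close to $E^c(y)\oplus E^u(y)$. Intersecting the two near-containments and using $(E^c\oplus E^s)\cap(E^c\oplus E^u)=E^c$ shows that every unit vector in $\mc{E}_i(\sigma^{j_0}(p^m))$ lies within $O(\epsilon)$ of $E^c(y)$. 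The argument at $\sigma^{j_1}(p^m)$ is the mirror image, using forward iteration of $\alpha(b+c+1)m$ steps and backward iteration of $j_1-j_0$ steps, both of which lie inside the $y$-shadowing window. I expect the main obstacle to be the uniform polynomial bound on $\|A^t|_{\mc{E}_i}\|$ along the orbit of $p^m$: this is the genuinely non-conformal input that was trivial in Butler's setting, where all Zimmer blocks are one-dimensional and the cocycle is an isometry on each $\mc{E}_i/\mc{E}_{i-1}$, so only the linear growth coming from a single upper-triangular entry needs to be tracked.
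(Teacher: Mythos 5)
Your plan captures the right intuitions (cone expansion near $y$, polynomial/bounded growth from the block structure, time‐reversal to kill $E^s$), but the central technical claim is unjustified and the argument as stated would not close.

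\textbf{The polynomial bound fails.} You claim that $\|A^t(x)|_{\mc{E}_i(x)}\|\le Ct^{i-1}$ uniformly along the orbit of $p^m$, with $C$ independent of $m$, justified by the assertion that ``iterated applications of Proposition \ref{prop:butler47} place the entire orbit of $p^m$ in some single block $\mc{D}(N',\theta')$ with $\theta'<\tau$, uniformly in $m$.'' This is false. Proposition \ref{prop:butler47} gives $\sigma(\mc{D}(N,\theta))\subseteq \mc{D}(N,\theta+\epsilon)$ for a \emph{fixed} $\epsilon>0$; iterating $u_m=O(m)$ times lands you in $\mc{D}(N,\theta+u_m\epsilon)$, and $\theta+u_m\epsilon$ exceeds $\tau$ once $m$ is large. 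Shrinking $\epsilon$ with $m$ forces $N$ to grow with $m$, so there is no single block containing all orbits. Moreover, even on a single block, $\mc{D}(N,\theta)$ only controls the quasi‐conformal distortion product at rate $e^{n\theta}$; the metric that makes the cocycle isometric on quotients (and hence gives polynomial growth) is only uniformly comparable to the standard metric on the regularity block, while along the middle of the orbit of $p^m$, where $p^m$ shadows $y$ (a point with nonzero exponents), the measurable Zimmer‐block framing must degenerate exponentially to compensate the hyperbolicity. So $\|A^t|_{\mc{E}_i}\|$ in the standard metric is genuinely not polynomial there. The only place the boundedness is available is at the single point $p^m$ itself via its $\mc{D}(N,\theta)$ membership, and that is what the paper's argument uses: the quantity that is uniformly bounded is $\|\Pi_{\mc{E}_i^\perp}^{\mc{E}_i}A^{u_m}(p^m)v\|$ for the \emph{full} return map, not $\|A^t|_{\mc{E}_i}\|$ at intermediate points.

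\textbf{Raw norm vs.\ projected norm, and the missing induction.} Your cone argument gives a lower bound on $\|A^{j_1-j_0}(\sigma^{j_0}(p^m))v\|$, but this growth could occur entirely within $\mc{E}_{i-1}$, in which case there is no contradiction with the block structure on $\mc{E}_i$. What is needed is a lower bound on the component transverse to $\mc{E}_i$, i.e.\ on $\|\Pi_{\mc{E}_i^\perp}^{\mc{E}_i}(\cdot)\|$. Making that work requires exactly the inductive structure you have omitted: the paper fixes an orthogonal complement $\mc{E}_i^\perp\subset\mc{E}_{i+1}$, invokes the inductive hypothesis that $\mc{E}_i$ already lies near $E^c(y)$, and then uses parts (2) and (3) of Lemma \ref{lem:inv_cones} together with Lemma \ref{lem:proj_length} to convert the cone growth near $E^u(y)$ into growth of $\|\Pi_{\mc{E}_i^\perp}^{\mc{E}_i}(\cdot)\|$. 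Finally, the paper does not stop at $\sigma^{j_1}(p^m)$ (where no uniform bound is available); it propagates the projection lower bound back around the orbit to $p^m$ using Lemmas \ref{lem:general_angle_decay} and \ref{lem:angles_near_conformal}, so that the comparison happens at the one point where $\mc{D}(N,\theta)$ membership gives the uniform boundedness. Your time‐reversal step to eliminate $E^s$ is the right idea and matches the paper, but as written it inherits all of the above gaps.

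So the approach is in the right family, but the actual contradiction is not the one you set up: it is ``exponential growth of the $\mc{E}_i^\perp$‐projection around the full return vs.\ uniform boundedness of that projection at $p^m$,'' obtained inductively, not ``exponential growth of the raw norm over a partial segment vs.\ a polynomial bound at intermediate points.''
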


\begin{proof}
Without loss of generality, we suppose that $x$ and $y$ are both fixed points. 
We proceed by induction. Suppose that the claim holds for $\mc{E}_i$, we check it for $\mc{E}_{i+1}$. Along the orbit of the point $p^m$ we fix an orthogonal complement $\mc{E}_i^{\perp}$ to $\mc{E}_i$ inside of $\mc{E}_{i+1}$ with respect to the usual metric on $\R^d$. Apply Lemma \ref{lem:inv_cones} to the map at $y$ with respect to the splitting $\R^d=E^u(y)\oplus(E^c(y)\oplus E^s(y))$, which gives constants $C,D,\epsilon_0,\sigma,\ell$ so that the conclusions of that lemma hold for these constants.

Note that the point $\sigma^{(b+1)m}(p^m)$ exponentially shadows $y$, and hence for all $j_0\le j\le j_1$, 
\[
d(\sigma^{j}(p^m),y)<e^{-\alpha b m}.
\]
So, for $0<\delta<\epsilon_0$ and sufficiently large $m$ by H\"older continuity of $A$ the cone $\mc{C}(\delta^{-1})$ is invariant under $A(\sigma^j(p^m))$ for $j_0\le j\le j_1$. Hence by Lemma \ref{lem:inv_cones}, for any $\epsilon>0$ and any unit vector $v$ that makes angle at least $\delta$ with $E^c(y)\oplus E^s(y)$:
\[
\|\Pi_{E^u(y)}^{E^s(y)\oplus E^c(y)}A^{j_1-j_0}(\sigma^{j_0}(p^m))v\|\ge C(\mu-\epsilon^{1-\sigma})^{j_1-j_0}\delta,
\]
and this holds for all $m$ greater than some fixed $M$. We would like to know the same thing but with $\mc{E}_i^{\perp}$ in the place of $E^u(y)$. But we are assured of this because, by the induction hypothesis, $\mc{E}_i(\sigma^{j_1}(p^m))$ is arbitrarily close to $E^c(y)$ for sufficiently large $m$, and $\mc{E}_i^{\perp}$ is uniformly transverse to $\mc{E}_i$. Hence by the third part of Lemma \ref{lem:inv_cones}, as $A^{j_1-j_0}(\sigma^{j_0}(p^m))v$ is uniformly transverse to $E^c(y)\oplus E^s(y)$, the conclusion follows by Lemma \ref{lem:proj_length}.

To summarize the above discussion, what we have shown is that for each $\epsilon,\delta>0$, there exists $M$ such that for all $m>M$, if $v\in \mc{C}(\delta^{-1})$ is a unit vector, then
\begin{equation}
\|\Pi_{\mc{E}_i^{\perp}}^{\mc{E}_i}A^{j_1-j_0}(\sigma^{j_0}(p^m))v\|\ge C(\mu-\epsilon^{1-\sigma})^{j_1-j_0}\delta.
\end{equation}

We now apply Lemma \ref{lem:exp_shadowing_lemma} and use the boundedness of the cocycle by $e^{\eta}$ for some $\eta>0$, which allows us to conclude that if $v$ is a unit vector at $p^m$ that is not in $\mc{E}_i(p^m)$, and $A^{j_0}(p^m)v$ lies in $\mc{C}(\delta^{-1})$, then
\[
\|\Pi_{\mc{E}_i^{\perp}}^{\mc{E}_i}A^{j_1}(p^m)v\|\ge Ce^{-\epsilon bm}e^{-(m+\alpha bm)\eta}e^{(j_1-j_0)\ln(\mu-\epsilon^{1-\sigma})}\delta.
\]
where the first and second exponentials come from first shadowing $x$ and then transitioning from shadowing $x$ to shadowing $y$, which takes $m$ steps.

Finally, the orbit of $p^m$ transitions to shadowing $x$ again and so by using Lemma \ref{lem:general_angle_decay} along the segment from $y$ to $x$ and then Lemma \ref{lem:angles_near_conformal} when we shadow $x$, 
we see that we get the following estimate on the full orbit, under the assumption that the image of $v$ at $\sigma^{j_0}(p^m)$ makes an angle of more than $\delta$ with $E^{s}(y)\oplus E^c(y)$:
\[
\|\Pi_{\mc{E}_i^{\perp}}^{\mc{E}_i}A^{(2b+c+2)m}(p^m)v\|\ge
Ce^{m(-\lambda-\eta-(5\epsilon b)-\alpha b\lambda-\alpha b\eta+(c-2\alpha)\ln(\mu-\epsilon^{1-\sigma}))}\delta.
\]
Note that $\delta$ is just a small fixed constant. Hence up to constants we have the lower bound
\begin{equation}\label{eqn:final_projection}
\|\Pi_{\mc{E}_i^{\perp}}^{\mc{E}_i}A^{(2b+c+2)m}(p^m)v\| \ge Ce^{m(-\lambda-\eta-(5\epsilon b)-\alpha b\lambda-\alpha b\eta+(c-2\alpha)\ln(\mu-\epsilon^{1-\sigma}))}.
\end{equation}
Because $p^m\in \mc{D}(N,\theta)$, we see that for any unit vector $v\in \mc{E}_{i+1}$, that 
\[
\|\Pi_{\mc{E}_i^{\perp}}^{\mc{E}_i}A^{(2b+c+2)m}(p^m)v\|
\] is uniformly bounded independent of $m$. This is because over $\mc{D}(N,\theta)$ the Zimmer block metric and the standard metric are uniformly comparable. Thus if the right hand side of equation \eqref{eqn:final_projection} is unbounded in $m$, for sufficiently large $m$ we obtain a contradiction of our assumption that $A^{j_0}(p_m)v$ made an angle of more than $\delta$ with $\mc{E}_i(\sigma^{j_0}(p_m))$. Hence we obtain a contradiction as long as 
\[
-\lambda-\eta-(5\epsilon b)-\alpha b\lambda-\alpha b\eta+(c-2\alpha)\ln(\mu-\epsilon^{1-\sigma})>0.
\]
But this is easy to achieve: for any choice of $b$ and $c$ sufficiently large, which we may have by Remark \ref{rem:b_and_c_large}, this follows as long as $\alpha$ and $\epsilon$ are both chosen to be sufficiently small. In particular, we obtain the contradiction of our assumption. Note as well that the specific sufficiently large $m$ that gives a contradiction depends only on $\delta$ and not on $v$.

To summarize, what we have just shown is that for all $\epsilon>0$, there exists $M$, such that for all $m>M$, $\mc{E}_{i+1}(\sigma^{j_0}(p^m))$ makes an angle of at most $\epsilon$ with $E^c(y)\oplus E^s(y)$. 

We now show that the same holds without needing the $E^c$ term. To do this we make an observation about the cocycle which is special to our setting. Consider what happens if we define a new cocycle $B\colon \Sigma\to \GL(n,\R)$ by the map $x\mapsto (A(x))^{-1}$ and use the original dynamics on the base. Note that for $B$ the stable and unstable bundle of $y$ are swapped and $x$ still satisfies $\lambda_+(x)=\lambda_-(x)=0$. Further, this new cocycle is also measurably conjugated to the same Zimmer block over the same measure $\mu$. This clear because if $A\colon V\to W$ is a linear transformation between two framed vector spaces whose representing matrix is in $U_0$, then $A^{-1}$'s matrix is also in $U_0$. Applying the above argument to $p^m$ for this cocycle, we can conclude something even stronger: for each $\epsilon>0$, there exists an $M$ such that for all $m>M$, and any $v\in \mc{E}_{i+1}(p^m)$, the vector $A^{j_0}(p^m)v$ makes an angle of at most $\epsilon$ with $E^c(p)$. We can reach an identical conclusion at the point $\sigma^{j_1}(p^m)$ by applying the same argument but instead iterating the cocycle backwards. Thus we have obtained the needed conclusion at the two endpoints. The rest of the result follows by induction.
\end{proof}

We now deduce Lemma \ref{lem:pps_have_0_exponent}.

\begin{proof}[Proof of Lemma \ref{lem:pps_have_0_exponent}.]
Suppose for the sake of contradiction that $y$ is a periodic point that does not satisfy $\lambda_+(y)=\lambda_-(y)=0$. We apply Lemma \ref{lem:sequence_of_points} to create a sequence of periodic points $p^m$ that satisfy the hypotheses of Lemma \ref{lem:subspaces_near_center}. Applying the conclusion of Lemma \ref{lem:subspaces_near_center} to the top term in the measurable flag, which is $\R^d$, implies that somehow all of $\R^d$ makes a small angle with the subspace $E^c(y)$, which is impossible. Thus we have obtained a contradiction of our initial assumption about $y$ and we are done.
\end{proof}

We now conclude the main theorem.

\begin{proof}[Proof of Theorem \ref{thm:Zimmer_block_rigidity}.]

By Lemma \ref{lem:pps_have_0_exponent} every periodic point $p$ has $\lambda_+(p)=\lambda_-(p)=0$. Thus applying \cite[Prop. 4.11]{kalinin2013cocycles}, we deduce that for any $\epsilon>0$, there exists $C_{\epsilon}$ such that for all $x\in \Sigma$ and $n\in \Z$,
\[
\|A^n(x)\|\cdot \|(A^n(x))^{-1}\|^{-1}\le C_{\epsilon} e^{\epsilon\abs{n}}.
\]
This gives immediately that there exist $N,\theta>0$ such that $\Sigma\subset \mc{D}(N,\theta)$. 
Thus by Proposition \ref{prop:cts_on_pesin_set}, we see that there exists a H\"older continuous flag 
\[
\{0\}=\mc{E}_0\subset\mc{E}_1\subset \cdots\subset \mc{E}_k=\R^d,
\]
and H\"older metrics on 
\[
\mc{E}_{i+1}/\mc{E}_i
\]
for all $0\le i<k$ that are invariant under the cocycle and quotient cocycles. 

By Lemma \ref{lem:piece_together_lemma}, we can then obtain a H\"older framing that gives the cocycle the desired form via an inductive process.
\end{proof}

\section{Measurable Conjugacies agree with Continuous conjugacies}

In this section we prove Theorem \ref{thm:transfer_rigidity}. In order to prove the rigidity result we ultimately seek, we will first prove the following restricted rigidity result that requires the conjugacy to take values in a Zimmer block.

\begin{prop}\label{prop:mbl_rigidity_in_a_block}
Suppose that $U_{\lambda}$ is a Zimmer block, $(\Sigma,\sigma)$ is a transitive subshift of finite type, $A,B\colon \Sigma\to U_{\lambda}$ are H\"older continuous functions defining cocycles over $\sigma$, and that $\mu$ is a fully supported measure with continuous product structure. Let $C\colon \Sigma\to U_{\lambda}$ be a $\mu$-measurable function such that for $\mu$-a.e.~$x\in \Sigma$,
\[
A(x)=C(\sigma(x))B(x)C(x)^{-1},
\]
then $C$ agrees $\mu$-a.e.~with a H\"older continuous function $\overline{C}$.
\end{prop}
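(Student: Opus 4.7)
The plan is to follow the approach of \cite{pollicott2001livsic} indicated in the introduction, leveraging the Zimmer block machinery developed in Sections 3--5 of the paper. After normalizing $\lambda=0$, the first step is to observe that every periodic orbit $p$ has $\lambda_+(p)=\lambda_-(p)=0$ for both $A$ and $B$, because $A^k(p), B^k(p)\in U_0$ have all eigenvalues of modulus $1$. Applying \cite[Prop.~4.11]{kalinin2013cocycles} as at the end of the proof of Theorem \ref{thm:Zimmer_block_rigidity}, both cocycles satisfy $\Sigma\subset \mc{D}(N,\theta)$ for some $N>0$ and $\theta<\tau$, so by Proposition \ref{prop:viana} both admit H\"older continuous stable and unstable holonomies $H^{s/u,A}_{xy}$ and $H^{s/u,B}_{xy}$ defined on all of $\Sigma$.

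The key technical step is to show that $C$ is holonomy invariant almost everywhere: for $\mu$-a.e.\ $x$ and $\mu^s$-a.e.\ $y\in W^s_{\text{loc}}(x)$,
\begin{equation}\label{eqn:Cinv}
C(y)\, H^{s,B}_{xy}=H^{s,A}_{xy}\, C(x),
\end{equation}
and the analogous identity for unstable holonomies. To prove \eqref{eqn:Cinv}, I encode $C$ as a measurable invariant subbundle of a larger Zimmer-block cocycle. Let $D \coloneqq B\oplus A\colon \Sigma\to \GL(2d,\R)$, which takes values in a larger Zimmer block $U'_0\subset \GL(2d,\R)$. The cohomological equation $A(x)C(x)=C(\sigma x)B(x)$ is equivalent to saying that the graph
\[
\Gamma(x)\coloneqq \{(v, C(x) v) : v \in \R^d\}
\]
is a measurable $D$-invariant $d$-dimensional subbundle of $\R^d\oplus \R^d$. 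Since $D$ has zero extremal Lyapunov exponents (Proposition \ref{prop:structure_implies_zero_exponents}), Lemma \ref{lem:inv_on_full_measure} applied to $\Gamma$ yields a $\sigma$-invariant full measure set $\Omega$ on which $\Gamma$ is invariant under the $D$-holonomies. Because $D$ is block diagonal, $H^{s,D}_{xy}=H^{s,B}_{xy}\oplus H^{s,A}_{xy}$, and writing out this action on $\Gamma(x)$ gives \eqref{eqn:Cinv}.

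To finish, fix reference points $\omega^i\in [0;i]\cap \Omega$ for each cylinder and define $\overline{C}$ on $[0;i]$ by
\[
\overline{C}(x)\coloneqq H^{u,A}_{[\omega^i, x], x}\, H^{s,A}_{\omega^i, [\omega^i, x]}\, C(\omega^i)\, H^{s,B}_{[\omega^i, x], \omega^i}\, H^{u,B}_{x, [\omega^i, x]}.
\]
This is H\"older continuous by the H\"older continuity of the holonomies, and by \eqref{eqn:Cinv} together with its unstable analog and Fubini against the continuous product structure of $\mu$, we have $\overline{C}=C$ $\mu$-a.e. The cohomological equation $A(x)\overline{C}(x)=\overline{C}(\sigma x)B(x)$ then holds on a dense set and extends to all of $\Sigma$ by continuity.

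The main obstacle is the second stage, in particular ensuring that the resulting $\overline{C}$ genuinely takes values in $\GL(d,\R)$: one must verify that the continuous extension of $\Gamma$ produced by Proposition \ref{prop:cts_on_pesin_set} remains transverse to $\{0\}\oplus \R^d$ everywhere on $\Sigma$, not merely on the full measure set $\Omega$. This is handled by exploiting the $D$-invariance of the two coordinate subbundles $\R^d\oplus \{0\}$ and $\{0\}\oplus \R^d$ together with the Zimmer block structure and the full support of $\mu$, which together rule out any $\sigma$-invariant closed exceptional set where transversality could fail.
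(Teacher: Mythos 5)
Your proof is correct and takes a genuinely different route from the paper's. The paper proceeds by an inductive block decomposition of $C$: the diagonal blocks are handled by the Pollicott--Walkden result for compact-valued cocycles, and the off-diagonal blocks are upgraded superdiagonal by superdiagonal via the explicit $2\times 2$ matrix computation of Lemma \ref{lem:rigidity_for_2_by_2}, which compares the two expressions for the upper-right corner of $A^{-n}(x')A^n(x)$ and passes to a limit along returns to a Lusin set. Your approach is more geometric: since $A,B$ both take values in $U_0$, every periodic orbit has zero extremal exponents for each, so \cite[Prop.~4.11]{kalinin2013cocycles} gives $\Sigma\subset\mc{D}(N,\theta)$ and globally defined H\"older holonomies for both cocycles; encoding $C$ as the graph subbundle $\Gamma$ of the direct-sum cocycle $D=B\oplus A$ reduces the problem to the Ledrappier-based holonomy-invariance machinery already in the paper (the argument of Lemma \ref{lem:inv_on_full_measure} applies verbatim to any measurable $D$-invariant subbundle, not just the Zimmer flag, since it only uses vanishing extremal exponents and Ledrappier's invariance principle), and $\overline{C}$ is then read off via the holonomy transport formula and Lemma \ref{lem:holonomy_inv_continuous_version}. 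This bypasses the block induction entirely, is arguably cleaner, and actually yields a slightly stronger result: it never uses the hypothesis that $C$ takes values in $U_{\lambda}$, so it would apply to an arbitrary measurable $\GL(d,\R)$-valued conjugacy between two $U_\lambda$-valued H\"older cocycles. Two minor points to tidy: in your formula for $\overline{C}$ the $s$ and $u$ superscripts are interchanged — since $[\omega^i,x]\in W^u_{loc}(\omega^i)\cap W^s_{loc}(x)$, the transport from $\omega^i$ to $[\omega^i,x]$ uses $H^u$ and from $[\omega^i,x]$ to $x$ uses $H^s$; and the concluding worry about transversality of the continuous extension of $\Gamma$ is actually moot, because your explicit holonomy formula for $\overline{C}$ is manifestly a product of invertible matrices and hence automatically lands in $\GL(d,\R)$ everywhere, so there is no need to pass through Proposition \ref{prop:cts_on_pesin_set} or to rule out an exceptional non-transversality set.
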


In \cite[Thm.~5.1]{pollicott2001livsic}, it is shown that the above statement holds when $B=\Id$ and the measure is assumed to be an equilibrium state. In a later discussion in \cite[Sec.~9]{pollicott2001livsic}, it is remarked that the proof also works for cocycles taking values in nilpotent or compact groups when $B$ is non-trivial. To show how the structure of the Zimmer block is used, we give a proof of the above proposition along the general lines of the proof of \cite[Thm.~5.1]{pollicott2001livsic}.  Our proof is a bit longer than Pollicott and Walkden's because we must use the structure of $U_{\lambda}$.

\begin{proof}[Proof of Proposition \ref{prop:mbl_rigidity_in_a_block}.]
As before, it suffices to study only the case where $\lambda=0$. To begin, write the conjugacy $C$ as a block matrix with a $k$ by $k$ block structure given by the flag $\mc{E}_1\subseteq \cdots \subseteq \mc{E}_k$ preserved by $U_{\lambda}$. Let $C_{ij}$ denote the $i,j$th block. Note that all blocks with $i>j$ are zero matrices due to the Zimmer block structure.

As each $C_{ii}$ is an orthogonal matrix, by the measurable rigidity of cocycles taking values in compact groups \cite[Thm.~5.1]{pollicott2001livsic}, we see that $C_{ii}$ coincides almost everywhere with a H\"older continuous version $\overline{C}_{ii}$. Let $\hat{C}(x)$ be the block diagonal matrix with diagonal blocks $\overline{C}_{ii}(x)$. Then note that 
\begin{equation}\label{eqn:first_conjugation}
A(x)=(C(\sigma(x))\hat{C}(\sigma(x))^{-1})\hat{C}(\sigma(x))B(x)\hat{C}(x)^{-1}(\hat{C}(x)C(x)^{-1}).
\end{equation}
Thus we have reduced to the case that at almost every point each diagonal block of $C$ is an identity matrix. The proposition will follow by an induction using the following lemma.

\begin{lem}\label{lem:rigidity_for_2_by_2}
Suppose in the setting of Proposition \ref{prop:mbl_rigidity_in_a_block} that $U_{0}\subset \SL(d,\R)$ is a Zimmer block with two components and that $H$ is a $\mu$-measurable conjugacy between $A,B\colon \Sigma\to U_{0}$ of the form
\[
\begin{bmatrix}
\Id_a & C(x)\\
0 & \Id_b
\end{bmatrix}.
\]
Then $H$ coincides almost everywhere with a H\"older continuous conjugacy between $A$ and $B$.
\end{lem}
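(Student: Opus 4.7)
The plan is to recast the problem as a projective invariance question for an auxiliary Hölder Zimmer-block cocycle and then replay the proof of Theorem~\ref{thm:Zimmer_block_rigidity} on that cocycle.

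First I would expand $A(x) = H(\sigma x) B(x) H(x)^{-1}$ block by block. Since the diagonal blocks of $H$ are identities and $A, B$ take values in $U_0$, the diagonal parts force $A_i = B_i$ (both Hölder with values in compact groups $K_i$), and the $(1,2)$-block yields the twisted Livšic equation
\[
C(\sigma x) B_2(x) - B_1(x) C(x) = A_{12}(x) - B_{12}(x)
\]
for the measurable $C : \Sigma \to V := \mathrm{Mat}(a\times b, \R)$. Setting $\mc{M}(x) X := B_1(x) X B_2(x)^{-1}$ (a Hölder cocycle on $V$ valued in a compact subgroup of $\GL(V)$, because it acts by isometries of the Frobenius norm) and $g(x) := (A_{12}(x) - B_{12}(x))B_2(x)^{-1}$, this rewrites as $C(\sigma x) = \mc{M}(x)C(x) + g(x)$. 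Now form the Hölder cocycle
\[
\tilde{\mc{M}}(x) := \begin{bmatrix}\mc{M}(x) & g(x)\\ 0 & 1\end{bmatrix}
\]
on $V \oplus \R$; it manifestly takes values in a Zimmer block of exponent $0$ with two compact diagonal factors, and the affine equation is exactly the $\tilde{\mc{M}}$-invariance of the measurable line field $\eta(x) := \R\cdot(C(x),1)$ in $\mathbb{P}(V\oplus\R)$.

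The crux is to apply the Section~4--5 machinery to $\tilde{\mc{M}}$ with $\eta$ playing the role of the measurable flag $\mc{E}_1$. Since $\tilde{\mc{M}}$ is itself a Zimmer-block cocycle, Lemma~\ref{lem:pps_have_0_exponent} (with the identity transfer function) gives $\lambda_\pm(p)=0$ at every periodic $p$; \cite[Prop.~4.11]{kalinin2013cocycles} then produces $N$ and $\theta<\tau$ with $\Sigma = \mc{D}(N,\theta)$ for $\tilde{\mc{M}}$, and in particular its stable and unstable holonomies are globally defined and Hölder. The Ledrappier argument of Lemmas~\ref{lem:inv_on_full_measure} and~\ref{lem:holonomy_invariant} shows that $\eta$ is holonomy invariant on a full measure set, and Proposition~\ref{prop:cts_on_pesin_set} then produces a Hölder continuous $\tilde{\mc{M}}$-invariant line field $\hat\eta$ on all of $\Sigma$ that agrees with $\eta$ $\mu$-almost everywhere.

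Finally, I must rule out the ``bad'' set $Z := \{x : \hat\eta(x) \subset V\oplus\{0\}\}$, which is closed of measure zero. Since both $\hat\eta$ and the subbundle $V\oplus\{0\}$ are $\tilde{\mc{M}}$-invariant and holonomy invariant, $Z$ is $\sigma$-invariant and saturated by local stable and unstable leaves. The local product structure of $\mu$ then forces any non-empty such $Z$ to contain a full product box and hence to be open, contradicting the fact that $Z$ has measure zero and $\mu$ is fully supported; so $Z = \emptyset$. Thus $\hat\eta(x) = \R\cdot(\bar C(x),1)$ on all of $\Sigma$ for a unique Hölder $\bar C : \Sigma \to V$, which by $\tilde{\mc{M}}$-invariance satisfies $\bar C(\sigma x) = \mc{M}(x)\bar C(x)+g(x)$ pointwise. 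Setting $\bar H(x) := \begin{bmatrix}\Id_a & \bar C(x)\\ 0 & \Id_b\end{bmatrix}$ yields the required Hölder conjugacy, with $\bar H = H$ $\mu$-a.e. The main obstacle is precisely this saturation step; once $\tilde{\mc{M}}$ has been constructed, the rest of the plan is organizing the (essentially automatic) verification that the Theorem~\ref{thm:Zimmer_block_rigidity} machinery transfers verbatim to the higher-dimensional Zimmer-block cocycle $\tilde{\mc{M}}$.
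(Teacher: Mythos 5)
Your proof is correct, but it takes a genuinely different route from the paper's. You linearize the twisted coboundary equation $C(\sigma x)=\mc{M}(x)C(x)+g(x)$ by passing to the augmented Zimmer-block cocycle $\tilde{\mc{M}}$ on $V\oplus\R$ and then rerun the machinery behind Theorem \ref{thm:Zimmer_block_rigidity} (Ledrappier's invariance principle, the sets $\mc{D}(N,\theta)$, Proposition \ref{prop:cts_on_pesin_set}) to upgrade the invariant measurable line field $\R\cdot(C(x),1)$ to a H\"older one; your final saturation step ruling out $\hat\eta(x)\subset V\oplus\{0\}$ is sound, since $V\oplus\{0\}$ is preserved by the holonomies of the block-triangular cocycle and the double $s$/$u$-saturation of any point is a full cylinder $[0;i]$, which has positive measure. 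The paper instead argues directly from the relation $A^{-n}(x')A^n(x)=H(x')B^{-n}(x')H(\sigma^n(x'))^{-1}H(\sigma^n(x))B^n(x)H(x)^{-1}$: for $x'\in W^s_{loc}(x)$ with $x,x'$ returning simultaneously to a Lusin set along a subsequence, the compact-valued diagonal blocks produce holonomies $H^1_{xx'},H^2_{xx'}$, and comparing upper-right corners in the limit yields the pointwise identity $C(x')=H^1_{xx'}C(x)(H^2_{xx'})^{-1}$, after which Lemma \ref{lem:holonomy_inv_continuous_version} finishes. The paper's route is lighter, needing only the elementary existence of holonomies for isometric cocycles and a Lusin-set argument rather than the full measurable-rigidity apparatus, whereas yours is conceptually cleaner in that it subsumes the affine problem into the linear invariant-section framework already developed. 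Note also that your appeal to Lemma \ref{lem:pps_have_0_exponent} and periodic approximation is unnecessary: since $\tilde{\mc{M}}$ lies \emph{continuously} in a Zimmer block of exponent $0$ with compact diagonal, $\|\tilde{\mc{M}}^n(x)\|\,\|\tilde{\mc{M}}^n(x)^{-1}\|$ grows at most polynomially in $n$, so $\Sigma=\mc{D}(N,\theta)$ for suitable $N$ and $\theta<\tau$ can be read off directly.
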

\begin{proof}
As $H$ is a measurable conjugacy, for $\mu$-a.e.~$x,x'$ and every $n$
\begin{equation}\label{eqn:conjugacy_product}
A^{-n}(x')A^n(x)=H(x')B^{-n}(x')H(\sigma^n(x'))^{-1}H(\sigma^n(x))B^n(x)H(x)^{-1}.
\end{equation}
We fix a Lusin set for $H$ of measure greater than $1/2$ and note that for any pair of points $x,x'$ that are regular for the Lusin set that these points simultaneously return to the Lusin set infinitely often at some sequences of times $n_i\to \infty$.

We now write out the right hand side of equation \eqref{eqn:conjugacy_product} in components:

\begin{align*}
&\begin{bmatrix}
\Id & C(x')\\
0 & \Id
\end{bmatrix}
\begin{bmatrix}
A_1^{-n}(x') & B_{-n}(x') \\
0 & A_2^{-n}(x') 
\end{bmatrix}
\begin{bmatrix}
\Id & -C(\sigma^n(x')) \\
0 & \Id
\end{bmatrix}\times \\
&\hspace{3cm}\begin{bmatrix}
\Id & C(\sigma^n(x))\\
0 & \Id
\end{bmatrix}
\begin{bmatrix}
A^n_1(x) & B_n(x) \\
0 & A^n_2(x)
\end{bmatrix}
\begin{bmatrix}
\Id & -C(x) \\
0 & \Id
\end{bmatrix},
\end{align*}
where we have written $B_n(x)$ for the upper right corner block of $B^n(x)$.

Multiplying these out we obtain that the upper right hand corner of this matrix is equal to:
\begin{align}\label{eqn:upper_right_corner_complicated}
\begin{split}
&-A_1^{-n}(x')A_1^n(x)C(x)+A_1^{-n}(x')B_n(x)+A_1^{-n}(x')(C(\sigma^n(x'))-C(\sigma^n(x)))A_2^{n}(x)\\&\,\,\,\,\,+B_{-n}(x')A_2^n(x)+C(x')A_2^{-n}(x')A_2^n(x),
\end{split}
\end{align}
which is also equal by assumption to the upper right hand corner of $A^{-n}(x')A^n(x)$, which equals:
\begin{equation}\label{eqn:upper_right_corner_simple}
A_1^{-n}(x')B_n(x)+B_{-n}(x')A_2^n(x).
\end{equation}
If we assume that $x'\in W^s_{loc}(x)$, then $\sigma^n(x')$ and $\sigma^n(x)$ approach exponentially fast.  Note now that because $A_1(x)$ and $A_2(x)$ are both isometric, that $A^{-n}_1(x')A_1^n(x)$ converges to a H\"older continuous function we denote by $H_{xx'}^1$ as in the discussion of holonomies in Section \ref{sec:preliminaries}. The same holds for $A_2(x)$, which gives a second holonomy $H_{xx'}^2$. Further note that if $n_i\to \infty$ is a sequence of times when both $x$ and $x'$ are in the Lusin set, we have that $C(\sigma^n(x))-C(\sigma^n(x'))\to 0$. Hence the third term in equation \eqref{eqn:upper_right_corner_complicated} also goes to zero as $n\to \infty$.
Consider the difference between equations \eqref{eqn:upper_right_corner_simple} and \eqref{eqn:upper_right_corner_complicated}, and take the limit as $n_i\to \infty$, this gives that
\[
-H_{xx'}^1C(x)+C(x')H^2_{xx'}=0.
\]
Hence,
\begin{equation}\label{eqn:formula_for_C}
C(x')=H_{xx'}^1C(x)(H^2_{xx'})^{-1}.
\end{equation}
Almost every point in $\Sigma$ is regular for the Lusin set, hence almost every pair of points $x,x'$ simultaneously return to the set along some subsequence. Thus by the product structure of $\mu$ we see that for almost every local stable leaf there is a H\"older continuous version of $C(x)$ over that leaf given by equation \eqref{eqn:formula_for_C}. The same holds for local unstable leaves. We are then done by Lemma \ref{lem:holonomy_inv_continuous_version}, which appears below.
\end{proof}

We now show how to conclude from Lemma \ref{lem:rigidity_for_2_by_2}. The idea is to induct up each superdiagonal of $C$. As the idea is simple and a formal induction would likely obscure the argument, we omit a general proof and illustrate the approach in the case of a Zimmer block with a flag of length four.

By equation \eqref{eqn:first_conjugation}, we have already reduced to the case where the diagonal blocks of $C$ are all the identity. The equation $A=C(\sigma(x))B(x)C(x)^{-1}$ then becomes:
\begin{align}
&\begin{bmatrix}
A_{11} & A_{12} & A_{13} & A_{14}\\
0 & A_{22} & A_{23} & A_{24}\\
0 & 0 & A_{33} & A_{34}\\
0 & 0 & 0 & A_{44}
\end{bmatrix}\\ \notag
&=\begin{bmatrix}
\Id & C_{12} & C_{13} & C_{14}\\
0 & \Id & C_{23} & C_{24}\\
0 & 0 & \Id & C_{34}\\
0 & 0 & 0 & \Id
\end{bmatrix}
\begin{bmatrix}
A_{11} & B_{12} & B_{13} & B_{14}\\
0 & A_{22} & B_{23} & B_{24}\\
0 & 0 & A_{33} & B_{34}\\
0 & 0 & 0 & A_{44}
\end{bmatrix}
\begin{bmatrix}
\Id & C_{12}' & C_{13}' & C_{14}'\\
0 & \Id & C_{23}' & C_{24}'\\
0 & 0 & \Id & C_{34}'\\
0 & 0 & 0 & \Id
\end{bmatrix},
\end{align}
where we write $C'_{ij}$ for the $ij$th block of $C(x)^{-1}$. Note that if we consider each $2\times 2$ block along the diagonal that these all satisfy the hypotheses of Lemma \ref{lem:rigidity_for_2_by_2}. From this we are able to conclude that the superdiagonal blocks $C_{12},C_{23}$ and $C_{24}$ of $C$ each coincide with a H\"older continuous version $\overline{C}_{12},\overline{C}_{23},\overline{C}_{34}$, respectively. As before we may then define a H\"older continuous $\hat{C}$ by
\[
\hat{C}(x)=
\begin{bmatrix}
\Id & \overline{C}_{12} & 0 & 0\\
0 & \Id & \overline{C}_{23} & 0\\
0 & 0 & \Id & \overline{C}_{34}\\
0 & 0 & 0 & \Id
\end{bmatrix}.
\]
Hence we may apply the same trick as in equation \eqref{eqn:first_conjugation}, and replace $B$ with the matrix $\hat{C}(\sigma(x))B(x)\hat{C}(x)^{-1}$ and the conjugacy $C$ with the conjugacy $C(x)\hat{C}(x)^{-1}$. Note that the conjugacy $C(x)\hat{C}(x)^{-1}$ has all $0$'s for its superdiagonal. Thus it suffices to study of the equation:
\begin{align}
&\begin{bmatrix}
A_{11} & A_{12} & A_{13} & A_{14}\\
0 & A_{22} & A_{23} & A_{24}\\
0 & 0 & A_{33} & A_{34}\\
0 & 0 & 0 & A_{44}
\end{bmatrix}&\\ \notag
&=
\begin{bmatrix}
\Id & 0 & C_{13} & C_{14}\\
0 & \Id & 0 & C_{24}\\
0 & 0 & \Id & 0\\
0 & 0 & 0 & \Id
\end{bmatrix}
\begin{bmatrix}
A_{11} & A_{12} & B_{13} & B_{14}\\
0 & A_{22} & A_{23} & B_{24}\\
0 & 0 & A_{33} & A_{34}\\
0 & 0 & 0 & A_{44}
\end{bmatrix}
\begin{bmatrix}
\Id & 0 & C_{13}' & C_{14}'\\
0 & \Id & 0 & C_{24}'\\
0 & 0 & \Id & 0\\
0 & 0 & 0 & \Id
\end{bmatrix}.
\end{align}
We now observe that the elements $C_{13}$ and $C_{24}$ satisfy a $2$ by $2$ equation as in Lemma \ref{lem:rigidity_for_2_by_2}. For instance, in the case of $C_{24}$, observe that:
\[
\begin{bmatrix}
A_{22} & A_{24}\\
0 & A_{44}
\end{bmatrix}
=
\begin{bmatrix}
\Id & C_{24}\\
0 & \Id
\end{bmatrix}
\begin{bmatrix}
A_{22} & B_{24}\\
0 & A_{44}
\end{bmatrix}
\begin{bmatrix}
\Id & C_{24}'\\
0 & \Id
\end{bmatrix}.
\]
Hence by applying Lemma \ref{lem:rigidity_for_2_by_2}, we conclude that there exists a H\"older continuous version of $C_{24}$. The same holds for $C_{13}$. Thus we may again define a matrix $\hat{C}$, and conjugate by this matrix as in \eqref{eqn:first_conjugation} to reduce to the case that $C_{13}$ and $C_{24}$ are also zero. Iterating this procedure once again allows us to conclude that $C_{14}$ is H\"older continuous and then we are done.

Exactly the same procedure works in general: one inductively improves the regularity of $C$ along the different super diagonals by studying $2$ by $2$ subsystems and applying Lemma \ref{lem:rigidity_for_2_by_2}.
\end{proof}

\begin{lem}\label{lem:holonomy_inv_continuous_version}
Suppose that $(\Sigma,\sigma)$ is a shift endowed with an invariant measure $\mu$ that has full support and continuous product structure. Suppose that there are uniformly H\"older continuous linear holonomy maps $H^s_{xy}, H^u_{xy}\in \GL(d,\R)$ defined on every local stable and unstable leaf and that these holonomies are transversely continuous. Suppose $C\colon \Sigma\to \R^d$ is a $\mu$-measurable map that is almost surely holonomy invariant in the sense that for $\mu$-a.e.~$x\in M$ and almost every $y\in W^u_{loc}(x)$, $H_{xy}^uC(x)=C(y)$ and the same holds for local stable manifolds. Then $C$ coincides almost everywhere with a H\"older continuous function $\overline{C}$ that is invariant under stable and unstable holonomies.
\end{lem}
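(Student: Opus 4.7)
The plan is to build $\overline{C}$ locally on a product box via a formula involving the two holonomies through a reference point, then verify it is almost everywhere equal to $C$ and glue the local constructions together. First I would pass to a $\sigma$-invariant full-measure set $\Omega$ of ``good'' points in the spirit of Lemma \ref{lem:holonomy_invariant}: by Fubini applied to the continuous product structure of $\mu$, a.e.~$x\in \Sigma$ has the property that $W^s_{loc}(x)\cap \Omega$ has full conditional measure, and for a.e.~$y$ in that intersection, $W^u_{loc}(y)\cap \Omega$ also has full conditional measure (and the symmetric statement with $s$ and $u$ swapped). Along $\Omega$, the $\mu$-a.e.~holonomy invariance of $C$ on individual leaves can be chained: if $x,y\in\Omega$ lie in a common cylinder $[0;i]$ with $[x,y],[y,x]\in\Omega$, then
\[
C(y)=H^s_{[x,y],y}\,H^u_{x,[x,y]}\,C(x).
\]

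Next, fix a cylinder $[0;i]$ and a reference point $x_0\in\Omega\cap[0;i]$ whose local product coordinates are generic in the above sense. For every $y\in[0;i]$, set
\[
\overline{C}(y)\;\coloneqq\; H^s_{[x_0,y],y}\,H^u_{x_0,[x_0,y]}\,C(x_0).
\]
This is defined everywhere on $[0;i]$, and because the holonomies are uniformly H\"older along leaves and transversely continuous (by hypothesis, analogously to Proposition \ref{prop:transverse_holonomy_continuity}), $\overline{C}$ is H\"older continuous on $[0;i]$. By construction $\overline{C}(x_0)=C(x_0)$, and the identity $\overline{C}(y)=C(y)$ propagates to all $y\in[0;i]$ whose ``$su$-path'' $x_0\to [x_0,y]\to y$ stays in $\Omega$; by the good-points argument this is a full-measure set.

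To upgrade $\overline{C}$ to a global H\"older function, I cover $\Sigma$ by finitely many cylinders $[0;i]$ and repeat the construction on each, obtaining local H\"older extensions $\overline{C}_i$. On any overlap, $\overline{C}_i$ and $\overline{C}_j$ are continuous and both agree with $C$ on a set of full measure (and $\mu$ has full support), so they agree everywhere on the overlap; this gives a single H\"older continuous $\overline{C}$ on $\Sigma$ with $\overline{C}=C$ $\mu$-a.e. Finally, holonomy invariance of $\overline{C}$ on \emph{every} stable and unstable leaf follows by continuity: $\overline{C}$ and $H^*_{xy}\overline{C}(x)$ are continuous functions of $(x,y)$ (as $y$ varies over the common leaf of $x$) that coincide on the dense set where both are equal to $C$ on $\Omega$.

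The main technical obstacle is the good-points argument in the first step: producing the $\sigma$-invariant set $\Omega$ on which the measurable holonomy invariance assumption becomes compatible along $su$- and $us$-paths between typical pairs of points in a product box. This is essentially a Fubini computation, made legitimate by the continuous local product structure of $\mu$ and modeled directly on the proof of Lemma \ref{lem:holonomy_invariant}; once it is in hand, the rest of the proof is a routine density/continuity argument.
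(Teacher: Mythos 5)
Your construction is essentially the paper's: fix a reference point in each cylinder $[0;i]$, define the extension by transporting $C$ of the reference point along an $su$- (or $us$-) holonomy path, use the product structure to see the result agrees with $C$ a.e., and then get holonomy invariance everywhere by density and transverse continuity. (One cosmetic point: the cylinders $[0;i]$ are pairwise disjoint and clopen, so there are no overlaps to reconcile; the extensions simply concatenate.)

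There is, however, one genuine gap in the middle step. You assert that $\overline{C}$ is H\"older continuous on $[0;i]$ ``because the holonomies are uniformly H\"older along leaves and transversely continuous.'' Transverse \emph{continuity} of $x\mapsto H^s_{[x_0,x]x}$ only gives continuity of $\overline{C}$, not H\"older continuity: when $y$ and $y'$ lie on different stable leaves, comparing $H^s_{[x_0,y]y}$ with $H^s_{[x_0,y']y'}$ is exactly a transverse comparison, and no H\"older modulus is assumed for that. What your formula does give for free is that $\overline{C}$ is invariant under (hence uniformly H\"older along) the \emph{stable} foliation, since $[x_0,y]$ depends only on the future of $y$. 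You must then argue separately --- via the a.e.\ agreement with $C$, the a.e.\ unstable-holonomy invariance of $C$, full support, and transverse continuity --- that $\overline{C}$ is invariant under unstable holonomies on \emph{every} unstable leaf (you do this at the very end), which makes it uniformly H\"older along the unstable foliation as well. Joint H\"older continuity then follows not directly but from the regularity theorem for functions that are uniformly H\"older along both transverse foliations, e.g.\ \cite[Thm.~19.1.1]{katok1997introduction}, which is the step the paper invokes and your write-up omits. With the order of the argument rearranged in this way and that theorem cited, your proof is complete.
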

\begin{proof}
We define a new function $\wt{C}$ that will equal $C$ almost everywhere. Let $\omega^1,\ldots, \omega^{\ell}\in \Sigma$ be words such that each $\omega^i_0$ is distinct, and these symbols exhaust those used to define $\Sigma$. Then if $\omega_0=\omega^i_0$, define $\wt{C}(\omega)$ by
\[
\wt{C}(\omega)=H^u_{[\omega,\omega^i]\omega}H^s_{\omega^i[\omega,\omega^i]}C(\omega^i).
\]
By the product structure, we can choose the $\omega^i$ such that $\wt{C}=C$ $\mu$-almost everywhere. From the definition, we see that $\wt{C}$ is uniformly H\"older along every unstable manifold and hence continuous by the transverse continuity of the holonomies. Next, observe that because $\wt{C}=C$ $\mu$-a.e.~that by holonomy invariance along stable manifolds and the continuity of $\wt{C}$, that $\wt{C}$ is invariant under stable holonomies along almost every stable manifold. As $\mu$ has full support, by transverse continuity of the holonomies, this implies that $\wt{C}$ is invariant under stable holonomies along every stable manifold. Thus by \cite[Thm.~19.1.1]{katok1997introduction}, we see that $\wt{C}$ is H\"older continuous so we are done.
\end{proof}

We now prove the measurable rigidity of a conjugacy to a Zimmer block without the assumption that the conjugacy takes values in a Zimmer block.

\begin{proof}[Proof of Theorem \ref{thm:transfer_rigidity}.]
To begin, the cocycle $B$, which takes values in a Zimmer block, preserves a flag $\mc{E}_i^B$, on which it acts isometrically on the intermediate quotients. If we push this flag forward by the conjugacy $C$, we obtain a flag $\mc{E}_i^A$ with the same properties that is invariant under $A$. The proof of Theorem \ref{thm:Zimmer_block_rigidity} shows that $\mc{E}_i^A$ actually coincides with a H\"older continuous $A$-invariant flag with H\"older continuous metrics on the intermediate quotients. Hence with respect to some H\"older continuous framing, we see that $A$ takes values in a Zimmer block. Thus we see that there is some H\"older continuous cocycle $D\colon \Sigma\to U_{\lambda}$ and a H\"older continuous conjugacy $H$ such that $HDH^{-1}=A$. Thus we see that $C(HDH^{-1})C^{-1}=B$. So, we see that $D$, which takes values in the same Zimmer block as $B$, and $B$ are measurably conjugate by the conjugacy $CH$. So, we have reduced to the case of a measurable conjugacy between two cocycles both taking values in Zimmer blocks. 
In fact, by our construction the conjugacy $CH$ also takes values in $U_{\lambda}$ $\mu$-almost everywhere. This is because $C$ carries the flag $\mc{E}_i^B$ to the flag $\mc{E}_i^A$, which is then carried to $U_{\lambda}$'s flag by $H$. The same also holds for the metrics on the intermediate quotients.
Thus we see that the conjugacy $CH$ takes values in a Zimmer block as it carries $U_{\lambda}$'s flag to itself and similarly preserves the metrics for $U_{\lambda}$ $\mu$-almost everywhere. Thus we are done by Proposition \ref{prop:mbl_rigidity_in_a_block}. 
\end{proof}

\bibliographystyle{amsalpha}
\bibliography{biblio.bib}

\end{document}